\documentclass[11pt]{article}
\usepackage [dvips]{graphics}
\usepackage[centertags]{amsmath}
\usepackage{amsfonts}
\usepackage{amssymb}
\usepackage{amsthm}
\usepackage{newlfont}
\usepackage{stmaryrd}

\theoremstyle{plain}
\newtheorem{thm}{Theorem}[section]
\newtheorem{cor}[thm]{Corollary}
\newtheorem{lem}[thm]{Lemma}
\newtheorem{prop}[thm]{Proposition}
\newtheorem{exam}[thm]{Example}
\newtheorem{rem}[thm]{Remark}
\newtheorem{defi}[thm]{Definition}

\oddsidemargin  = 0pt \evensidemargin = 0pt \marginparwidth = 1in
\marginparsep = 0pt \leftmargin     = 1.25in \topmargin =0pt
\headheight     = 0pt \headsep = 0pt \topskip =0pt
\footskip       =0.25in \textheight     = 9in \textwidth      =
6.5in
%
%
\def\sqr#1#2{{\vcenter{\vbox{\hrule height.#2pt
              \hbox{\vrule width.#2pt height#1pt \kern#1pt \vrule
width.#2pt}
              \hrule height.#2pt}}}}
%

%

%
%

%
%

\def\3n{\negthinspace \negthinspace \negthinspace }
\def\2n{\negthinspace \negthinspace }
\def\1n{\negthinspace }

%
%

%
%

%

%

%
\def\no{\noindent}

\def\ms{\medskip}
\def\bs{\bigskip}

%
%
\def\limsup{\mathop{\overline{\mathrm {lim}}}}
\def\liminf{\mathop{\underline{\mathrm {lim}}}}

\def\dim{\hbox{\rm dim$\,$}}

\def\span{\hbox{\rm span$\,$}}

\def\({\Big (}
\def\){\Big )}
\def\[{\Big[}
\def\]{\Big]}

\def\be{\begin{equation}}
\def\bel{\begin{equation}\label}
\def\ee{\end{equation}}
\def\bea{\begin{eqnarray}}
\def\eea{\end{eqnarray}}
\def\bt{\begin{theorem}}
\def\et{\end{theorem}}
\def\bc{\begin{corollary}}
\def\ec{\end{corollary}}
\def\bl{\begin{lemma}}
\def\el{\end{lemma}}
\def\bp{\begin{proposition}}
\def\ep{\end{proposition}}
\def\br{\begin{remark}}
\def\er{\end{remark}}
\def\ba{\begin{array}}
\def\ea{\end{array}}
\def\bd{\begin{definition}}
\def\ed{\end{definition}}

\newcommand{\poly}{\mathbb{C}[z_1,\ldots,z_d]}

\newcommand{\dif}{\mathrm{d}}

\newcommand{\Sing}{\mathrm{Sing}}
\newcommand{\Hardy}{H^2(\mathbb{D}^d)}

\newcommand{\ao}{asymptotically orthogonal }
\newcommand{\range}{\mathrm{ran~}}
\newcommand{\Ind}{\mathrm{Ind~}}
\newcommand{\coker}{\mathrm{coker~}}

\makeatletter
   
   \@addtoreset{equation}{section}
\makeatother

\begin{document}

\title{\bf Polydisc version of Arveson's conjecture}
\author{ Penghui Wang\thanks{Partially supported by NSFC(No. 11471189), E-mail: phwang@sdu.edu.cn }
\quad Chong Zhao \thanks{Partially supported by NSFC(No. 11501329) and Shandong Province Natural Science Foundation ZR2014AQ009, {
E-mail:} { chong.zhao@sdu.edu.cn}. \ms}
 \\ \\
 School of Mathematics, Shandong University\\
Jinan, Shandong 250100, The People's Republic of China\\
}

\maketitle
\begin{abstract}
In the present paper, we solve the polydisc-version of Arveson Conjecture by giving a complete criterion for essential normality  of homogeneous quotient modules of the Hardy module over the polydisc, and it turns  out that our method applies to quotient modules of the weighted Bergman modules $A^2_s(\mathbb D^d)$.
\end{abstract}

\bs

\no{\bf 2000 MSC}. 47A13, 46H25

\bs

\no{\bf Key Words}. Essential normality, Hardy space over the polydisc, quotient module
\section{Introduction}

~~~~Throughout this paper, $\mathbb{D}$ denotes  the open unit disc of the complex plane. The Hardy space $H^2(\mathbb{D}^d)$ over the polydisc is defined as the Hilbert space of analytic functions over $\mathbb{D}^d$ satisfying
$$||f||^2=\sup_{0<r<1}\int_{\mathbb{T}^d}|f(rz)|^2\dif m(z)<\infty,$$
where $\dif m$ is the normalized Haar measure on $\mathbb{T}^d$. $\Hardy$ can be viewed  as a Hilbert
module \cite{CG,DP} in the sense that $\Hardy$ as a Hilbert space admits a natural module structure over
the polynomial ring $\poly$  with respect to the obvious multiplication action.

A closed subspace $\mathcal{M}$ of $\Hardy$ that is invariant under multiplication by polynomials is called a submodule, and $\mathcal{N}=\Hardy\ominus\mathcal{M}$ which is invariant under the adjoint module actions is called a quotient module, whose module structure is given by
$$
 f\cdot g=S_f g, \quad f\in \poly,\, g\in \cal N,
$$
 where $S_{f}=P_{\mathcal{N}}M_f\mid_{\mathcal{N}}$. The closure of an ideal $I\subset\poly$ is a submodule, which is called the submodule generated by $I$ and denoted by $[I]$. For convenience we denote by $[I]^\bot$ the quotient module associate to the ideal $I$. If all the commutators $[S_{z_i}^*,S_{z_j}],1\leq i,j\leq d$ belong to the compact operator algebra $\mathcal{K}$, then $\mathcal{N}$ is said to be essentially normal.

The present paper is a continuation of \cite{WZ}, aiming at giving a complete answer to the polydisc version of Arveson's conjecture.
In \cite{Ar1}, Arveson conjectured that homogenous submodules of the $d$-shift module over the unit ball are essentially normal, and many efforts have been made along this line, such as \cite{Ar1,Ar3,Dou1,Dou2,DTY,DW,Eng,FX,Guo,GWk1,KS} and references therein.

The essential normality of quotient modules of canonical analytic function Hilbert modules over the polydiscs was initiated  in \cite{DM}, where some special modules such as $[(z-w)^2]^\perp$, $[z^i-w^j]^\perp$ in $H^2(\mathbb D^2)$ were considered. Clark\cite{Cla} proved that $[B_1(z_1)-B_2(z_2),\cdots,B_{d-1}(z_{d-1})-B_d(z_d)]^\perp$ can be identified with a kind of Bergman space over the associated varieties, where $B_i(z_i)$ are finite Blaschke products, and hence they are essentially normal. For recent development on essential normality over polydiscs, refer to \cite{GWk3,GWp1,GWp2,GWZ,KGS,Wa} and references therein.

Compared to the existing results on Arveson's conjecture over the unit ball, the situations over the polydisc are totally different. Briefly speaking, over the unit ball all the submodules generated by polynomials and their associated quotient modules are believed to be essentially normal, while over polydiscs no non-trivial submodule and few quotient modules are essentially normal.

For an ideal $I\subset \mathbb C[z_1,\cdots,z_d]$, denote by $Z(I)$ the zero variety of $I$, and $Z_{\mathbb D^d}(I)=Z(I)\cap \mathbb D^d$.
If $\dim_{\mathbb{C}}Z(I)=0$,  the quotient module $[I]^\perp$ is of finite dimension, on which the essential normality is trivial. Therefore in what follows, we always assume $\dim_{\mathbb{C}}Z(I)\geq 1$. It is mentioned in \cite{WZ} and will be proved in Section 2 that, if the homogenous quotient module $[I]^\perp$ is essentially normal, then $\dim_{\mathbb{C}}Z(I)=1$. Therefore by homogeneity, $Z_{\mathbb D^d}(I)=\bigcup_i V_i$ for several different discs $V_i$. Taking $u_i\in \partial V_i$ and we can write
$V_i=\{\lambda u_i:\lambda\in\mathbb{D}\}.$ To continue, we set
$$
\Lambda_i=\{i_j:\, |u_{i,i_j}|=1\}\subset\{1,2,\cdots,d\}.
$$
Obviously, $\Lambda_i$ depends only on $V_i$. In what follows, for $u\in \partial \mathbb D^d$ and $V_u=\{\lambda u: \lambda\in\mathbb D\}$, we will write $\Lambda=\{i_j:\, |u_{i_j}|=1\}$ if there is no ambiguity, and $u_\Lambda=(u_{i_1},\cdots,u_{i_k})$. Next, we introduce the following condition on the variety of $I$.
\vskip2mm
\noindent{\bf Condition A.} {\it Let $I$ be a homogenous ideal, $Z_{\mathbb D^d}(I)=\bigcup\limits_{i=1}^n V_i$, where $\{V_i\}$ are different discs. We say that $I$(or the zero variety $Z_{\mathbb D^d}(I)$) satisfies Condition A, if one of the following items holds
\begin{itemize}
\item[1)] $\Lambda_i\not=\Lambda_j$ for $i\not=j$;
\item[2)] For any pair $(i,j)$ such that $\Lambda_i=\Lambda_j$, let $\Lambda_i=\{i_1,\cdots, i_k\}$, then $(u_{i,i_1},\cdots,u_{i,i_k})$ and $(u_{j,i_1},\cdots,u_{j,i_k})$ are linearly independent.
\end{itemize}}
To state our main result, we need the following
\begin{defi}\label{def:basictype}
Let $u\in\partial\mathbb D^d$, and $I$ be a homogenous ideal such that $Z_{\mathbb{D}^d}(I)=V_u$. Denote by $J_u$ the ideal of $\poly$ generated by $\{\bar{u}_iz_i-\bar{u}_jz_j:i,j\in\Lambda\}$, and $I'$ the ideal generated by $I$ and $J_u$.
\begin{itemize}
\item[1)] If $I'=\sqrt{I}$ the prime ideal associated to $V_u$, then $I$ is called quasi-prime.
\item[2)] If $\sqrt{I}/I'$ is of finite dimension, then $I$ is called essentially quasi-prime.
 \end{itemize}The quotient module $[I]^\bot$ is called (essentially) quasi-prime provided $I$ is (essentially) quasi-prime.
\end{defi}
We can state our main result, which is a complete criterion for essential normality of homogeneous quotient modules of $\Hardy$.
\begin{thm}\label{thm:iff}
    Let $I\subset\poly$ be a homogeneous ideal for which $[I]^\bot$ is of infinite dimension. Then the quotient module $[I]^\bot$ of $\Hardy$ is essentially normal if and only if the following items hold,
    \begin{itemize}
     \item[1)] $Z_{\mathbb{D}^d}(I)$ satisfies Condition A, hence $Z_{\mathbb D^d}(I)=\bigcup\limits_i V_{u_i}$ for several different discs $V_{u_i}$,
     \item[2)] Let $I=\bigcap_{j=1}^m I_{u_j}$ be the primary decomposition with $Z_{\mathbb D^d}(I_{u_j})=V_{u_j}$, then each $I_{u_j}$ is essentially quasi-prime.
     \end{itemize}
\end{thm}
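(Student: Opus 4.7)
The plan is to establish both directions by reducing to the single-variety case via primary decomposition, and then analyzing each primary factor through the geometry of its associated disc $V_u$.

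For the ``if'' direction, start from the primary decomposition $I=\bigcap_{j=1}^m I_{u_j}$. Since the ideals are homogeneous, $[I]^\bot$ coincides with the closed linear span of $\sum_j [I_{u_j}]^\bot$ by a degree-by-degree argument in $\poly$. Under Condition~A, I would show that the family $\{[I_{u_j}]^\bot\}$ is mutually \ao in the sense of \cite{WZ}: for case~(1) the distinguished-boundary supports on $\mathbb{T}^d$ are disjoint, while in case~(2) linear independence of $u_i|_\Lambda$ and $u_j|_\Lambda$ produces phase-cancellation of the angular variables at high degree. Standard asymptotic-orthogonality techniques then reduce essential normality of $[I]^\bot$ to essential normality of each $[I_{u_j}]^\bot$. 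For a single disc, one identifies the ``prime'' quotient $[\sqrt{I_u}]^\bot$ with a Hardy-type space over $V_u\cong\mathbb{D}$, which is essentially normal since $V_u$ is one-dimensional. The essentially quasi-prime condition $\dim(\sqrt{I_u}/I'_u)<\infty$ then forces $[I_u]^\bot$ to differ from $[\sqrt{I_u}]^\bot$ only by a finite-rank correction to the commutators, because the generators $\bar{u}_iz_i-\bar{u}_jz_j$ of $J_u$ already act essentially as a scalar multiplication on the Hardy space of $V_u$.

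For the ``only if'' direction, suppose $[I]^\bot$ is essentially normal. Section~2 gives $\dim Z(I)=1$, so we may write $Z_{\mathbb{D}^d}(I)=\bigcup_i V_{u_i}$. If Condition~A fails at some pair $(i,j)$, I would construct a sequence of unit polynomials supported asymptotically on both $V_{u_i}$ and $V_{u_j}$ whose mutual inner products remain bounded below; these produce a commutator $[S_{z_k}^*,S_{z_\ell}]$ that fails to be compact on $[I]^\bot$. If some $I_{u_j}$ is not essentially quasi-prime, then $\sqrt{I_{u_j}}/I'_{u_j}$ contains infinitely many linearly independent homogeneous representatives; these lift to an orthogonal sequence in $[I_{u_j}]^\bot$ along which some commutator stays bounded away from zero, again contradicting essential normality.

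The hard part is the last step: extracting a genuine operator-theoretic obstruction from the algebraic failure of essential quasi-primality. One must bridge $\sqrt{I_{u}}/I'_u$ being infinite-dimensional with the existence of a non-compact commutator on $[I_u]^\bot$, which requires precise norm estimates on the Hardy-space representatives of homogeneous polynomials restricted to $V_u$, together with careful control over how $S_{z_i}$ interacts with the $J_u$-relations inside $\sqrt{I_u}$. The corresponding sufficiency estimate — showing that a finite-dimensional $\sqrt{I_u}/I'_u$ contributes only a compact perturbation — is also nontrivial and hinges on an explicit understanding of how polynomial multiplication on the prime quotient interacts with the generators of $J_u$.
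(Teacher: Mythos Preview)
Your high-level architecture---primary decomposition, asymptotic orthogonality of the pieces, and reduction to a single disc $V_u$---matches the paper. But the mechanism you propose for the single-variety sufficiency step is wrong, and this is the heart of the argument.

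You claim that essential quasi-primality ``forces $[I_u]^\bot$ to differ from $[\sqrt{I_u}]^\bot$ only by a finite-rank correction to the commutators.'' This is false. Take $u\in\mathbb{T}^d$ and $I=J_u^n$: then $I'=\sqrt{I}=J_u$, so $I$ is quasi-prime, yet $[I]^\bot\ominus[\sqrt{I}]^\bot$ is infinite-dimensional and there is no finite-rank relationship between the commutators on the two spaces. The condition $\dim(\sqrt{I_u}/I_u')<\infty$ says nothing about $[I_u]^\bot$ versus $[\sqrt{I_u}]^\bot$; it controls $[I_u']^\bot$ versus $[\sqrt{I_u}]^\bot$, and $[I_u]^\bot\ominus[I_u']^\bot$ may still be huge.

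The paper's route is different. One first proves, \emph{without} any quasi-primality assumption, that $S_h^*$ is essentially unitary on $[I_u]^\bot$, where $h=|\Lambda|^{-1}\sum_{i\in\Lambda}\bar u_i z_i$. This comes from a decay estimate: since $(z_j-u_jh)^N\in I$ and $|u_j|<1$ for $j\in\Lambda^c$, one gets $\|S_{z_j}^n\|\to0$ geometrically, so truncation in the $\Lambda^c$-variables converges in norm to the identity, which transfers the essential isometry of $M_h^*$ on the distinguished factor to all of $[I_u]^\bot$. Only \emph{then} does essential quasi-primality enter: for $j\in\Lambda^c$ one has $h_jh^N\in I_u'$ for large $N$, hence $h_jh^N-f_j\in I_u$ for some $f_j\in J_u$, giving $S_{h_j}^*S_h^{*N}=S_{f_j}^*$, which is compact; essential unitarity of $S_h$ then forces $S_{h_j}^*$ itself to be compact. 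This yields $S_{z_i}=u_iS_h+(\text{compact})$ for every $i$, and essential normality follows. The asymptotic orthogonality under Condition~A is then deduced from this essentially-unitary structure, not from disjoint supports or phase cancellation as you suggest.

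For necessity your sketch is honest about the gap. The paper's device is concrete: if $[I]^\bot$ is essentially normal then $S_f^*P_{[I_{u_k}]^\bot}$ is compact for every $f\in\sqrt{I_{u_k}}$, by iterating $[S_f^*,S_f]\in\mathcal K$. On the other hand, an explicit series computation (expanding $[I_2]^\bot$ in powers of $z_{j_0}$ for $j_0\in\Lambda^c$, where $I_2$ is generated by $I'$ and $\{h_ih_j\}$) shows that $\sum_{j\in\Lambda^c}S_{h_j}S_{h_j}^*$ is essentially bounded below by $\min_j(1-|u_j|^2)^2$ on $[I_2]^\bot\ominus[\sqrt{I}]^\bot$. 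When $I$ is not essentially quasi-prime, one manufactures an infinite-dimensional supply of vectors in this space by applying suitable $S_{g_0}^*$ to elements of $[I']^\bot\ominus[\sqrt{I}]^\bot$, and the two facts collide. Your idea of ``lifting homogeneous representatives of $\sqrt{I_u}/I_u'$ to an orthogonal sequence'' is in the right spirit, but the actual obstruction lives one level down, on $[I_2]^\bot$, and requires the precise lower bound rather than a qualitative non-compactness statement.
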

The criterion  in Theorem \ref{thm:iff}  is purely algebraic, which does not depend on the structure of Hardy module $\Hardy$. Notice that in the case $d=2$, Theorem \ref{thm:iff} reduces to \cite[Theorem 1.1]{GWp1}. In Section 4, we will give two surprising examples, which show that the essential normality in higher dimensional case is more interesting than the $2$-dimensional case.

\begin{rem}
\begin{itemize}
    \item[1)] Theorem \ref{thm:iff} suggests that the essential normality of quotient modules is closely  related to the distinguished variety\cite{AM}.
    \item[2)] The method in \cite{GWp1}, where d=2 is assumed, relies heavily on the fact that $M_{z_i}$ is isometric, and can not be applied to the weighted Bergman space.

\end{itemize}
\end{rem}

In \cite[Remark 2.15]{WZ}, we can only deal with the essential normality of quotient modules of the weighted Bergman module $A_s^2(\mathbb D^d)$ in some simplest cases, where $ A_s^2(\mathbb D^d)=A_s^2(\mathbb D)\otimes\cdots\otimes A_s^2(\mathbb D)$ and $A_s^2(\mathbb D)$ is the space of analytic functions $f$ on $\mathbb D$ such that
        $$
       \|f\|_s^2=\int_{\mathbb D}|f(z)|^2(1-|z|^2)^s\mathrm{d}A(z)<\infty,
        $$
    where $s>0$ and $\mathrm{d}A$ is the normalized area measure on $\mathbb D$. It can be verified  that  all the proofs in the present paper are valid to the weighted Bergman module case. We have
\begin{thm}\label{thm:iff-wbs}
For a homogenous ideal $I$ of $\poly$, the quotient module $[I]^\perp$ of $A_s^2(\mathbb D^d)$ is essentially normal if and only if 1)-2) in Theorem \ref{thm:iff} are satisfied.
\end{thm}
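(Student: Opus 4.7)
The plan is to follow the proofs of Theorem \ref{thm:iff} step-by-step, replacing Hardy space constructions by their weighted Bergman analogues. Since Condition A and essential quasi-primeness are purely algebraic conditions on $I$, they transfer to the Bergman setting unchanged; the task is only to verify that the functional-analytic ingredients of the Hardy argument still go through.

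For necessity, I would examine sequences of homogeneous polynomials in $[I]^\bot$ that asymptotically concentrate on the individual boundary discs $V_{u_j}$. The compactness of the commutators $[S_{z_i}^*,S_{z_j}]$ forces algebraic relations between the various "slice" operators, which translated into algebraic language give Condition A and the essentially quasi-prime requirement. The argument transfers verbatim once one observes that the weighted Bergman norms of homogeneous polynomials $z^\alpha$ factor as $\prod_i\|z_i^{\alpha_i}\|_s^2$, and these differ from the Hardy norms only by a polynomial-in-$\alpha$ factor whose contribution cancels in every ratio that appears in the asymptotic estimates.

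For sufficiency, I would use the tensor product decomposition $A_s^2(\mathbb{D}^d)=A_s^2(\mathbb{D})^{\otimes d}$ to mirror the structural arguments made on $\Hardy$. The primary decomposition $I=\bigcap_j I_{u_j}$ yields a sum decomposition $[I]^\bot=\sum_j [I_{u_j}]^\bot$ which, under Condition A, is asymptotically orthogonal: the inner products between normalized homogeneous vectors attached to different $V_{u_j}$ tend to zero because the corresponding kernel sections peak at different boundary points of $\partial\mathbb{D}^d$. For each essentially quasi-prime component, the finite-dimensionality of $\sqrt{I_{u_j}}/I_{u_j}'$ lets one reduce, modulo a finite-rank perturbation, to the quasi-prime case; in the quasi-prime case one identifies $[I]^\bot$ with a concrete one-variable model attached to $V_u$ and computes the commutators directly, showing they are compact.

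The main obstacle is that in the Hardy case every $M_{z_i}$ is an isometry, which yields $\|S_{z_i}^*\|\le 1$ and produces clean cancellations in norm estimates on the quotient. On $A_s^2(\mathbb{D})$ the operator $M_z$ is only a strict contraction, with $I-M_z^*M_z$ a nonzero positive operator. Wherever the Hardy argument used orthonormality of the monomials or the isometric property to extract exact cancellations, one must substitute a weighted-norm estimate controlled by the asymptotic behaviour of the weighted Bergman kernel. Fortunately the factored form $\|z^\alpha\|_s^2=\prod_i\|z_i^{\alpha_i}\|_s^2$ and the known polynomial asymptotics $\|z_i^{\alpha_i}\|_s^2\sim c\,\alpha_i^{-(s+2)}$ mean that the ratios driving the asymptotic orthogonality and the commutator compactness estimates behave, up to controllable errors, exactly as in the Hardy case, so the entire scheme closes.
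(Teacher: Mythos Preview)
Your overall plan—rerun the Hardy-space proof of Theorem \ref{thm:iff} and check that each step survives the passage to $A_s^2(\mathbb D^d)$—is exactly what the paper does: it offers no separate argument for Theorem \ref{thm:iff-wbs} beyond the sentence ``It can be verified that all the proofs in the present paper are valid to the weighted Bergman module case.''

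Where you diverge from the paper is in your diagnosis of ``the main obstacle.'' You flag the loss of the isometric property of $M_{z_i}$ and propose to repair the resulting damage with weighted-norm asymptotics $\|z^n\|_s^2\sim c\,n^{-(s+1)}$. But the paper's point (Remark 1.3(2)) is precisely that its method, unlike the earlier $d=2$ argument of \cite{GWp1}, was built so as \emph{not} to depend on $M_{z_i}$ being isometric; that is the reason the extension to $A_s^2(\mathbb D^d)$ is possible at all. Concretely, the proofs in Sections 2--4 use only: the reproducing-kernel/Berezin machinery (Lemmas \ref{lem:Berezin}--\ref{lem:counterexample}), the tensor-product expansion, the multiplier bound $\|M_f\|=\|f\|_\infty$, and repeated reductions to essential unitarity/Fredholmness (Lemmas \ref{lem:closedrange}, \ref{lem:basic_type}, \ref{lem:bddbelow2}). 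The few spots where exact isometry is implicitly invoked—for instance the identity behind $\|S_{z_j}^{*n}f\|^2=\|\sum_{\alpha_j\ge n}z_{\Lambda^c}^\alpha f_\alpha\|^2$ in Lemma \ref{lem:basic_type}, or the explicit series evaluations in Lemma \ref{lem:bddbelow2}—only need that $I-M_z^*M_z$ is compact on $A_s^2(\mathbb D)$, i.e.\ that $M_z$ is \emph{essentially} isometric. With that single observation every limit and every ``essentially bounded below'' conclusion goes through with at most a change of constant; your full asymptotic scheme, while correct, is more apparatus than the situation requires.
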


It is worth to point out that Theorem \ref{thm:iff-wbs} is the first nontrivial  result on essential normality of quotient modules of $A_s^2(\mathbb D^d)$.

\vskip2mm

The present paper is organized as following. in section 2 we give a geometric characterization of $Z(I)$ for essential normality of $[I]^\perp$. In Section 3 we construct a large class of essentially normal quotient modules of $\Hardy$. In Section 4 the complete criterion for essentially normal homogeneous quotient modules is obtained, and Section 5 contains some discussion on the K-homology.
\newpage
\section{The variety of an essentially normal quotient module}
We begin the present section with some preliminaries. Recall that $H^2(\mathbb{D}^d)$ is the Hilbert space with reproducing kernel
$$K_\lambda(z)=\prod_{i=1}^d(1-\bar{\lambda}_iz_i)^{-1},~\forall z\in\mathbb{D}^d$$
at $\lambda\in\mathbb{D}^d$. Obviously $||K_\lambda||^2=\prod_{i=1}^d(1-|\lambda_i|^2)^{-1}$ diverges to infinity as $\lambda$ approaches $\partial\mathbb{D}^d$.

For each $A\in B(H^2(\mathbb{D}^d))$ the function
$$\tilde{A}(z)=\langle Ak_z,k_z\rangle,z\in\mathbb{D}^d$$
is called the Berezin transform of $A$, where $k_z=||K_z||^{-1}K_z$ is the normalized reproducing kernel at $z$. It is routine to compute for $z,\lambda\in\mathbb{D}^d$ that
$$\langle k_z,K_\lambda\rangle=k_z(\lambda)=\frac{1}{||K_z||}\prod_{i=1}^d\frac{1}{1-\bar{\lambda}_iz_i},$$
which converges to $0$ as $z$ approaches $\partial\mathbb{D}^d$. Since linear combinations of $\{K_\lambda: \lambda\in\mathbb{D}^d\}$ are dense in $H^2(\mathbb{D}^d)$, $k_z$ converges to $0$ in the weak topology as $z$ approaches $\partial\mathbb{D}^d$. If $A$ is compact then as $z$ approaches $\partial\mathbb{D}^d$, $Ak_z$ converges to $0$ in norm and then $\tilde{A}(z)$ converges to $0$. Then we have the following well-known lemma.
\begin{lem}\label{lem:Berezin}
    It holds for each compact operator $A\in B(H^2(\mathbb{D}^d))$ that $\lim_{z\to\partial\mathbb{D}^d}\tilde{A}(z)=0$.
\end{lem}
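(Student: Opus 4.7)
The plan is to follow the outline already sketched in the paragraph preceding the lemma, making each step precise.

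First, I would establish that $k_z \to 0$ weakly as $z \to \partial \mathbb{D}^d$. By the reproducing property, for any $\lambda \in \mathbb{D}^d$,
$$\langle k_z, K_\lambda \rangle = \frac{1}{\|K_z\|} \prod_{i=1}^d \frac{1}{1 - \bar\lambda_i z_i} = \prod_{i=1}^d \frac{(1-|z_i|^2)^{1/2}}{1 - \bar\lambda_i z_i}.$$
Since $\lambda \in \mathbb{D}^d$ is fixed, $|1 - \bar\lambda_i z_i| \geq 1 - |\lambda_i| > 0$ uniformly in $z$, so the expression tends to $0$ as some coordinate $|z_i| \to 1$, i.e., as $z \to \partial \mathbb{D}^d$. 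Because $\{K_\lambda : \lambda \in \mathbb{D}^d\}$ has dense linear span in $\Hardy$ and $\|k_z\| = 1$ is uniformly bounded, this pointwise convergence on a dense set extends to weak convergence $k_z \rightharpoonup 0$.

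Next, since $A$ is compact, it maps weakly null nets to norm null nets, giving $\|A k_z\| \to 0$ as $z \to \partial \mathbb{D}^d$. By Cauchy--Schwarz,
$$|\tilde A(z)| = |\langle A k_z, k_z \rangle| \leq \|A k_z\| \cdot \|k_z\| = \|A k_z\| \longrightarrow 0,$$
which is the desired conclusion.

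The only mild subtlety is confirming the weak-to-norm mapping property in the net sense (rather than for sequences), but this is standard: one can either reduce to sequences via the first-countable local behavior of $z \to \partial \mathbb{D}^d$, or argue directly by expanding $A$ as a norm-limit of finite-rank operators $A_n = \sum_{k=1}^{N_n} \langle \cdot, e_k \rangle f_k$, for which $A_n k_z \to 0$ in norm follows from the weak convergence established above, and then use a standard $\varepsilon/2$ argument with $\|A - A_n\| < \varepsilon$. No step presents a genuine obstacle; this lemma is a routine application of the reproducing kernel formalism together with the compactness hypothesis.
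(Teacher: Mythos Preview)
Your proposal is correct and follows essentially the same argument as the paper: the paper's proof (given in the paragraph immediately preceding the lemma) establishes weak convergence $k_z\rightharpoonup 0$ via the explicit formula for $\langle k_z,K_\lambda\rangle$ and density of the kernels, then uses compactness of $A$ to get $\|Ak_z\|\to 0$ and hence $\tilde A(z)\to 0$. Your write-up adds only minor elaborations (the Cauchy--Schwarz step and the net-versus-sequence remark), but the route is identical.
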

By an application of Berezin transform, we get the following result.
\begin{lem}\label{lem:counterexample}
    Suppose $I\subset\poly$ is an ideal. Suppose $u,v\in\partial\mathbb{D}^d$ are two different accumulation points of $Z_{\mathbb D^d}(I)$, and there is a subset $\Lambda\subset\{1,\ldots,d\}$ such that
        $$u_i=v_i\in\mathbb{T},\forall i\in\Lambda,$$
    and $u_i,v_i\in\mathbb{D}$ whenever $i\in\Lambda^c$, then $[I]^\bot$ is not essentially normal.
\end{lem}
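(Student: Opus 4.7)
The plan is to argue by contradiction, assuming $[I]^\bot$ is essentially normal so that each self-commutator $[S_{z_i}^*,S_{z_i}]$ is compact on $N:=[I]^\bot$. The idea is to extract a quantitative approximate-eigenvector property for the $S_{z_i}$'s from the Berezin transform and then test it against normalized reproducing kernels at $u$ and $v$ simultaneously. For $\lambda\in Z_{\mathbb{D}^d}(I)$ the kernel $K_\lambda$ lies in $N$, and since $N$ is invariant under $M_{z_i}^*$ one has $S_{z_i}^*k_\lambda=\bar\lambda_i k_\lambda$ for $k_\lambda=K_\lambda/\|K_\lambda\|$. Expanding and using $\langle S_{z_i}k_\lambda,k_\lambda\rangle=\lambda_i$ yields
$$\|S_{z_i}k_\lambda-\lambda_i k_\lambda\|^2=\|S_{z_i}k_\lambda\|^2-|\lambda_i|^2=\widetilde{[S_{z_i}^*,S_{z_i}]}(\lambda),$$
which by Lemma~\ref{lem:Berezin} tends to zero as $\lambda\to\partial\mathbb{D}^d$ along $Z_{\mathbb{D}^d}(I)$.

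For any auxiliary point $\mu\in Z_{\mathbb{D}^d}(I)$ the relation $\langle S_{z_i}k_\lambda,k_\mu\rangle=\mu_i\langle k_\lambda,k_\mu\rangle$ then gives
$$|(\mu_i-\lambda_i)\langle k_\lambda,k_\mu\rangle|=|\langle S_{z_i}k_\lambda-\lambda_i k_\lambda,k_\mu\rangle|\leq\|S_{z_i}k_\lambda-\lambda_i k_\lambda\|,$$
so under the standing assumption the left-hand side vanishes in the limit. The strategy is now clear: I would choose sequences $\lambda^{(n)}\to u$ and $\mu^{(n)}\to v$ inside $Z_{\mathbb{D}^d}(I)$ whose inner products $|\langle k_{\lambda^{(n)}},k_{\mu^{(n)}}\rangle|$ are bounded below by some $c_0>0$. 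Because $u\neq v$ while $u_\Lambda=v_\Lambda$, there exists $i_0\in\Lambda^c$ with $u_{i_0}\neq v_{i_0}$, and then $\mu_{i_0}^{(n)}-\lambda_{i_0}^{(n)}\to v_{i_0}-u_{i_0}\neq 0$, so the displayed inequality forces $c_0=0$, a contradiction.

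The main obstacle is this construction of good sequences. The factorisation
$$|\langle k_\lambda,k_\mu\rangle|^2=\prod_{i=1}^d\frac{(1-|\lambda_i|^2)(1-|\mu_i|^2)}{|1-\bar\lambda_i\mu_i|^2}=\prod_{i=1}^d\bigl(1-\rho(\lambda_i,\mu_i)^2\bigr)$$
shows that the factors for $i\in\Lambda^c$ converge automatically to strictly positive limits, since $u_i,v_i\in\mathbb{D}$. For $i\in\Lambda$ the factor is delicate because $u_i=v_i\in\mathbb{T}$, but the equality $u_\Lambda=v_\Lambda$ is exactly what is needed to select the sequences so that $\lambda_\Lambda^{(n)}$ and $\mu_\Lambda^{(n)}$ approach the common point $u_\Lambda$ along a matched essentially-radial direction, keeping $\rho(\lambda_i^{(n)},\mu_i^{(n)})$ bounded away from $1$. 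Making this rigorous requires a local analysis of $Z_{\mathbb{D}^d}(I)$ near $u$ and $v$: one picks small neighbourhoods of $u$ and $v$ on the variety and uses that both accumulation points share their $\Lambda$-coordinates to match the $\Lambda$-components of $\lambda^{(n)}$ and $\mu^{(n)}$ up to a controlled pseudohyperbolic error. This is the only step in the argument where the specific geometric hypothesis of the lemma (the combination $u_\Lambda=v_\Lambda\in\mathbb{T}^{|\Lambda|}$ with $u_{\Lambda^c},v_{\Lambda^c}\in\mathbb{D}^{|\Lambda^c|}$) is used in an essential way.
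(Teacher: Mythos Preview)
Your strategy coincides with the paper's: test a self-commutator against normalized reproducing kernels along sequences in $Z_{\mathbb{D}^d}(I)$ approaching $u$ and $v$, and exploit that $|\langle k_{\lambda^{(n)}},k_{\mu^{(n)}}\rangle|$ stays bounded below. The paper packages this slightly differently---it picks a polynomial $h$ with $h(u)=0$, $h(v)\neq0$ and bounds $\widetilde{[S_h^*,S_h]}(u^{(n)})$ from below by $|h(v^{(n)})\,\langle k_{u^{(n)}},k_{v^{(n)}}\rangle|^2$---but this is equivalent to your coordinate-function version, and the whole argument hinges on the same inner-product lower bound.

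The step you flag as the ``main obstacle'' (arranging $\rho(\lambda_i^{(n)},\mu_i^{(n)})$ bounded away from $1$ for $i\in\Lambda$) is precisely the point the paper's own proof glosses over: there the sequences are introduced without constraint, yet the asserted limit $\prod_{i\in\Lambda^c}(\cdots)$ tacitly requires every $\Lambda$-factor to tend to $1$, which fails for generic choices. The fix actually needed is far simpler than the ``local analysis of the variety'' you propose. Since $Z(I)$ is Zariski-closed, the accumulation points $u,v$ automatically lie in $Z(I)$; and in every use of this lemma in the paper $I$ is homogeneous, so one may take the \emph{same} radial parameter for both points, $\lambda^{(n)}=t_n u$ and $\mu^{(n)}=t_n v$ with $t_n\uparrow 1$. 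Then for each $i\in\Lambda$ one has $\lambda_i^{(n)}=t_n u_i=t_n v_i=\mu_i^{(n)}$, those factors equal $1$ identically, and
\[
|\langle k_{\lambda^{(n)}},k_{\mu^{(n)}}\rangle|\ \longrightarrow\ \prod_{i\in\Lambda^c}\frac{\sqrt{(1-|u_i|^2)(1-|v_i|^2)}}{|1-\bar u_i v_i|}\ >\ 0.
\]
Without homogeneity the lemma as stated would need a genuinely different construction, and neither your sketch nor the paper's proof supplies one.
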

\begin{proof}
    Since $u\neq v$, there is a polynomial $h\in\poly$ such that $h(u)=0$ and $h(v)\neq0$. Let $\{u^{(n)}\},\{v^{(n)}\}$ be sequences in $Z_{\mathbb D^d}(I)$ with limits $u,v$ respectively. As $n\to\infty$, both $k_{u^{(n)}}$ and $k_{v^{(n)}}$ converge to $0$ in the weak topology. On the other hand,
    \begin{eqnarray*}
        &&\limsup_{n\to\infty}\langle[S_h^*,S_h]k_{u^{(n)}},k_{u^{(n)}}\rangle\\
        &=&\limsup_{n\to\infty}\langle S_hk_{u^{(n)}},S_hk_{u^{(n)}}\rangle-\lim_{n\to\infty}\langle S_h^*k_{u^{(n)}},S_h^*k_{u^{(n)}}\rangle\\
        &\geq&\limsup_{n\to\infty}|\langle S_hk_{u^{(n)}},k_{v^{(n)}}\rangle|^2-\lim_{n\to\infty}|h(u^{(n)})|^2\\
        &=&\lim_{n\to\infty}|h(v^{(n)})\langle k_{u^{(n)}},k_{v^{(n)}}\rangle|^2-|h(u)|^2\\
        &=&|h(v)|^2\lim_{n\to\infty}|\langle k_{u^{(n)}},k_{v^{(n)}}\rangle|^2\\
        &=&|h(v)|^2\prod_{i\in\Lambda^c}\frac{\sqrt{(1-|u_i|^2)(1-|v_i|^2)}}{1-\bar{u}_iv_i}\\
        &>&0.
    \end{eqnarray*}
    Therefore $\{[S_h^*,S_h]k_{u^{(n)}}\}$ does not converge to $0$ as $n\to\infty$, and $[S_h^*,S_h]$ cannot be compact by Lemma \ref{lem:Berezin}.
\end{proof}
\begin{exam}
    In the case $d=3$, let $I\subseteq C[z_1,z_2]$ be a homogenous ideal such that $Z(I)\cap\partial\mathbb D^2\subset \mathbb T^2$, and $p(z)=(\alpha_1 z_1-z_3)(\alpha_2 z_2-z_3)$ with $\alpha_1\not=\alpha_2$, $|\alpha_i|<1$. Then the quotient module $[I,p]^\perp$ of $H^2(\mathbb{D}^3)$ cannot be essentially normal according to Lemma \ref{lem:counterexample}.
\end{exam}
The following lemma and its idea of proof originate from \cite{WZ}, and we perform the improved proof here. It characterizes the zero varieties of homogeneous ideals $I$ for witch $[I]^\bot$ is essentially normal.
\begin{lem}\label{prop:varietydim}
    Let $I\subset\poly$ be a homogeneous ideal such that $\dim_{\mathbb{C}}Z(I)\geq2$, then $\mathcal{N}=[I]^\bot$ is not essentially normal.
\end{lem}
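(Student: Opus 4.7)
The plan is to reduce the statement to Lemma \ref{lem:counterexample} by producing two distinct accumulation points $u,v \in \partial \mathbb{D}^d$ of $Z_{\mathbb{D}^d}(I)$ together with a common subset $\Lambda \subseteq \{1,\ldots,d\}$ such that $u_i = v_i \in \mathbb{T}$ for $i \in \Lambda$ and $u_i, v_i \in \mathbb{D}$ for $i \in \Lambda^c$. Since $\dim_{\mathbb{C}} Z(I) \geq 2$, I would first pass to an irreducible component $W$ of $Z(I)$ with $\dim_{\mathbb{C}} W \geq 2$; by homogeneity of $I$ this $W$ is a cone. For each $\xi \in W \setminus \{0\}$ set $\Lambda(\xi) := \{i : |\xi_i| = \max_j |\xi_j|\}$; for any $i_0 \in \Lambda(\xi)$ the rescaled point $\xi/\xi_{i_0}$ lies in $\partial \mathbb{D}^d$, is an accumulation point of $W \cap \mathbb{D}^d \subseteq Z_{\mathbb{D}^d}(I)$ via the rays $\lambda \xi/\xi_{i_0}$ as $|\lambda| \uparrow 1$, and has $i_0$-th coordinate equal to $1$. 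So the task is to locate two such $\xi, \eta$ whose rescalings agree on a common maximizing set.

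The central algebraic input is a holomorphic-rigidity observation: if $W \subseteq \{|z_i| = |z_j|\}$ for a pair $i \neq j$, then on the Zariski-open subset $W \setminus \{z_j = 0\}$ of the irreducible variety $W$ the function $z_i/z_j$ is holomorphic with values in $\mathbb{T}$, hence locally constant by the maximum modulus principle and globally constant by connectedness; therefore $z_i = c_{ij} z_j$ on $W$ for some $c_{ij} \in \mathbb{T}$. Introduce the equivalence relation $i \sim j$ on $\{1,\ldots,d\}$ defined by $W \subseteq \{z_i = c z_j\}$ for some $c \in \mathbb{T}$, let $C_1,\ldots,C_m$ be its classes, and choose representatives $i_k \in C_k$. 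Each class yields linear relations on $W$, so $W$ is contained in a linear subspace of complex dimension $m$, and therefore $m \geq \dim_{\mathbb{C}} W \geq 2$.

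Next I would locate an index $k$ with $W_{C_k} := \{\xi \in W \setminus \{0\} : \Lambda(\xi) = C_k\}$ non-empty. For every pair with $i \not\sim j$ one has by definition $W \not\subseteq \{|z_i| = |z_j|\}$, so $W \cap \{|z_i| = |z_j|\}$ is a proper real-analytic subset of $W$ of real codimension at least $1$, hence of measure zero in $W$. Any $\xi$ outside the finite union of these subsets satisfies $|\xi_i| = |\xi_j|$ if and only if $i \sim j$, so its maximizing set $\Lambda(\xi)$ is exactly the class containing any maximizing index; this picks out the desired class $C_k$. Then $W_{C_k}$ is Euclidean-open in $W$, and its image in $\mathbb{P}(W)$ is a non-empty open subset of an irreducible projective variety of complex dimension $\geq 1$, hence uncountable; choose $[\xi] \neq [\eta]$ in this image.

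To conclude, for any $\xi \in W_{C_k}$ the vector $\xi_{C_k}$ equals $\xi_{i_k}$ times the fixed unimodular vector $(c_{i,i_k})_{i \in C_k}$. Setting $u = \xi/\xi_{i_k}$ and $v = \eta/\eta_{i_k}$, one then has $u_i = v_i = c_{i,i_k} \in \mathbb{T}$ for $i \in C_k$, and $|u_i|,|v_i| < 1$ for $i \notin C_k$, while $u \neq v$ because $[\xi] \neq [\eta]$. Lemma \ref{lem:counterexample} applied with $\Lambda = C_k$ then shows that $[I]^\bot$ is not essentially normal. I expect the main obstacle to be the holomorphic-rigidity step, which promotes the real-analytic constraint $|z_i| = |z_j|$ on the complex variety $W$ to the holomorphic constraint $z_i = c_{ij} z_j$; once that rigidity is in hand, the measure-theoretic dimension count and the assembly of $u,v$ are essentially bookkeeping.
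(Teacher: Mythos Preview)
Your proof is correct and reaches the same endpoint as the paper---reducing to Lemma~\ref{lem:counterexample} by producing two boundary points $u,v\in Z(I)\cap\partial\mathbb{D}^d$ that agree on a common set of unimodular coordinates---but the routes differ in how those points are located. The paper works \emph{locally}: it uses the Baire Category Theorem to find some $E_i=\{z\in V:|z_i|=\max_j|z_j|\}$ with nonempty interior, passes via the Implicit Function Theorem to a smooth chart $\varphi:B\to\Omega_2\subset E_i$, and then applies the open mapping theorem to each ratio $\psi_j\circ\varphi=z_j/z_i$ on $B$ to see that it is either constant or maps openly into $\mathbb{D}$; a non-constant $\psi_j$ then supplies the second point. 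You instead work \emph{globally}: the rigidity step promotes $W\subseteq\{|z_i|=|z_j|\}$ to $z_i=c_{ij}z_j$ on all of $W$, yielding an equivalence relation on $\{1,\dots,d\}$ whose $m$ classes confine $W$ to an $m$-dimensional linear subspace (forcing $m\geq 2$); a measure-zero argument then picks out a class $C_k$ that occurs as a maximizing set, and projectivization of the open cone $W_{C_k}$ produces two projectively inequivalent points. Your approach avoids Baire and the IFT and gives a cleaner structural picture (the coordinate partition), at the cost of needing a word about singularities: your maximum-modulus and connectedness claims for $z_i/z_j$ should be stated on the smooth locus of $W\setminus\{z_j=0\}$, which is a connected complex manifold since $W$ is irreducible, and then extended to all of $W$ by density. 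Both arguments rest on the same rigidity principle (a holomorphic map into $\overline{\mathbb{D}}$ that touches $\mathbb{T}$ is constant), applied at different scales.
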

\begin{proof}
    Suppose $\dim_{\mathbb{C}}Z(I)=k\geq2$. Let $V$ be any irreducible component of $Z(I)$ of complex dimension $k$. For $i=1,\ldots,d$, denote by
    $$E_i=\{z\in V:|z_i|=\max_{1\leq j\leq d}|z_j|\}.$$
    Clearly each $E_i$ is closed as a subset of $V$, and $V=\bigcup_{1\leq i\leq d}E_i$. Then by the Baire Category Theorem, some $E_i$ is of the second category as a subset of $V$, and therefore we can find an open subset $\Omega_1\subset E_i$. Then since $\Sing (V)$, the set of singular points of $V$, is of the first category, there must be a nonsingular point $u\in\Omega_1\subset E_i$. By homogeneity we may assume $u\in\partial\mathbb{D}^d$. Let $g_1,\ldots,g_m$ be a set of generators for the prime ideal of $V$, then the matrix $\left(\frac{\partial g_j}{\partial z_i}(u)\right)_{i,j}$ is of rank $d-k$. By the Implicit Function Theorem, there is an open ball $B\subset\mathbb{C}^k$, an open neighbourhood $\Omega_2\subset\Omega_1$ of $u$, and an analytic bijection $\varphi$ that maps $B$ onto $\Omega_2$.

    Define the maps $\psi_j:\Omega_2\to\mathbb{C}^d,z\mapsto z_j/z_i,1\leq j\leq d$, and set $\psi=(\psi_1,\ldots,\psi_d)$. Since $\Omega_2\subset E_i$, $\psi_j$ maps $\Omega_2$ into $\bar{\mathbb{D}}$, and $\psi$ maps $\Omega_2$ into $Z(I)\cap\partial\mathbb{D}^d$. Then $\psi_j\circ\varphi$ maps $B$ analytically into $\bar{\mathbb{D}}$. Therefore $\psi_j\circ\varphi$ maps $B$ either onto an open subset of $\mathbb{D}$ or to a constant $c_j$ in $\bar{\mathbb{D}}$. If $\psi$ is constant on $\Omega_2$, then $\Omega_2\subset\{z\in\mathbb{C}^d:z_j=c_jz_i,1\leq j\leq d\}$, which cannot be of dimension $k\geq2$. Therefore some $\psi_j$ is nonconstant on $\Omega_2$, and hence there exists $v\in\Omega_2$ such that $\psi_j(v),\psi_j(u)$ are two different points in $\mathbb{D}$. Clearly for $1\leq k\leq d$, $\psi_k(u)$ and $\psi_k(v)$ should either both be in $\mathbb{D}$, or be of the same value in $\mathbb{T}$. Then since $\psi(u),\psi(v)\in Z(I)\cap\partial\mathbb{D}^d$, $[I]^\bot$ cannot be essentially normal by Lemma \ref{lem:counterexample}.
\end{proof}

\begin{rem}
Lemma \ref{prop:varietydim} shows that, in the polydisc case, if the homogenous quotient module is essentially normal, then the dimension of zero variety is less than or equal to 1. We speculate that in Lemma \ref{prop:varietydim} the requirement of homogeneity could be dropped.
\end{rem}

Combine Lemma \ref{lem:counterexample} and Lemma \ref{prop:varietydim} and we immediately obtain the following result.
\begin{prop}\label{prop:variety_necessity}
    Let $I\subset\poly$ be a homogeneous ideal for which $[I]^\bot$ is essentially normal and of infinite dimension, then the zero variety of $I$ satisfies Condition A.
\end{prop}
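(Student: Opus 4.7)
The plan is to argue by contradiction, using both lemmas established earlier in the section. First, combining Lemma \ref{prop:varietydim} with the hypothesis that $[I]^\bot$ is infinite-dimensional (which, as noted in the introduction, forces $\dim_{\mathbb{C}} Z(I) \geq 1$), I conclude that $\dim_{\mathbb{C}} Z(I) = 1$. Since $I$ is homogeneous, $Z(I)$ is a finite union of complex lines through the origin, so $Z_{\mathbb{D}^d}(I) = \bigcup_{i=1}^n V_{u_i}$ for finitely many distinct open discs $V_{u_i} = \{\lambda u_i : \lambda \in \mathbb{D}\}$ with representatives $u_i \in \partial V_{u_i}$ chosen so that $\max_j |u_{i,j}| = 1$. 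In particular, each $\Lambda_i = \{j : |u_{i,j}| = 1\}$ is nonempty.

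Suppose, for contradiction, that Condition A fails. Then there exist $i \neq j$ with $\Lambda_i = \Lambda_j =: \Lambda$ and the vectors $u_{i,\Lambda}, u_{j,\Lambda} \in \mathbb{T}^{|\Lambda|}$ linearly dependent over $\mathbb{C}$. Since every coordinate of both vectors has modulus one, this dependence forces $u_{j,\Lambda} = \alpha \, u_{i,\Lambda}$ for some $\alpha \in \mathbb{T}$. I then set $u := u_i$ and $w := \bar{\alpha}\, u_j$. Both are accumulation points of $Z_{\mathbb{D}^d}(I)$: $u$ as the limit of $r u_i \in V_{u_i}$ and $w$ as the limit of $(r \bar{\alpha}) u_j \in V_{u_j}$ as $r \nearrow 1$.

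Next I verify the hypotheses of Lemma \ref{lem:counterexample} for the pair $(u, w)$. For $k \in \Lambda$ one has $w_k = \bar{\alpha}\, u_{j,k} = u_{i,k} = u_k$ with $|u_k| = |w_k| = 1$; for $k \in \Lambda^c$, the defining property of $\Lambda = \Lambda_i = \Lambda_j$ yields $|u_k|, |w_k| < 1$. It only remains to check $u \neq w$: if $u = w$ then also $u_{j,\Lambda^c} = \alpha\, u_{i,\Lambda^c}$, so $u_j = \alpha u_i$ and $V_{u_i} = V_{u_j}$, contradicting distinctness of the discs. Hence $u \neq w$, Lemma \ref{lem:counterexample} applies, and $[I]^\bot$ fails to be essentially normal, contradicting the hypothesis.

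The only substantive step is the second paragraph: translating the algebraic failure of Condition A into the geometric configuration of two distinct accumulation points that agree on the coordinates indexed by $\Lambda$ and are strictly inside $\mathbb{D}$ on the complementary coordinates. Once the right pair $(u, w)$ is identified, the rest is a routine verification, so I do not expect any serious obstacle.
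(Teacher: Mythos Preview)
Your proposal is correct and follows exactly the approach the paper indicates: it combines Lemma~\ref{prop:varietydim} (to force $\dim_{\mathbb C}Z(I)=1$, hence $Z_{\mathbb D^d}(I)=\bigcup_i V_{u_i}$ by homogeneity) with Lemma~\ref{lem:counterexample} (applied to the pair $u=u_i$, $w=\bar\alpha u_j$ manufactured from a failure of Condition~A). The paper itself merely says ``Combine Lemma~\ref{lem:counterexample} and Lemma~\ref{prop:varietydim},'' and your write-up is precisely the unpacking of that sentence.
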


\section{Construction of essentially normal quotient modules}
In this section, we establish a strategy for constructing essentially normal quotient modules, beginning with those associated to essentially quasi-prime ideals. Suppose $u\in\partial\mathbb D^d$, and $I_u$ is a homogenous ideal with $Z_{\mathbb{D}^d}(I_u)=V_u$. By \cite[Theorem 2.17]{WZ}, we know that $[I_u]^\perp$ is essentially normal provided $u\in\mathbb T^d$. To study essential normality of non-distinguished homogenous quotient modules, we have to investigate more on $[I_u]^\perp$.

 To continue, we need some terminologies. Let $H_1,H_2$ be Hilbert spaces. $A\in B(H_1,H_2)$ is called essentially bounded from below if there is a constant $c>0$ and compact operator $K$ such that $A^*A+K\geq cI_{H_1}$. Similarly $A\in B(H_1,H_2)$ is called an essential isometry if $I_{H_1}-A^*A$ is compact. $A$ is said essentially unitary if both $I_{H_2}-A A^*$ and $I_{H_1}-A^*A$ are compact. According to \cite{BDF}, every essentially unitary operator $A\in B(H)$ can be decomposed as $A=S+K$ where $K$ is compact, and $S$ is either a unitary operator, or a shift of multiplicity $n$, or the adjoint of a shift of multiplicity $n$, according to its Fredholm index being $0,n,-n$.

The following lemma can help us to prove the Fredholmness of some essential isometries.
\begin{lem}\label{lem:closedrange}
    If $A\in B(H_1,H_2)$ is an essential isometry, then the range of $A$ is closed.
\end{lem}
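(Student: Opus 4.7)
The strategy is to exploit the fact that $A^*A$ is a compact perturbation of the identity, so its spectrum clusters at $1$, and thereby show that $A$ is bounded below on a finite-codimensional subspace. Once this is in hand, closedness of $\range A$ follows by splitting off a finite-dimensional piece.

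First, write $A^*A=I_{H_1}-K$ with $K$ compact and self-adjoint. Since $A^*A\ge 0$ and $I_{H_1}-A^*A$ is compact, Riesz--Schauder applied to $K$ shows that the spectrum of $A^*A$ lies in $[0,\infty)$ and can accumulate only at $1$. In particular only finitely many spectral values of $A^*A$ lie in $[0,\tfrac12]$. Let $E$ be the spectral projection of $A^*A$ associated with $[0,\tfrac12]$; then $\rank E<\infty$, and on $(I_{H_1}-E)H_1$ one has $A^*A\ge \tfrac12 I_{H_1}$, which gives the lower bound $\|Ax\|\ge \tfrac{1}{\sqrt 2}\|x\|$ for every $x\in (I_{H_1}-E)H_1$.

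Second, decompose $A=A(I_{H_1}-E)+AE$. The summand $A(I_{H_1}-E)$ has closed range: if $A(I_{H_1}-E)x_n\to y$, the lower bound forces $\{(I_{H_1}-E)x_n\}$ to be Cauchy, so it converges to some $z\in (I_{H_1}-E)H_1$ with $Az=y$. The summand $AE$ has range in the finite-dimensional subspace $A(EH_1)$, hence closed range. Therefore
$$
\range A \;=\; \range A(I_{H_1}-E)\;+\;\range AE,
$$
and the right hand side is the sum of a closed subspace and a finite-dimensional subspace, which is itself closed by a standard fact in functional analysis.

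I do not anticipate any serious obstacle in this argument; the key point is simply the spectral picture of $A^*A$ modulo compact operators. The only mildly subtle step is the closedness of the sum of a closed subspace and a finite-dimensional one, which is standard and can either be verified directly or quoted. In particular this proof also reveals that $A$ is Fredholm, a fact that will be useful in the subsequent applications.
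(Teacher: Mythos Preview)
Your argument is correct and rests on the same idea as the paper's: since $A^*A$ is a compact perturbation of the identity, it is Fredholm, and hence $A$ is bounded below off a finite-dimensional subspace. The paper's execution is more compressed: it simply notes that $A^*A$ is Fredholm and therefore has closed range, so by the Inverse Mapping Theorem $A^*A$ is invertible on $(\ker A)^\bot=(\ker A^*A)^\bot$; positivity then gives $\|Ax\|^2=\langle A^*Ax,x\rangle\ge c\|x\|^2$ there, and closedness of $\range A$ follows directly. Your version unpacks the same spectral fact explicitly via the projection $E$ onto $[0,\tfrac12]$ and then appeals to the ``closed plus finite-dimensional is closed'' lemma. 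Neither approach buys anything the other does not; yours is just more hands-on.

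One correction to your closing remark: the proof does \emph{not} show that $A$ is Fredholm. You have shown closed range and finite-dimensional kernel, i.e.\ that $A$ is semi-Fredholm, but nothing here bounds $\dim\coker A$. For instance, the isometric embedding $H_1\to H_1\oplus H_1$, $x\mapsto(x,0)$, is an essential isometry with infinite-dimensional cokernel. In the paper's subsequent applications, Fredholmness of the relevant operator $S_h^*$ is obtained only after a separate argument that $\ker S_h=\{0\}$.
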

\begin{proof}
    Since $I_{H_1}-A^*A$ is compact, $A^*A$ is Fredholm and therefore has closed range. Then by the Inverse Mapping Theorem $A^*A$ is invertible on $(\ker A)^\bot$, and therefore bounded from below on $(\ker A)^\bot$. Consequently $A$ is bounded from below on $(\ker A)^\bot$ and has closed range.
\end{proof}
We begin our construction of essentially normal quotient modules with a careful consideration on the result of \cite{WZ}. Recall that the radical of an ideal $I\subset\poly$ is the ideal
$$\sqrt{I}=\{f\in\poly:\exists n\in\mathbb{N},f^n\in I\}.$$
\begin{lem}\label{lem:distinguished}
    Let $u\in\mathbb{T}^d$, and $I\subset\poly$ be an homogeneous ideal satisfying $Z_{\mathbb D^d}(I)=V_u$. Then $\mathcal{N}=[I]^\bot$ is essentially normal, and there is an essentially unitary operator $S\in B(\mathcal{N})$ and compact operators $K_i$ such that $S_{z_i}=u_iS+K_i$ for $1\leq i\leq d$. Consequently for $h(z)=d^{-1}\sum_{i=1}^d\bar{u}_iz_i$, the operator $S_h^*$ is essentially unitary on $\mathcal{N}$.
\end{lem}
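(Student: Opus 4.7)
The essential normality of $\mathcal{N}=[I]^\bot$ is the content of [WZ, Theorem 2.17]; the real task is to produce an essentially unitary $S$ satisfying $S_{z_i}=u_iS+K_i$ on $\mathcal{N}$. The key algebraic step is the product formula $S_{p_1}S_{p_2}=S_{p_1p_2}$ for all polynomials $p_1,p_2$, which follows immediately from the $M_{z_j}$-invariance of $[I]$ (the cross term $P_{\mathcal{N}}M_{p_1}P_{[I]}M_{p_2}$ vanishes because $M_{p_1}$ maps $[I]$ into $[I]$). Consequently, whenever $p\in\sqrt{I}$ with $p^n\in I$, we have $S_p^n=S_{p^n}=0$, so $S_p$ is nilpotent on $\mathcal{N}$. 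Since $\mathcal{N}$ is essentially normal, the image of $C^*(S_{z_1},\ldots,S_{z_d})$ in the Calkin algebra $B(\mathcal{N})/\mathcal{K}(\mathcal{N})$ is commutative, hence isomorphic to $C(X)$ for some compact $X\subset\overline{\mathbb{D}}^d$; nilpotents in $C(X)$ vanish identically, so $S_p\in\mathcal{K}(\mathcal{N})$ for every $p\in\sqrt{I}$.

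Because $u\in\mathbb{T}^d$, the prime ideal $\sqrt{I}$ is $J_u$, generated by $\bar{u}_iz_i-\bar{u}_jz_j$. Applying the previous step to these generators yields $\bar{u}_iS_{z_i}-\bar{u}_jS_{z_j}\in\mathcal{K}(\mathcal{N})$ for all $i,j$. Setting $S:=S_h$ with $h=d^{-1}\sum_{i}\bar{u}_iz_i$ then gives $S\equiv\bar{u}_jS_{z_j}\pmod{\mathcal{K}}$ for every $j$, and the identity $S_{z_i}=u_iS+K_i$ with compact $K_i$ follows at once.

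The remaining task is essential unitarity of $S$. Since $S$ is essentially normal, $I-S^*S\equiv I-SS^*\pmod{\mathcal{K}}$, so only $I-S^*S\in\mathcal{K}$ need be checked. Using $S\equiv\bar{u}_1S_{z_1}$, the isometry property $M_{z_1}^*M_{z_1}=I$, and the $M_{z_1}^*$-invariance of $\mathcal{N}$, one derives
\[
I-S_{z_1}^*S_{z_1}=\big(P_{[I]}M_{z_1}P_{\mathcal{N}}\big)^*\big(P_{[I]}M_{z_1}P_{\mathcal{N}}\big),
\]
so everything reduces to the compactness of the cross-commutator $[P_{\mathcal{N}},M_{z_1}]=-P_{[I]}M_{z_1}P_{\mathcal{N}}$. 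This is the main difficulty. My plan to handle it is to identify the joint essential spectrum $X$ explicitly: the relations $\bar{u}_iS_{z_i}\equiv\bar{u}_jS_{z_j}$ already pin $X$ inside $\{\lambda u:\lambda\in\overline{\mathbb{D}}\}$, and a Fredholm-index argument on $\mathcal{N}$, exploiting that $M_{z_1}^*$-eigenvectors of the form $(1-\mu u_1z_1)^{-1}g(z_2,\ldots,z_d)$ intersect $\mathcal{N}=[I]^\bot$ in a finite-dimensional subspace when $|\mu|<1$, rules out interior spectrum and forces $X\subset\{\lambda u:|\lambda|=1\}$. On this boundary circle $|h(\lambda u)|=|\lambda|=1$, so the Calkin image of $S_h$ is a unitary element of $C(X)$, and $S_h$ is essentially unitary.
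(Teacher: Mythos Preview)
Your argument for the compactness of $\bar u_iS_{z_i}-\bar u_jS_{z_j}$ is correct and is in fact more conceptual than the paper's: the paper simply quotes \cite[Remark 2.8]{WZ} for the compactness of $(u_iM_{z_i}^*-u_jM_{z_j}^*)|_{[J_u^n]^\bot}$, whereas you observe that $S_p$ is nilpotent for $p\in\sqrt I$ and that a nilpotent element of the commutative Calkin image $C(X)$ must vanish.  That part is fine and is a genuinely different route.

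The gap is in the essential unitarity of $S=S_h$.  What you actually need is $\sigma_e(S_h)\subset\mathbb T$, i.e.\ that $S_h-\lambda$ is Fredholm for every $|\lambda|<1$.  Your ``plan'' only argues that $\ker(S_{z_1}^*-\bar\lambda_1)=\{K_{\lambda_1}(z_1)g:g\in H^2(\mathbb D^{d-1})\}\cap\mathcal N$ is finite-dimensional.  That is true (it identifies with $[I_{\lambda_1}]^\bot$ for the zero-dimensional slice ideal $I_{\lambda_1}$), but finite kernels alone do not give Fredholmness: you still have to produce closed range, equivalently rule out approximate eigenvectors for $S_h-\lambda$ and $S_h^*-\bar\lambda$.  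Nothing in your sketch does this, and there is no obvious way to extract it from the eigenspace count without essentially reproving the estimate you are trying to establish.  In particular, ``$X\subset\{\lambda u:|\lambda|=1\}$'' is equivalent to ``$S_h$ essentially unitary'', so invoking the former to prove the latter is circular unless the Fredholm step is made honest.

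The paper avoids this difficulty entirely: since $J_u^n\subset I$ for some $n$, one has $\mathcal N\subset[J_u^n]^\bot$, and \cite[Remark 2.8]{WZ} gives directly that $M_{z_1}^*|_{[J_u^n]^\bot}$ is an essential isometry.  Hence $S_{z_1}^*=M_{z_1}^*|_{\mathcal N}$ is an essential isometry, and then the essential normality you already have (so $[S_{z_1},S_{z_1}^*]\in\mathcal K$) upgrades this to essential unitarity of $S_{z_1}$, hence of $S=\bar u_1S_{z_1}$.  If you want to keep your self-contained flavour, you would need to supply an independent proof that $P_{\mathcal N}-S_{z_1}S_{z_1}^*$ (equivalently $P_{[I]}M_{z_1}P_{\mathcal N}$) is compact; this is exactly the content of the \cite{WZ} computation and is not a consequence of the eigenspace observation.
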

\begin{proof}
     Recall that $J_u\subset\poly$ is  the ideal generated by $\{u_jz_i-u_iz_j:1\leq i,j\leq d\}$. Since $u\in\mathbb T_d$, $J_u=\sqrt{I}$. Since $\poly$ is a Neotherian ring, $J_u^n\subset I$ for some positive integer $n$. By \cite[Remark 2.8]{WZ}, $M_{z_1}^*$ is essentially isometric on $[J_u^n]^\bot$ and therefore $S_{z_1}^*$ is essentially isometric on $\mathcal{N}$. Then by \cite[Theorem 2.14]{WZ},
     $$P_\mathcal{N}-S_{z_1}^*S_{z_1}=P_\mathcal{N}-S_{z_1}S_{z_1}^*+[S_{z_1},S_{z_1}^*]$$
     is compact, and $S_{z_1}$ is essentially unitary. \cite[Remark 2.8]{WZ} also shows the compactness of $(u_iM_{z_i}^*-u_1M_{z_1}^*)\mid_{[J_u^n]^\bot}$ for $1\leq i\leq d$, which implies the compactness of $u_iS_{z_i}^*-u_1S_{z_1}^*$. Set $S=\bar{u}_1S_{z_1}$ then $K_i=S_{z_i}-u_i\bar{u}_1S_{z_1}$ is compact, and $S_{z_i}=u_iS+K_i$. Moreover,
     $$S_h^*=d^{-1}\sum_{i=1}^du_iS_{z_i}^*=S^*+d^{-1}\sum_{i=1}^dK_i^*,$$
     is essentially unitary.
\end{proof}

The following lemma reveals the structure of quotient modules associated to primary ideals with variety $V_u$. Recall that for $u\in\partial\mathbb{D}^d\backslash\mathbb{T}^d$, $\Lambda=\{i:u_i\in\mathbb{T},1\leq i\leq d\}$. For abbreviation, denote by $u_\Lambda=(u_{i_1},\cdots,u_{i_k})$ and $\mathbb Du_\Lambda=\{\lambda u_\Lambda,\lambda\in \mathbb D\}$. Moreover, the polynomial ring on the variable $z_\Lambda$ is denoted by $\mathbb C[z_\Lambda]$, and the Hardy module on $z_\Lambda$ is denoted by $H^2(\mathbb{D}^\Lambda)$.

\begin{lem}\label{lem:basic_type}
    Let $u\in\partial\mathbb{D}^d\backslash\mathbb{T}^d$, and $I\subset\poly$ be a homogeneous ideal with variety $V_u$. Suppose $k=|\Lambda|$, and set $I_0=I\cap \mathbb C[z_\Lambda]$. Then $I_0$ is a homogeneous ideal of $\mathbb{C}[z_\Lambda]$ with variety $Z_{\mathbb D^k}(I_0)=\mathbb{D}u_\Lambda$ and
    \begin{equation}\label{eq:series}
        [I]^\bot\subset\Big\{f\in\Hardy:f(z)=\sum_{\alpha\in\mathbb{Z}_+^{d-k}}z_{\Lambda^c}^\alpha f_\alpha(z_\Lambda),f_\alpha\in\mathcal{N}_0\Big\},
    \end{equation}
    where $\mathcal{N}_0=[I_0]^\bot$ is the quotient module of $H^2(\mathbb{D}^\Lambda)$ associated to $I_0$. Moreover if we denote by $$h(z)=k^{-1}\sum\limits_{i\in\Lambda}\bar{u}_iz_i,$$ then $S_h^*$ is essentially unitary and for $i,j\in\Lambda$, $u_iS_{z_i}^*-u_jS_{z_j}^*$ is compact.
\end{lem}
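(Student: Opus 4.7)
I split the argument into three steps: the ideal-theoretic claims about $I_0$ and inclusion~\eqref{eq:series}; the invocation of Lemma~\ref{lem:distinguished} applied to the base case $I_0$; and the transfer of the operator-theoretic conclusions from $\mathcal{N}_0$ up to $\mathcal{N}$, which is the main obstacle.

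First, $I_0 = I \cap \mathbb{C}[z_\Lambda]$ is homogeneous because $I$ is. The identity $Z_{\mathbb{D}^k}(I_0) = \mathbb{D} u_\Lambda$ follows by elimination theory: $V(I_0)$ is the Zariski closure of the image of $V(I) = \mathbb{C} u$ under the coordinate projection $\mathbb{C}^d \to \mathbb{C}^k$, namely $\mathbb{C} u_\Lambda$, whose intersection with $\mathbb{D}^k$ is $\mathbb{D} u_\Lambda$ since $u_\Lambda \in \mathbb{T}^k$. For~\eqref{eq:series}, the inclusion $I_0\cdot\mathbb{C}[z] \subseteq I$ gives $[I] \supseteq [I_0\cdot\mathbb{C}[z]]$; under the factorization $\Hardy = H^2(\mathbb{D}^\Lambda) \otimes H^2(\mathbb{D}^{\Lambda^c})$ the latter submodule equals $[I_0]\otimes H^2(\mathbb{D}^{\Lambda^c})$, whose orthogonal complement is $\mathcal{N}_0 \otimes H^2(\mathbb{D}^{\Lambda^c})$; expanding in powers of $z_{\Lambda^c}$ gives exactly~\eqref{eq:series}.

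Because $u_\Lambda \in \mathbb{T}^k$ by definition of $\Lambda$, Lemma~\ref{lem:distinguished} applies to $I_0$ as a homogeneous ideal of $\mathbb{C}[z_\Lambda]$, producing an essentially unitary $S_0 \in B(\mathcal{N}_0)$ and compact operators $K_{i,0}$ ($i\in\Lambda$) with $S_{z_i,0} = u_i S_0 + K_{i,0}$. Consequently $u_i S_{z_i,0}^* - u_j S_{z_j,0}^*$ is compact on $\mathcal{N}_0$ for $i,j \in \Lambda$, and $S_{h_0}^* := k^{-1}\sum_{i\in\Lambda} u_i S_{z_i,0}^*$ is essentially unitary on $\mathcal{N}_0$.

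The main obstacle is lifting these statements to $\mathcal{N}$. For $i \in \Lambda$, $M_{z_i} = M_{z_i,\Lambda}\otimes I$ on $\Hardy$, so $M_{z_i}^*$ preserves both $\widetilde{\mathcal{N}} := \mathcal{N}_0 \otimes H^2(\mathbb{D}^{\Lambda^c})$ and $\mathcal{N}\subseteq\widetilde{\mathcal{N}}$, and coincides on $\widetilde{\mathcal{N}}$ with $S_{z_i,0}^*\otimes I$; hence $S_{z_i}^*|_\mathcal{N} = (S_{z_i,0}^*\otimes I)|_\mathcal{N}$ and $S_h^*|_\mathcal{N} = (S_{h_0}^*\otimes I)|_\mathcal{N}$. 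The difficulty is that for a nonzero compact $C$ on $\mathcal{N}_0$ the operator $C\otimes I$ is never compact on $\widetilde{\mathcal{N}}$, so the conclusions must exploit how thinly $\mathcal{N}$ sits inside $\widetilde{\mathcal{N}}$. I use the homogeneous grading $\mathcal{N} = \bigoplus_n \mathcal{N}_n$ (each summand finite-dimensional since $I$ is homogeneous): each of the relevant operators respects the grading, so compactness of $u_iS_{z_i}^*-u_jS_{z_j}^*$ and essential unitarity of $S_h^*|_\mathcal{N}$ reduce to showing $\|A|_{\mathcal{N}_n}\| \to 0$ as $n \to \infty$ for each $A$ in $\{u_iS_{z_i}^*-u_jS_{z_j}^*,\ I-S_h^*S_h,\ I-S_hS_h^*\}$. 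The decisive input is a Hilbert-function comparison: $\dim\mathcal{N}_n$ is eventually constant because $\dim_{\mathbb{C}} Z(I) = 1$, whereas $\dim\widetilde{\mathcal{N}}_n$ grows polynomially like $n^{d-k}$; combined with $\|C|_{\mathcal{N}_{0,a}}\|\to 0$ for each compact $C$ furnished by Lemma~\ref{lem:distinguished}, this forces every unit $f\in\mathcal{N}_n$ to concentrate along the bidegree pieces $\mathcal{N}_{0,a}\otimes H^2(\mathbb{D}^{\Lambda^c})_b$ with $a$ large relative to $n$, yielding the required operator-norm decay. Establishing this concentration estimate uniformly in $n$ is the technical heart of the proof.
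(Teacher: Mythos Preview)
Your first two steps are fine and in fact cleaner than the paper's: invoking the Closure Theorem for $Z_{\mathbb{D}^k}(I_0)=\mathbb{D}u_\Lambda$ is more efficient than the paper's explicit division-algorithm computation, and the tensor identification $[I_0\cdot\mathbb{C}[z]]=[I_0]\otimes H^2(\mathbb{D}^{\Lambda^c})$ gives~\eqref{eq:series} immediately.

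The gap is in the third step. The Hilbert-function comparison does \emph{not} force concentration. Knowing that $\dim\mathcal{N}_n$ is bounded while $\dim\widetilde{\mathcal{N}}_n\sim n^{d-k}$ tells you $\mathcal{N}_n$ is a thin slice of $\widetilde{\mathcal{N}}_n$, but says nothing about \emph{where} that slice sits in the bigrading; a bounded-dimensional subspace could perfectly well live in the low-$a$ region. Nor does the input $\|C|_{\mathcal{N}_{0,a}}\|\to 0$ help here: that is a statement about $C$, not about the position of $\mathcal{N}_n$, and you need concentration \emph{before} you can exploit it, not the other way around. The paper obtains concentration from an ingredient you have not used: since $h_j:=z_j-u_jh\in\sqrt{I}$ for each $j\in\Lambda^c$, the Nullstellensatz gives $h_j^N\in I$, and expanding $S_{z_j}^{*n}=(\bar u_jS_h^*+S_{h_j}^*)^n$ with $S_{h_j}^{*N}=0$ yields $\|S_{z_j}^{n}\|\lesssim n^{N-1}|u_j|^{n}\to 0$. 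This is exactly the statement that the $z_{\Lambda^c}$-mass of any $f\in[I]^\perp$ decays uniformly, i.e.\ the concentration you want; the paper packages it as $Q_n\to P_{[I]^\perp}$ in norm.

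There is a second, smaller issue. For $A=u_iS_{z_i}^*-u_jS_{z_j}^*$ your scheme works once concentration is available, because $A=(C\otimes I)|_{\mathcal{N}}$ with $C$ compact on $\mathcal{N}_0$. But $I-S_h^*S_h$ is \emph{not} of this form: $S_h=P_{\mathcal{N}}M_h|_{\mathcal{N}}$ involves $P_{\mathcal{N}}$, which is not a tensor, so $I-S_h^*S_h=(I-M_h^*M_h)|_{\mathcal{N}}+M_h^*P_{\mathcal{N}}^{\perp}M_h|_{\mathcal{N}}$ and the second term has no reason to be small gradewise. The paper sidesteps this: it first proves only that $S_h^*$ is essentially \emph{isometric} (this does reduce to $(C\otimes I)|_{\mathcal{N}}$ via $I-S_hS_h^*=P_{\mathcal{N}}(I-M_hM_h^*)|_{\mathcal{N}}$), and then upgrades to essentially unitary by a Fredholm argument: $\ker S_h=\{0\}$ because $hf\in I$ with $h\notin\sqrt{I}$ forces $f\in I$ (using that $I$ is primary), so $S_h^*$ is Fredholm with left inverse $S_h$ in the Calkin algebra.
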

\begin{proof}
    Clearly $u_\Lambda\in Z(I_0)$, which implies $\mathbb{D}u_\Lambda\subset Z_{\mathbb{D}^k}(I_0)$. Conversely suppose $v\in Z_{\mathbb{D}^k}(I_0)$, then choose  $\tilde{v}\in\mathbb{D}^d$ such that $\tilde{v}_\Lambda=v$ and $\tilde{v}_j=u_jh(v),\forall j\in\Lambda^c$. Denote by $J\subset\poly$ the ideal generated by $\{z_j-u_jh:j\in\Lambda^c\}$,  then by the Hilbert Nullstellensatz there is an integer $N>0$ such that $J^N\subset I$. By a division algorithm, each $g\in I$ can be decomposed as $$g=\sum\limits_{j\in\Lambda^c}(z_j-u_j h)\tilde{g}_j+g_1,g_1\in\mathbb C[z_{\Lambda}],\tilde{g}_j\in\poly.$$
    Set $g_2=-\sum\limits_{j\in\Lambda^c}(z_j-u_j h) \tilde{g}_j\in J$, then we have
    $$g_1^N-g_2^N=g\sum_{k=0}^{N-1}g_1^kg_2^{N-1-k}\in I.$$
    This equality together with $g_2^N\in I$ implies $g_1^N\in I_0$. It follows from $g_1(v)^N=0$ and $g_2(\tilde{v})=0$ that $g(\tilde{v})=0$, which ensures $\tilde{v}\in Z_{\mathbb{D}^d}(I)=\mathbb{D}u$ and consequently $v\in\mathbb{D}u_\Lambda$. Hence we have proved $Z_{\mathbb D^k}(I_0)=\mathbb{D}u_\Lambda$.

    Notice that each function $f\in H^2(\mathbb D^d)$ has an expansion
    $$f(z)=\sum_{\alpha\in\mathbb{Z}_+^{d-k}}z_{\Lambda^c}^\alpha f_\alpha(z_\Lambda),f_\alpha\in H^2(\mathbb{D}^\Lambda).$$
    For  $f\in [I]^\bot$, since $M_g^*f=0$ for each $g\in I_0$ we have $f_\alpha\in\mathcal{N}_0,\forall\alpha\in\mathbb{Z}_+^{d-k}$. Hence (\ref{eq:series}) is proved.

     For $j\in\Lambda^c$, since $h_j^N(z)=(z_j-u_jh(z))^N\in I$ we have for $n\geq N$ that
    \begin{eqnarray*}
        S_{z_j}^{*n}&=&[\bar{u}_jS_h^*+(S_{z_j}-\bar{u}_jS_h^*)]^n\\
        &=&\sum_{l=0}^n\binom{n}{l}\bar{u}_j^{n-l}S_h^{*n-l}(S_{z_j}-\bar{u}_jS_h^*)^l\\
        &=&\sum_{l=0}^{N-1}\binom{n}{l}\bar{u}_j^{n-l}S_h^{*n-l}(S_{z_j}-\bar{u}_jS_h^*)^l
    \end{eqnarray*}
    Then for $n\geq2N$ it holds
    \begin{eqnarray}\label{eq:basictype1}
        ||S_{z_j}^n||&\leq&\sum_{l=0}^{N-1}\binom{n}{l}|\bar{u}_j|^{n-l}(1+|u_j|)^l\notag\\
        &\leq&N\binom{n}{N-1}|\bar{u}_j|^{n-N+1}(1+|u_j|)^{N-1}.
    \end{eqnarray}

    For positive integers $n$, denote by $Q_n$ the projection from $[I]^\bot$ to the subspace of $\Hardy$ spanned by $\{z_{\Lambda^c}^\alpha g:\alpha\in\mathbb{Z}_+^{d-k},\max_i|\alpha_i|<n,g\in\mathcal{N}_0\}$. Then by (\ref{eq:basictype1}) it holds for $f\in[I]^\bot$ that
    \begin{eqnarray*}
        ||f-Q_nf||^2&=&||\sum_{\max_j\alpha_j>n}z_{\Lambda^c}^\alpha f_\alpha||^2\\
        &\leq&\sum_{j=1}^{d-k}||\sum_{\alpha_j>n}z_{\Lambda^c}^\alpha f_\alpha||^2\\
        &=&\sum_{j\in\Lambda^c}||S_{z_j}^{*n}f||^2\\
        &\leq&||f||^2 N^2\binom{n}{N-1}^2\sum_{j\in\Lambda^c}|\bar{u}_j|^{2n-2N+2},
    \end{eqnarray*}
    which converges uniformly for $||f||\leq1$ as $n\to\infty$. Then we conclude $\lim_{n\to\infty}Q_n=P_{[I]^\bot}$. By Lemma \ref{lem:distinguished} $M_h^*\mid_{\mathcal{N}_0}$ is essentially isometric, and $(u_iM_{z_i}^*-u_jM_{z_j}^*)\mid_{\mathcal{N}_0}$ is compact for $i,j\in\Lambda$. Then for any integer $n>0$ and sequence $\{g_m\}$ of unit vectors in $[I]^\bot$ that converges weakly to $0$, we obtain by Lemma \ref{lem:distinguished} that for $i\in\Lambda$
    $$\liminf_{m\to\infty}||S_h^*g_m||\geq\liminf_{m\to\infty}||Q_nM_h^*g_m||\geq\liminf_{m\to\infty}||M_h^*Q_ng_m||=\liminf_{m\to\infty}||Q_ng_m||,$$
    which implies $\lim_{m\to\infty}||S_h^*g_m||=1$, proving that $S_{z_i}^*$ is essentially isometric. Similarly we have
    $$\lim_{m\to\infty}||(u_iM_{z_i}^*-u_jM_{z_j}^*)g_m||=\lim_{n\to\infty}\lim_{m\to\infty}||Q_n(u_iM_{z_i}^*-u_jM_{z_j}^*)g_m||=0,$$
    which induces the compactness of $u_iS_{z_i}^*-u_jS_{z_j}^*$.

    Since $S_h^*$ is essentially isometric, $P_\mathcal{N}-S_hS_h^*$ is compact and therefore $\ker S_h^*$ must be of finite dimension. For homogeneous $f\in\coker S_h^*$, it holds for each $g\in[I]^\bot$ that
    $$\langle hf,g\rangle=\langle f,S_h^*g\rangle=0,$$
    which induces $hf\in I$. Then since $I$ is primary and $h\notin\sqrt{I}$, $f$ belongs to $I$. Therefore
    $$\ker S_h=\coker S_h^*=\{0\},$$
    and $S_h^*$ is Fredholm by Lemma \ref{lem:closedrange}. Then $S_h$ is the inverse of $S_h^*$ in the Calkin algebra on $\mathcal{N}$, which implies the compactness of $P_{\mathcal{N}}-S_h^*S_h$, and ensures that $S_h$ is essentially unitary.
\end{proof}
The following proposition ensures the essential normality of essentially quasi-prime quotient modules.
\begin{prop}\label{prop:basictype}
    Let $u\in\partial\mathbb{D}^d$, and $I\subset\poly$ be an essentially quasi-prime homogeneous ideal with variety $V_u$.  Then $\mathcal{N}=[I]^\bot$ is essentially normal. Moreover there is an essentially unitary operator $S\in B(\mathcal{N})$ and compact operators $K_i$ such that
    \begin{equation}\label{eq:basictype2}
        S_{z_i}=u_iS+K_i,\forall 1\leq i\leq d.
    \end{equation}
\end{prop}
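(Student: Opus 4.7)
The plan is to construct a single essentially unitary operator $S \in B(\mathcal{N})$ such that $S_{z_i} = u_i S + K_i$ with $K_i$ compact for every $1 \leq i \leq d$; essential normality will then be immediate, since $[S_{z_i}, S_{z_j}^*]$ reduces modulo compacts to $u_i\bar{u}_j[S, S^*]$, and $[S, S^*]$ is compact because $S$ is essentially unitary. I would take $S := S_h$ with $h = k^{-1}\sum_{i \in \Lambda}\bar{u}_i z_i$ as in Lemma \ref{lem:basic_type}. For $i \in \Lambda$, averaging the operators $u_iS_{z_i}^* - u_\ell S_{z_\ell}^*$ (compact by Lemma \ref{lem:distinguished} if $u \in \mathbb{T}^d$ or by Lemma \ref{lem:basic_type} otherwise) over $\ell\in\Lambda$ and taking adjoints yields $S_{z_i} - u_i S$ compact, so the required identity holds on $\Lambda$.

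The heart of the argument is the case $j \in \Lambda^c$, and I plan to combine two algebraic ingredients. First, since $\mathcal{M} := \mathcal{N}^\bot$ is a submodule, $M_p\mathcal{M} \subset \mathcal{M}$ for every polynomial $p$, so $P_\mathcal{N} M_p P_\mathcal{M} = 0$; this yields the Sarason-type multiplicativity $S_{fg} = S_f S_g$ on all of $\poly$. Second, $h(\lambda u) = \lambda$ on $V_u$ forces the degree-one polynomial $z_j - u_j h$ to vanish on $V_u$, so $z_j - u_j h \in \sqrt{I}$; then essential quasi-primality guarantees that the graded module $\sqrt{I}/I'$ is finite-dimensional, so there is an integer $N$ with $\sqrt{I}_k = I'_k$ for every $k > N$.

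Fixing any $i \in \Lambda$ and any $n \geq N$, the homogeneous polynomial $z_i^n(z_j - u_j h)$ lies in $\sqrt{I}_{n+1} = I'_{n+1}$, hence admits a decomposition $z_i^n(z_j - u_j h) = p + q$ with $p \in I$ and $q \in J_u$. Multiplicativity yields $S_{z_i}^n(S_{z_j} - u_j S) = S_p + S_q$; here $S_p = 0$ because $p\cdot\mathcal{N} \subset [I] = \mathcal{M}$, while writing $q = \sum_s r_s(\bar{u}_{i_s}z_{i_s} - \bar{u}_{\ell_s}z_{\ell_s})$ and applying multiplicativity once more shows $S_q = \sum_s S_{r_s}(\bar{u}_{i_s}S_{z_{i_s}} - \bar{u}_{\ell_s}S_{z_{\ell_s}})$ is a finite sum of bounded-times-compact terms (each $\bar{u}_{i_s}S_{z_{i_s}} - \bar{u}_{\ell_s}S_{z_{\ell_s}}$ being compact by Lemma \ref{lem:basic_type}), hence compact. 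On the other hand $S_{z_i}^n = u_i^n S^n + (\text{compact})$, with $S^n$ essentially unitary (products of essentially unitary operators remain so) and $|u_i|=1$, so $S_{z_i}^n$ is essentially unitary and in particular Fredholm; composing with an Atkinson parametrix then forces $S_{z_j} - u_j S$ to be compact, completing the verification of (\ref{eq:basictype2}). The principal difficulty I anticipate is this algebraic orchestration of the essentially quasi-prime condition via the multiplicativity identity and the Fredholmness of $S_{z_i}^n$; the analytic content has already been packaged into Lemma \ref{lem:basic_type}.
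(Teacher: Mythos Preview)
Your proposal is correct and follows essentially the same route as the paper's proof: set $S=S_h$, handle $i\in\Lambda$ via the compactness of $\bar u_iS_{z_i}-\bar u_\ell S_{z_\ell}$ from Lemma~\ref{lem:basic_type}, and for $j\in\Lambda^c$ multiply $h_j=z_j-u_jh$ by a high-degree polynomial so that essential quasi-primality forces the product into $I'=I+J_u$, then cancel the multiplier using its essential unitarity. The only cosmetic differences are that the paper multiplies by $h^N$ rather than $z_i^n$ and phrases the computation on the adjoint side ($S_{h_j}^*S_h^{*N}=S_{f_j}^*$) instead of invoking the multiplicativity $S_{fg}=S_fS_g$ directly.
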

\begin{proof}
    If $u\in\mathbb{T}^d$, then the conclusion is contained in Lemma \ref{lem:distinguished}. So we assume $u\in\partial\mathbb{D}^d\backslash\mathbb{T}^d$ in the remaining of the proof. By Lemma \ref{lem:basic_type} $S_h^*$ is essentially unitary, and if $i,j\in\Lambda$ then $\bar{u}_iS_{z_i}-\bar{u}_jS_{z_j}$ is compact. It follows that
    $$[S_h^*,S_h]=(S_h^*S_h-P_{\mathcal{N}})-(S_hS_h^*-P_{\mathcal{N}})\in\mathcal{K}.$$
    This equality together with the compactness of $u_iS_{z_i}^*-u_jS_{z_j}^*$ gives
    $$[S_{z_j}^*,S_{z_i}]\in\mathcal{K},\forall i,j\in\Lambda.$$

    Set $S=S_h$ and $K_i=S_{z_i}-u_iS_h$ for $i\in\Lambda$, then $K_i$ is compact and $S_{z_i}=u_iS+K_i$. For $j\in\Lambda^c$, clearly $h_j\in\sqrt{I}$ and there $h_jh^n\in\sqrt{I}$ for every natural number $n$. Since $\dim\sqrt{I}/I'<+\infty$, there is an integer $N>0$ such that $f\in I'$ provided that $f\in\sqrt{I}$ is homogeneous and $\deg f\geq N$. Therefore $h_jh^N\in I'$, and there exists $f_j\in J_u$ such that $h_jh^N-f_j\in I$. As a consequence
    $S_{h_j}^*S_h^{*N}=S_{f_j}^*$, which is compact by Lemma \ref{lem:basic_type}. Then since $S_h^*$ is essentially unitary $S_{h_j}^*$ must be compact, completing the proof of (\ref{eq:basictype2}). As a consequence, $[I]^\bot$ is essentially normal.
\end{proof}
\begin{rem}\label{rem:Fredholmindex}
    In either Lemma \ref{lem:distinguished} or Proposition \ref{prop:basictype}, $S_h$ is essentially unitary, and therefore Fredholm. Clearly $1\in\ker S_h^*$ and $\dim\ker S_h^*>0$. Take an arbitrary homogeneous $g\in\ker S_h$, then $hg\in I$. But since $h(u)\neq0$, $h$ cannot belong to $\sqrt{I}$. Then $g\in I$ and hence $\ker S_h=\{0\}$. Therefore the Fredholm index of $S_h$ is nonzero.
\end{rem}
\begin{cor}\label{cor:basictype}
    Let $\mathcal{N}$ be as in Proposition \ref{prop:basictype}. Then $P_\mathcal{N}^\bot M_{z_i}P_\mathcal{N}$ is compact for $i\in\Lambda$.
\end{cor}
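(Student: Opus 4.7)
The plan is to reduce the problem to analyzing $A^*A$ where $A = P_\mathcal{N}^\perp M_{z_i} P_\mathcal{N}$, then use the structural formula $S_{z_i} = u_i S + K_i$ from Proposition \ref{prop:basictype}. Since compactness of $A$ is equivalent to compactness of $A^*A$ (via the polar decomposition $A = V|A|$), it suffices to show $A^*A$ is compact on $\mathcal{N}$.

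The key computation is straightforward. Using $P_\mathcal{N}^\perp = I - P_\mathcal{N}$ and the fact that $M_{z_i}$ is an isometry on $H^2(\mathbb{D}^d)$, one gets
\begin{equation*}
A^*A = P_\mathcal{N} M_{z_i}^* P_\mathcal{N}^\perp M_{z_i} P_\mathcal{N} = P_\mathcal{N} M_{z_i}^* M_{z_i} P_\mathcal{N} - P_\mathcal{N} M_{z_i}^* P_\mathcal{N} M_{z_i} P_\mathcal{N} = P_\mathcal{N} - S_{z_i}^* S_{z_i}.
\end{equation*}
So the task becomes showing that $S_{z_i}^* S_{z_i} - P_\mathcal{N}$ is compact whenever $i\in\Lambda$.

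For this I invoke Proposition \ref{prop:basictype}: there is an essentially unitary $S\in B(\mathcal{N})$ and compact $K_i$ with $S_{z_i} = u_i S + K_i$. Expanding,
\begin{equation*}
S_{z_i}^* S_{z_i} = |u_i|^2 S^*S + \bar u_i S^* K_i + u_i K_i^* S + K_i^* K_i.
\end{equation*}
For $i\in\Lambda$ we have $|u_i|=1$, so modulo compacts $S_{z_i}^*S_{z_i} \equiv S^*S$. Since $S$ is essentially unitary, $S^*S - P_\mathcal{N}$ is compact, hence $S_{z_i}^*S_{z_i} - P_\mathcal{N}$ is compact, giving $A^*A$ compact and concluding the proof.

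There is no real obstacle here; the corollary is essentially a bookkeeping consequence of the structure theorem in Proposition \ref{prop:basictype}. The only point worth flagging is the use of the identity $M_{z_i}^*M_{z_i} = I$ on the ambient Hardy module, which is what singles out Hardy space; for the weighted Bergman version (Theorem \ref{thm:iff-wbs}) one would instead use that $M_{z_i}^*M_{z_i} - I$ is compact on $A^2_s(\mathbb{D}^d)$, yielding the same conclusion.
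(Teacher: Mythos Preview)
Your proof is correct and follows essentially the same route as the paper: compute $A^*A=P_\mathcal{N}M_{z_i}^*P_\mathcal{N}^\bot M_{z_i}P_\mathcal{N}=P_\mathcal{N}M_{z_i}^*M_{z_i}P_\mathcal{N}-S_{z_i}^*S_{z_i}$, then use that $S_{z_i}=u_iS+K_i$ with $|u_i|=1$ makes $S_{z_i}$ essentially unitary so $P_\mathcal{N}-S_{z_i}^*S_{z_i}$ is compact. The only cosmetic difference is that the paper writes the inequality $0\leq A^*A\leq P_\mathcal{N}-S_{z_i}^*S_{z_i}$ (using only that $M_{z_i}$ is a contraction), which handles the weighted Bergman case without the extra remark you added; your use of the isometry identity $M_{z_i}^*M_{z_i}=I$ gives equality on Hardy space and is equally valid there.
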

\begin{proof}
    By Proposition \ref{prop:basictype}, $S_{z_i}$ is essentially unitary for $i\in\Lambda$, and then $P_\mathcal{N}-S_{z_i}^*S_{z_i}$ is compact. Therefore since
    $$0\leq P_\mathcal{N}M_{z_i}^*P_\mathcal{N}^\bot M_{z_i}P_\mathcal{N}=P_\mathcal{N}M_{z_i}^*M_{z_i}P_\mathcal{N}-S_{z_i}^*S_{z_i}\leq P_\mathcal{N}-S_{z_i}^*S_{z_i},$$
    $P_\mathcal{N}M_{z_i}^*P_\mathcal{N}^\bot M_{z_i}P_\mathcal{N}$ is compact.
\end{proof}
To prove the essential normality of sums of essentially quasi-prime quotient modules, we need the concept of asymptotical orthogonality. As in \cite{GWp1}, if subspaces $N_1,N_2$ of the Hilbert space $H$ satisfy that $P_{N_1}P_{N_2}$ is compact, then $N_1$ and $N_2$ are said to be asymptotically orthogonal to each other. As usual, let $\pi: B(H)\to B(H)/K(H)$ be the quotient map.
\begin{lem}
    Let $u,v$ be two different points in $\partial\mathbb{D}^d$, and $1\leq i,j\leq d$ satisfy $u_i,v_i,u_j,v_j\in\mathbb{T}$ and $\bar{u}_i v_i\neq\bar{u}_j v_j$. Let $I_u,I_v\subset\poly$ be essentially quasi-prime homogeneous ideals with variety $V_u,V_v$ respectively, then $[I_u]^\bot$ is \ao to $[I_v]^\bot$.
\end{lem}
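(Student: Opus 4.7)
The plan is to construct a single multiplication polynomial $B$ that annihilates $\mathcal{N}_u=[I_u]^\bot$ modulo the compacts while acting as a nonzero scalar multiple of the essentially unitary operator $S_v$ on $\mathcal{N}_v=[I_v]^\bot$. Denoting the orthogonal projections by $P_u, P_v$, the asymptotic orthogonality $P_uP_v\in\mathcal{K}$ will then follow from an elementary algebraic manipulation.

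\medskip

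Step 1 (choice of $B$). Since $u_i,u_j,v_i,v_j\in\mathbb{T}$, indices $i,j$ lie in both $\Lambda_u$ and $\Lambda_v$, so Proposition \ref{prop:basictype} applies on each side giving essentially unitary $S_u,S_v$ with
\[
 S_{z_k}|_{\mathcal{N}_u}=u_kS_u+K_k^u,\qquad S_{z_k}|_{\mathcal{N}_v}=v_kS_v+K_k^v\qquad (k=i,j),
\]
with compact $K_k^u,K_k^v$. Set $B=u_jM_{z_i}-u_iM_{z_j}$. Corollary \ref{cor:basictype} gives $P_u^\bot M_{z_k}P_u\in\mathcal{K}$ for $k=i,j$, so
\[
 BP_u \,\equiv\, u_jS_{z_i}|_{\mathcal{N}_u}-u_iS_{z_j}|_{\mathcal{N}_u}\,\equiv\,u_iu_jS_u-u_iu_jS_u=0\pmod{\mathcal{K}}.
\]
Hence $BP_u\in\mathcal{K}$.

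\medskip

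Step 2 (behaviour on $\mathcal{N}_v$). The same calculation on $\mathcal{N}_v$ yields
\[
 P_vBP_v\big|_{\mathcal{N}_v}=u_jS_{z_i}|_{\mathcal{N}_v}-u_iS_{z_j}|_{\mathcal{N}_v}=(u_jv_i-u_iv_j)S_v+\mathrm{compact}.
\]
Because $|u_i|=|u_j|=1$, the scalar satisfies $u_jv_i-u_iv_j=u_iu_j(\bar u_iv_i-\bar u_jv_j)$, which is nonzero by hypothesis. Set $\lambda=u_jv_i-u_iv_j\neq 0$. Since $S_v$ is essentially unitary on $\mathcal{N}_v$,
\[
 (P_vBP_v)^*(P_vBP_v)=|\lambda|^2P_v+\mathrm{compact}.
\]

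\medskip

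Step 3 (extracting $P_vP_u$). Because the submodule $\mathcal{N}_v^\bot=[I_v]$ is invariant under each $M_{z_k}$, we have $P_vM_{z_k}P_v^\bot=0$, hence $P_vB=P_vBP_v$ and, taking adjoints, $B^*P_v=P_vB^*P_v$. Therefore
\[
 P_vB^*P_vB=(P_vB^*P_v)(P_vBP_v)=|\lambda|^2P_v+\mathrm{compact}.
\]
Multiplying on the right by $P_u$ and using $BP_u\in\mathcal{K}$ from Step 1,
\[
 |\lambda|^2P_vP_u+\mathrm{compact}=P_vB^*P_v(BP_u)\in\mathcal{K}.
\]
Since $\lambda\neq 0$, this forces $P_vP_u\in\mathcal{K}$, and taking adjoints gives $P_uP_v\in\mathcal{K}$, proving that $\mathcal{N}_u$ and $\mathcal{N}_v$ are asymptotically orthogonal.

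\medskip

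The only nontrivial point is Step 1: finding the correct linear combination. The choice $B=u_jM_{z_i}-u_iM_{z_j}$ is forced by the normalization $S_{z_k}|_{\mathcal{N}_u}\equiv u_kS_u$, which makes $B$ essentially vanish on $\mathcal{N}_u$; the hypothesis $\bar u_iv_i\neq\bar u_jv_j$ then guarantees that the analogous scalar on $\mathcal{N}_v$ is nonzero. Everything else is formal manipulation using the structural results already established for essentially quasi-prime quotient modules.
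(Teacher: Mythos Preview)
Your proof is correct. It uses the same structural inputs as the paper (Proposition~\ref{prop:basictype} and Corollary~\ref{cor:basictype}) but packages them differently. The paper works entirely in the Calkin algebra: it shows that $\pi(P_{[I_u]^\bot}P_{[I_v]^\bot})=\bar u_iv_i\,\pi(S_u^*S_v)$ by inserting $P_{[I_v]^\bot}M_{z_i}^*P_{[I_v]^\bot}M_{z_i}P_{[I_v]^\bot}\equiv P_{[I_v]^\bot}$ and then commuting $M_{z_i}^*$ past the projections, and obtains the same identity with $\bar u_jv_j$ in place of $\bar u_iv_i$; subtracting gives $\pi(S_u^*S_v)=0$ and hence $\pi(P_{[I_u]^\bot}P_{[I_v]^\bot})=0$. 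You instead build the single linear polynomial $B=u_jz_i-u_iz_j$, observe that $BP_u\in\mathcal K$ while $P_vBP_v$ is a nonzero scalar multiple of the essentially unitary $S_v$ modulo compacts, and conclude by one multiplication. The two arguments are essentially dual: the paper exploits two scalar identities for $\pi(P_uP_v)$, while you pre-subtract and work with one operator that vanishes on $\mathcal N_u$ and is essentially invertible on $\mathcal N_v$. Your route is perhaps more transparent as to \emph{why} the hypothesis $\bar u_iv_i\neq\bar u_jv_j$ is exactly what is needed; the paper's route is a line shorter and avoids checking the module-invariance identity $P_vBP_v^\bot=0$ separately.
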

\begin{proof}
    By Corollary \ref{cor:basictype}, both $P_{[I_u]^\bot}M_{z_i}^*P_{[I_u]}$ and $P_{[I_v]}M_{z_i}P_{[I_v]^\bot}$ are compact. Then by Proposition \ref{prop:basictype} we obtain
    \begin{eqnarray}\label{eq:ao}
        \pi(P_{[I_u]^\bot}P_{[I_v]^\bot})&=&\pi(P_{[I_u]^\bot}P_{[I_v]^\bot}M_{z_i}^*P_{[I_v]^\bot}M_{z_i}P_{[I_v]^\bot})\notag\\
        &=&\pi(P_{[I_u]^\bot}M_{z_i}^*P_{[I_v]^\bot}M_{z_i}P_{[I_v]^\bot})\notag\\
        &=&\pi(P_{[I_u]^\bot}M_{z_i}^*P_{[I_u]^\bot}P_{[I_v]^\bot}M_{z_i}P_{[I_v]^\bot})\notag\\
        &=&\pi(\bar{u}_i v_iS_u^*S_v),
    \end{eqnarray}
    and similarly $\pi(P_{[I_u]^\bot}P_{[I_v]^\bot})=\pi(\bar{u}_j v_jS_u^*S_v)$. Hence $(\bar{u}_i v_i-\bar{u}_j v_j)\pi(S_u^*S_v)=0$, inducing that $S_u^*S_v$ is compact. Then the conclusion of the lemma follows from (\ref{eq:ao}).
\end{proof}
\begin{lem}
    Let $u,v$ be two different points in $\partial\mathbb{D}^d$, and $1\leq i\leq d$ satisfy $u_i\in\mathbb{T}$ and $v_i\in\mathbb{D}$. Let $I_u,I_v\subset\poly$ be essentially quasi-prime homogeneous ideals with variety $V_u,V_v$ respectively, then $[I_u]^\bot$ is \ao to $[I_v]^\bot$.
\end{lem}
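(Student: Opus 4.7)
The plan is to show that $P_{[I_u]^\bot}P_{[I_v]^\bot}$ has essential norm zero. The strategy exploits the strict inequality $|v_i|<1$ by iterating an essential contraction on the $v$-side; the two sides are treated asymmetrically.

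I would start with two algebraic observations. Since $M_{z_i}$ is an isometry on $\Hardy$, one has $M_{z_i}^{*n}M_{z_i}^n=I$ for all $n\geq 1$, whence
$$
P_{[I_u]^\bot}P_{[I_v]^\bot}=P_{[I_u]^\bot}P_{[I_v]^\bot}M_{z_i}^nM_{z_i}^{*n}.
$$
On the other hand, the submodule invariance $M_{z_i}[I_v]\subset[I_v]$ gives $P_{[I_v]^\bot}M_{z_i}P_{[I_v]}=0$, from which a short induction (using $M_{z_i}^k[I_v]\subset[I_v]$ for all $k$) yields $P_{[I_v]^\bot}M_{z_i}^n=(S_{z_i}^v)^n$ as operators on $\Hardy$. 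Combining the two identities,
$$
\|P_{[I_u]^\bot}P_{[I_v]^\bot}\|_{\mathrm{ess}}\leq\|P_{[I_u]^\bot}(S_{z_i}^v)^n\|_{\mathrm{ess}}\,\|M_{z_i}^{*n}\|\leq\|(S_{z_i}^v)^n\|_{\mathrm{ess}}.
$$

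Next, I would invoke Proposition \ref{prop:basictype} applied to $I_v$, which gives $S_{z_i}^v=v_iS_v+K_i^v$ with $S_v$ essentially unitary on $\mathcal{N}_v=[I_v]^\bot$ and $K_i^v$ compact. Expanding $(v_iS_v+K_i^v)^n$ and noting that every summand containing at least one factor $K_i^v$ is compact, I get $(S_{z_i}^v)^n\equiv v_i^nS_v^n$ modulo compact operators. Since $\pi(S_v)$ is unitary in the Calkin algebra, $\|S_v^n\|_{\mathrm{ess}}\leq 1$, and therefore
$$
\|(S_{z_i}^v)^n\|_{\mathrm{ess}}=|v_i|^n\|S_v^n\|_{\mathrm{ess}}\leq|v_i|^n.
$$
Letting $n\to\infty$ and using $|v_i|<1$ forces $\|P_{[I_u]^\bot}P_{[I_v]^\bot}\|_{\mathrm{ess}}=0$, so $P_{[I_u]^\bot}P_{[I_v]^\bot}$ is compact.

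There is no substantial obstacle here once one recognizes the asymmetric roles of the two projections: only the submodule invariance of $[I_v]$ under $M_{z_i}$ and the essential contractivity $\|S_{z_i}^v\|_{\mathrm{ess}}=|v_i|<1$ are needed, while the $u$-side enters only through $\|P_{[I_u]^\bot}\|\leq 1$. In contrast to the previous lemma, the coordinate index $i$ need not lie in $\Lambda_v$, and Corollary \ref{cor:basictype} is not invoked; the key technical input from the Hardy-module theory is the isometric nature of the $M_{z_i}$, which allows us to feed arbitrarily many powers of $M_{z_i}$ into the estimate.
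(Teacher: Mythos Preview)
Your first displayed identity is false. From $M_{z_i}^{*n}M_{z_i}^n=I$ you conclude
$P_{[I_u]^\bot}P_{[I_v]^\bot}=P_{[I_u]^\bot}P_{[I_v]^\bot}M_{z_i}^nM_{z_i}^{*n}$,
but the operator on the right is $P_{[I_u]^\bot}P_{[I_v]^\bot}$ composed with the \emph{projection} $M_{z_i}^nM_{z_i}^{*n}$ onto $z_i^n\Hardy$, not the identity. Concretely, the constant function $1$ lies in both $[I_u]^\bot$ and $[I_v]^\bot$ (homogeneous ideals with one-dimensional variety contain no nonzero constants), so $P_{[I_u]^\bot}P_{[I_v]^\bot}1=1$, whereas $M_{z_i}^nM_{z_i}^{*n}1=0$. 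The identity therefore fails already for $n=1$.

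More seriously, your stated strategy --- using only $\|P_{[I_u]^\bot}\|\le 1$ on the $u$-side --- cannot possibly succeed: replacing $P_{[I_u]^\bot}$ by the identity (or by $P_{[I_v]^\bot}$ itself) in your argument would yield that $P_{[I_v]^\bot}$ is compact, which is absurd. The hypothesis $u_i\in\mathbb T$ must be used substantively. In the paper the insertion is done on the $u$-side, not the $v$-side: since $i\in\Lambda_u$, Proposition~\ref{prop:basictype} makes $S_{z_i}^u$ essentially unitary, so one may write $\pi(P_{[I_u]^\bot})=\pi\big((S_{z_i}^u)^n(S_{z_i}^{u,*})^n\big)$; Corollary~\ref{cor:basictype} (and an induction) then shows $P_{[I_u]}M_{z_i}^nP_{[I_u]^\bot}$ is compact, which lets one replace $P_{[I_u]^\bot}M_{z_i}^nP_{[I_u]^\bot}$ by $M_{z_i}^nP_{[I_u]^\bot}$ in the Calkin algebra. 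Only after this does the genuinely correct part of your argument --- $P_{[I_v]^\bot}M_{z_i}^n=(S_{z_i}^v)^nP_{[I_v]^\bot}$ and $\|(S_{z_i}^v)^n\|_{\mathrm{ess}}\le|v_i|^n$ --- come into play to give $\|\pi(P_{[I_v]^\bot}P_{[I_u]^\bot})\|\le|v_i|^n\to 0$.
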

\begin{proof}
    By Corollary \ref{cor:basictype} $P_{[I_u]}M_{z_i}P_{[I_u]^\bot}$ is compact. If $P_{[I_u]}M_{z_i}^nP_{[I_u]^\bot}$ is compact, then $$P_{[I_u]}M_{z_i}^{n+1}P_{[I_u]^\bot}=P_{[I_u]}M_{z_i}P_{[I_u]^\bot}M_{z_i}^nP_{[I_u]^\bot}+P_{[I_u]}M_{z_i}P_{[I_u]}M_{z_i}^nP_{[I_u]^\bot}\in\mathcal{K}.$$
    Hence by inductive method  $P_{[I_u]}M_{z_i}^nP_{[I_u]^\bot}$ is compact for any positive integer $n$. Then by Lemma \ref{lem:basic_type} and Proposition \ref{prop:basictype} it holds for any integer $n>0$ that
    \begin{eqnarray}
        \pi(P_{[I_v]^\bot}P_{[I_u]^\bot})&=&\pi(P_{[I_v]^\bot}P_{[I_u]^\bot}M_{z_i}^nP_{[I_u]^\bot}M_{z_i}^{*n}P_{[I_u]^\bot})\\
        &=&\pi(P_{[I_v]^\bot}M_{z_i}^nM_{z_i}^{*n}P_{[I_u]^\bot})\notag\\
        &=&\bar{u}_i^nv_i^n\pi(S_v^nS_u^{n*}).\notag
    \end{eqnarray}
    Then $||\pi(P_{[I_v]^\bot}P_{[I_u]^\bot})||\leq|v_i|^n$ for every positive integer $n$, inducing that $\pi(P_{[I_v]^\bot}P_{[I_u]^\bot})=0$.
\end{proof}
Combining these two lemmas, we immediately get the following result.
\begin{cor}\label{cor:ao}
    Let $u,v\in\partial\mathbb{D}^d$, such that $V_u\neq V_v$ and $V_u\cup V_v$ satisfies Condition A. If $I_u,I_v\subset\poly$ are essentially quasi-prime homogeneous ideals with variety $V_u,V_v$ respectively, then $[I_u]^\bot$ is \ao to $[I_v]^\bot$.
\end{cor}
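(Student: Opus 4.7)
The plan is to reduce Corollary \ref{cor:ao} directly to the two lemmas immediately preceding it, by a short case analysis on how $V_u\cup V_v$ satisfies Condition A. Recall that Condition A offers two alternatives for the pair $\{V_u,V_v\}$: either $\Lambda_u\neq\Lambda_v$, or else $\Lambda_u=\Lambda_v$ and the vectors $u_\Lambda,v_\Lambda$ are linearly independent in $\mathbb{C}^{|\Lambda|}$. I will handle these two cases separately and assemble them into the corollary.

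In the first case, after swapping $u$ and $v$ if necessary, there exists an index $i\in\Lambda_u\setminus\Lambda_v$; that is, $|u_i|=1$ while $|v_i|<1$. This is precisely the hypothesis of the second of the two preceding lemmas, so the asymptotic orthogonality of $[I_u]^\bot$ and $[I_v]^\bot$ follows immediately. The swapped situation is legitimate because asymptotic orthogonality (compactness of $P_{[I_u]^\bot}P_{[I_v]^\bot}$) is manifestly a symmetric relation, the adjoint of a compact operator being compact.

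In the second case set $\Lambda=\Lambda_u=\Lambda_v$, and suppose for contradiction that $\bar{u}_iv_i=\bar{u}_jv_j$ for every pair $i,j\in\Lambda$. Since $|u_i|=1$ on $\Lambda$, this common value $\lambda:=\bar{u}_iv_i$ would force $v_i=\lambda u_i$ for every $i\in\Lambda$, contradicting the linear independence of $u_\Lambda$ and $v_\Lambda$. Therefore there exist $i,j\in\Lambda$ with $\bar{u}_iv_i\neq\bar{u}_jv_j$, and since all four coordinates $u_i,u_j,v_i,v_j$ lie on $\mathbb{T}$, the hypothesis of the first preceding lemma is satisfied. Applying that lemma yields the desired asymptotic orthogonality.

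The only step requiring a moment of thought is the translation performed in the second case: one must recognise that the linear-independence clause of Condition A is exactly the combinatorial statement needed to produce a pair of indices with $\bar{u}_iv_i\neq\bar{u}_jv_j$ on the unit torus. Once this is noted, no further work is required, and Corollary \ref{cor:ao} is an immediate consequence of the two lemmas it follows.
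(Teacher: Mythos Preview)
Your argument is correct and is exactly the case analysis the paper has in mind when it says the corollary follows immediately from the two preceding lemmas. The only addition you make is spelling out why the linear-independence clause of Condition~A produces indices $i,j\in\Lambda$ with $\bar u_iv_i\neq\bar u_jv_j$, which is the right observation and is implicit in the paper.
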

Asymptotically orthogonal quotient modules posses the following additive property.
\begin{lem}\label{lem:asum}
    If essentially normal quotient modules $\mathcal{N}_1,\mathcal{N}_2,\mathcal{N}_3$ of $\Hardy$ are asymptotically orthogonal to each other, then the quotient module $\mathcal{N}=\mathcal{N}_1+\mathcal{N}_2$ is essentially normal and asymptotically orthogonal to $\mathcal{N}_3$.
\end{lem}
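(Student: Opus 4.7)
Write $P_i = P_{\mathcal{N}_i}$ and let $P_\mathcal{N}$ denote the projection onto $\mathcal{N} = \mathcal{N}_1 + \mathcal{N}_2$. The whole argument would reduce to the single identity
\[ P_\mathcal{N} \equiv P_1 + P_2 \pmod{\mathcal{K}}. \]
As a preliminary I would verify that $\mathcal{N}_1 + \mathcal{N}_2$ is closed: $P_1 P_2 P_1$ is compact and self-adjoint on $\mathcal{N}_1$ with $1$-eigenspace $\mathcal{N}_1 \cap \mathcal{N}_2$, so the intersection is finite dimensional; after splitting it off, the Friedrichs angle between the complementary pieces is strictly positive, so the sum is closed. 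Moreover $\mathcal{N}^\perp = \mathcal{M}_1 \cap \mathcal{M}_2$ is an intersection of submodules, hence itself a submodule, so $\mathcal{N}$ is a bona fide quotient module.

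For the key identity I would introduce the bounded operator $T \colon \mathcal{N}_1 \oplus \mathcal{N}_2 \to \Hardy$ defined by $T(f_1, f_2) = f_1 + f_2$, whose range is the closed subspace $\mathcal{N}$. Direct computation gives $T T^* = P_1 + P_2$, while $T^*T - \mathrm{Id}$ is the off-diagonal block operator with entries $P_1 P_2|_{\mathcal{N}_2}$ and $P_2 P_1|_{\mathcal{N}_1}$, which are compact by asymptotic orthogonality. Hence $T^*T$ is Fredholm with $T^*T \equiv \mathrm{Id} \pmod{\mathcal{K}}$, and its Moore-Penrose inverse $(T^*T)^+$ also satisfies $(T^*T)^+ \equiv \mathrm{Id} \pmod{\mathcal{K}}$. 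Since $T$ has closed range $\mathcal{N}$, $P_\mathcal{N} = T(T^*T)^+ T^* \equiv T T^* = P_1 + P_2$ modulo compacts.

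Setting $R = P_\mathcal{N} - P_1 - P_2$, both assertions of the lemma follow from $R \in \mathcal{K}$ by routine manipulation. Asymptotic orthogonality to $\mathcal{N}_3$ reads $P_\mathcal{N} P_3 = P_1 P_3 + P_2 P_3 + R P_3$, a sum of compact operators. For essential normality of $\mathcal{N}$, I would use the submodule property: since $\mathcal{M}_i$ is invariant under $M_{z_j}$, one has $P_i M_{z_j} = P_i M_{z_j} P_i$, so the cross term $P_1 M_{z_j} P_2 = P_1 M_{z_j} P_1 P_2$ is compact, and similarly $P_2 M_{z_j} P_1$ is compact. Consequently $P_\mathcal{N} M_{z_j} P_\mathcal{N} \equiv P_1 M_{z_j} P_1 + P_2 M_{z_j} P_2 \pmod{\mathcal{K}}$, and expanding the commutator $[S_{z_i}^*, S_{z_j}]$ on $\mathcal{N}$ accordingly, the two diagonal pieces are compact by the essential normality of $\mathcal{N}_1$ and $\mathcal{N}_2$, while each of the two cross-product pieces contains a factor $P_1 P_2$ or $P_2 P_1$ and is therefore also compact.

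The principal obstacle is the compactness of $R$. Once $\mathcal{N}_1 + \mathcal{N}_2$ is known to be closed, the pseudo-inverse trick through $T$ delivers it in a few lines, but the subtlety lies in establishing that the intersection is automatically finite dimensional and that the Moore-Penrose inverse behaves well modulo compacts. All subsequent bookkeeping is straightforward Calkin-algebra algebra using $P_1 P_2 \in \mathcal{K}$ and the submodule property.
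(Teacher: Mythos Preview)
Your argument is correct and follows the same underlying strategy as the paper: both hinge on the identity $P_{\mathcal{N}}\equiv P_{\mathcal{N}_1}+P_{\mathcal{N}_2}\pmod{\mathcal{K}}$, from which asymptotic orthogonality to $\mathcal{N}_3$ and essential normality of $\mathcal{N}$ are then read off in the Calkin algebra. The differences are purely in packaging. The paper outsources the two structural facts (closedness of $\mathcal{N}_1+\mathcal{N}_2$ and compactness of $P_{\mathcal{N}}-P_{\mathcal{N}_1}-P_{\mathcal{N}_2}$) to \cite[Propositions~3.1, 3.2]{GWp1}, whereas you supply self-contained proofs via the Friedrichs-angle argument and the Moore--Penrose formula $P_{\mathcal{N}}=T(T^*T)^{+}T^*$; your route has the advantage of not depending on an external reference. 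For the essential normality step, the paper takes a small detour: it invokes \cite[Theorem~3.3]{GWp1} for the compactness of the \emph{self}-commutators $[S_{z_j}^*+\bar{i}^{\,l}S_{z_k}^*,\,S_{z_j}+i^{l}S_{z_k}]$ and then recovers the mixed commutator $[S_{z_j}^*,S_{z_k}]$ by polarization, while you compute $[S_{z_i}^*,S_{z_j}]$ directly using $P_iM_{z_j}=P_iM_{z_j}P_i$ (the submodule property) to kill the cross terms. Your direct computation is marginally cleaner; the paper's polarization trick is a device to reduce to the already-published self-commutator case.
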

\begin{proof}
    By \cite[Proposition 3.1]{GWp1}, $\mathcal{N}$ is closed and therefore a quotient module. By \cite[Theorem 3.3]{GWp1}, if $1\leq j,k\leq d$ then $[S_{z_j}^*+\bar{i}^lS_{z_k}^*,S_{z_j}+i^lS_{z_k}]$ is compact for $l=0,1,2,3$. Then by the identity
    $$[S_{z_j}^*,S_{z_k}]=\sum_{l=0}^3i^l[S_{z_j}^*+\bar{i}^lS_{z_k}^*,S_{z_j}+i^lS_{z_k}]$$
    we obtain the compactness of $[S_{z_j}^*,S_{z_k}]$, which leads to the essential normality of $\mathcal{N}$. By the definition of asymptotical orthogonality, $P_{\mathcal{N}_3}P_{\mathcal{N}_1}$ and $P_{\mathcal{N}_3}P_{\mathcal{N}_2}$ are compact. By \cite[Proposition 3.2]{GWp1}, $P_\mathcal{N}-P_{\mathcal{N}_1}-P_{\mathcal{N}_2}$ is compact, and therefore
    $$P_{\mathcal{N}_3}P_\mathcal{N}=P_{\mathcal{N}_3}(P_\mathcal{N}-P_{\mathcal{N}_1}-P_{\mathcal{N}_2})+P_{\mathcal{N}_3}P_{\mathcal{N}_1}+P_{\mathcal{N}_3}P_{\mathcal{N}_2}\in\mathcal{K},$$
    indicating the asymptotical orthogonality between $\mathcal{N}$ and $\mathcal{N}_3$.
\end{proof}
\begin{thm}\label{thm:construction}
    Let $I\subset\poly$ be a homogeneous ideal with primary decomposition $\bigcap_{j=1}^m I_{u_j}$ with $Z_{\mathbb{D}^d}(I_{u_j})=V_{u_j}$ where $u_j\in\partial\mathbb{D}^d$, and assume that $Z_{\mathbb{D}^d}(I)$ satisfies Condition A. If each $I_{u_j}$ is essentially quasi-prime, then $[I]^\bot$ is essentially normal.
\end{thm}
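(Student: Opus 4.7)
The plan is to decompose $[I]^\bot$ as a finite sum of the essentially normal quotient modules $[I_{u_j}]^\bot$, check that this family is pairwise asymptotically orthogonal, and then iterate the additivity principle of Lemma \ref{lem:asum}.

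\textbf{Decomposition step.} First I would establish $[I]^\bot = \sum_{j=1}^m [I_{u_j}]^\bot$. Since each $I_{u_j}$ is homogeneous, it inherits a grading $I_{u_j} = \bigoplus_k (I_{u_j})_k$ by polynomial degree, and intersection respects this grading: $I_k = \bigcap_j (I_{u_j})_k$ for every $k$. Each graded piece is a finite-dimensional subspace of the degree-$k$ homogeneous polynomials, which sits as a closed subspace of $\Hardy$, so closure in $\Hardy$ commutes with intersection on the graded side. This yields $[I] = \bigcap_j [I_{u_j}]$, and orthogonal complementation gives $[I]^\bot = \overline{\sum_j [I_{u_j}]^\bot}$.

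\textbf{Asymptotic orthogonality.} By Proposition \ref{prop:basictype} each $[I_{u_j}]^\bot$ is essentially normal. Condition A on $Z_{\mathbb{D}^d}(I)=\bigcup_j V_{u_j}$ restricts to Condition A on every pair $V_{u_i}\cup V_{u_j}$ with $i\neq j$, so Corollary \ref{cor:ao} delivers pairwise asymptotic orthogonality of the family $\{[I_{u_j}]^\bot\}_{j=1}^m$. The closedness portion of \cite[Proposition 3.1]{GWp1} (already invoked inside Lemma \ref{lem:asum}) then forces $\sum_j [I_{u_j}]^\bot$ to be already closed, so the overline in the previous step is superfluous.

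\textbf{Induction on $m$.} The case $m=1$ is immediate; for $m=2$ the essential-normality conclusion of Lemma \ref{lem:asum} handles the two-summand step directly. For $m\geq 3$ I carry the stronger inductive hypothesis that $\mathcal{M}_\ell := \sum_{j=1}^\ell [I_{u_j}]^\bot$ is essentially normal and asymptotically orthogonal to $[I_{u_k}]^\bot$ for every $k>\ell$. Applying Lemma \ref{lem:asum} with $\mathcal{N}_1=\mathcal{M}_\ell$, $\mathcal{N}_2=[I_{u_{\ell+1}}]^\bot$, and each remaining $[I_{u_k}]^\bot$ ($k>\ell+1$) in turn serving as the witness $\mathcal{N}_3$ propagates the hypothesis from $\ell$ to $\ell+1$. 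The final step $\mathcal{M}_{m-1}+[I_{u_m}]^\bot = [I]^\bot$ then uses only the essential-normality clause of the lemma.

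The chief obstacle is the decomposition step: the identity $[I] = \bigcap_j [I_{u_j}]$ between Hilbert-module closures is precisely what reduces the Hilbert-module problem to the primary components, and homogeneity is exactly what permits this reduction, by restricting intersection-versus-closure to finite-dimensional graded pieces where the two operations commute automatically. Once this algebraic-to-analytic passage is secured, the rest is careful bookkeeping with the asymptotic-orthogonality calculus already developed in Sections 2 and 3.
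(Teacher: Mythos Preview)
Your argument is correct and follows the same route as the paper: invoke Proposition~\ref{prop:basictype} for essential normality of each $[I_{u_j}]^\bot$, Corollary~\ref{cor:ao} for pairwise asymptotic orthogonality, and then Lemma~\ref{lem:asum} to assemble. The paper's proof is a three-line sketch that leaves both the identity $[I]^\bot=\sum_j[I_{u_j}]^\bot$ and the inductive application of Lemma~\ref{lem:asum} implicit; you have supplied exactly those details, and correctly so.
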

\begin{proof}
    By Proposition \ref{prop:basictype} each $[I_{u_j}]^\bot$ is essentially normal, and by Corollary \ref{cor:ao} $[I_{u_i}]^\bot$ is \ao to $[I_{u_j}]^\bot$ whenever $i\neq j$. Then the essential normality of $[I]^\bot$ follows from Lemma \ref{lem:asum}.
\end{proof}

\section{The ideal associated to an essentially normal quotient module}
The present section is devoted to prove that, Theorem \ref{thm:construction} produces all the essentially normal homogeneous quotient modules of $\Hardy$.

Throughout this section we assume that the ideal $I\subset\poly$ is homogeneous, and denote by $\mathcal{N}=[I]^\bot$. By proposition \ref{prop:variety_necessity}, if $\mathcal{N}$ is essentially normal then $Z(I)$ satisfies Condition A. Write its  primary decomposition as
$$I=\bigcap_{k=1}^m I_k,$$
with the zero variety $Z_{\mathbb D^d}(I_k)=V_{u_k}$ for some $u_k\in\partial\mathbb{D}^d$. Denote by $\mathcal{N}_k=[I_k]^\bot$ for $1\leq k\leq m$, then clearly $\mathcal{N}_k\subset\mathcal{N}$.

A basic fact is that, even if $Z_{\mathbb D^d}(I)$ satisfies Condition A, $[I]^\bot$ might not be essentially normal.
\begin{exam}
    Consider the quotient module of $H^2(\mathbb{D}^2)$ associated to the ideal $I=z_1^2\mathbb{C}[z_1,z_2]$. Clearly $Z_{\mathbb D^2}(I)=V_{(0,1)}$, for which Condition A holds. Then
    $$[\sqrt{I}]^\bot=\overline{\span}\{z_2^n:n=0,1,\ldots\},$$
    being essentially normal. It is routine to verify that $[S_{z_1},S_{z_1}^*]$ is the orthogonal projection onto $[I]^\bot\ominus[\sqrt{I}]^\bot$, which cannot be compact. Notice that $[I]^\bot\ominus[\sqrt{I}]^\bot$ is of infinite dimension.
\end{exam}
We have the following necessary conditions to essential normality of quotient modules.
\begin{lem}
    If $\mathcal{N}$ is essentially normal, then $S_{f}$ is compact whenever
    $f\in\sqrt{I}$.
\end{lem}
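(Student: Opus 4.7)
The plan is to show that $\pi(S_f)$ is both nilpotent and normal in the Calkin algebra of $\mathcal{N}$, forcing it to be zero. This sidesteps any need to analyze $f$ component-by-component and avoids using the geometry of $Z(I)$ at all.

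The first ingredient is the multiplicativity $S_{fg}=S_fS_g$ for polynomials $f,g\in\poly$, which is a standard consequence of $\mathcal{M}=[I]$ being a submodule. Concretely, for $v\in\mathcal{N}$,
\[
S_{fg}v-S_fS_gv = P_\mathcal{N}(fgv) - P_\mathcal{N}\bigl(f\cdot P_\mathcal{N}(gv)\bigr) = P_\mathcal{N}\bigl(f\cdot P_\mathcal{M}(gv)\bigr),
\]
and the last term vanishes because $P_\mathcal{M}(gv)\in\mathcal{M}$ and $\mathcal{M}$ is invariant under multiplication by $f$. Iterating gives $S_{f^N}=S_f^N$. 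Now suppose $f\in\sqrt{I}$, so $f^N\in I$ for some $N$. Then for $v\in\mathcal{N}$, $f^Nv\in f^N\Hardy\subset\mathcal{M}$, which yields $S_{f^N}v=P_\mathcal{N}(f^Nv)=0$. Combining the two facts, $S_f^N=0$, i.e.\ $S_f$ is nilpotent.

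To upgrade nilpotency to compactness, let $\pi\colon B(\mathcal{N})\to B(\mathcal{N})/\mathcal{K}$ denote the Calkin projection. By multiplicativity, $S_{z_i}S_{z_j}=S_{z_j}S_{z_i}$ holds exactly, and by the essential-normality hypothesis $[S_{z_i}^*,S_{z_j}]\in\mathcal{K}$ for all $i,j$. Hence $\pi(S_{z_1}),\ldots,\pi(S_{z_d})$ together with their adjoints pairwise commute, so the $C^*$-subalgebra $\mathcal{A}\subset B(\mathcal{N})/\mathcal{K}$ they generate is commutative. Since $\pi(S_f)$ is a polynomial in these generators, it lies in $\mathcal{A}$ and is therefore normal. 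A normal nilpotent element of any $C^*$-algebra vanishes (iterating the $C^*$-identity gives $\|\pi(S_f)\|^{2^k}=\|\pi(S_f)^{2^k}\|=0$ once $2^k\geq N$), so $\pi(S_f)=0$, which is precisely the statement that $S_f$ is compact.

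I do not anticipate any real obstacle: the argument is algebraic once the passage to the Calkin algebra is carried out, and the only subtlety — namely that the multiplicativity $S_{fg}=S_fS_g$ genuinely requires the submodule $\mathcal{M}$ to be invariant under the full polynomial action — is already built into the definition of a quotient module.
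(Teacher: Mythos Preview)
Your proof is correct and follows essentially the same approach as the paper: both establish that $S_f$ is nilpotent (via $S_f^N=S_{f^N}=0$) and then use essential normality to conclude compactness. The paper carries this out concretely by observing that $(S_f^*S_f)^n-S_f^{*n}S_f^n$ is a sum of terms containing the compact commutator $[S_f^*,S_f]$, whence $(S_f^*S_f)^n$ is compact and so $S_f$ is; your version packages the same computation as ``a normal nilpotent in the Calkin algebra vanishes,'' which is a cleaner phrasing of the identical idea.
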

\begin{proof}
    By definition, there is a natural number $n$ such that $f^n\in I$, and therefore $S_f^{*n}S_f^n=0$. Then since $[S_f^*,S_f]$ is compact, $(S_f^*S_f)^n=(S_f^*S_f)^n-S_f^{*n}S_f^n$ also be compact, leading to the compactness of $S_f$.
\end{proof}
\begin{lem}\label{lem:compactness}
    Suppose $\mathcal{N}$ is essentially normal, and $1\leq k\leq m$. Then $S_f^*P_{\mathcal{N}_k}$ is compact whenever $f\in\sqrt{I_k}$.
\end{lem}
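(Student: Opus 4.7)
The plan is to show that $T := S_f^* P_{\mathcal{N}_k}$ is nilpotent, and then deduce its compactness from nilpotency plus essential normality of $\mathcal{N}$. Pick $n$ with $f^n \in I_k$. For $g \in \mathcal{N}_k = [I_k]^\perp$ and any polynomial $h$, $f^n h \in I_k \subset [I_k]$ gives $\langle g, f^n h \rangle = 0$, and density yields $M_{f^n}^* g = 0$; hence $S_{f^n}^* P_{\mathcal{N}_k} = P_{\mathcal{N}} M_{f^n}^* P_{\mathcal{N}_k} = 0$. Since $p \mapsto S_p$ is multiplicative on polynomials, $(S_f^*)^n P_{\mathcal{N}_k} = 0$. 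Coinvariance of $\mathcal{N}_k$ under $S_f$ (inherited from $[I_k]^\perp$ being coinvariant in $H^2$ under $M_f$) yields $S_f^* P_{\mathcal{N}_k} = P_{\mathcal{N}_k} S_f^* P_{\mathcal{N}_k}$, and therefore $T^n = (S_f^*)^n P_{\mathcal{N}_k} = 0$.

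Decomposing $\mathcal{N} = \mathcal{N}_k \oplus (\mathcal{N} \ominus \mathcal{N}_k)$, coinvariance gives a block form $S_f = \begin{pmatrix} A & B \\ 0 & D \end{pmatrix}$ with $A := P_{\mathcal{N}_k} S_f|_{\mathcal{N}_k}$. Then $T = \begin{pmatrix} A^* & 0 \\ B^* & 0 \end{pmatrix}$, so $T^*T = (AA^* + BB^*) \oplus 0$. The $(1,1)$-entry of $[S_f, S_f^*]$ is $AA^* + BB^* - A^*A$, compact by essential normality of $\mathcal{N}$; hence $T$ is compact if and only if $A$ is, and moreover $[A, A^*] = -BB^* + K_0$ for some compact $K_0$, so $[A, A^*] \le K_0$ by positivity of $BB^*$. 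It therefore suffices to prove that the nilpotent, essentially hyponormal operator $A$ on $\mathcal{N}_k$ is compact.

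For this, consider the filtration $\{0\} \subsetneq \ker A \subset \ker A^2 \subset \cdots \subset \ker A^n = \mathcal{N}_k$ and the orthogonal decomposition $\mathcal{N}_k = \bigoplus_{i=0}^{n-1} V_i$ with $V_i = \ker A^{i+1} \ominus \ker A^i$. Then $A = (A_{ij})$ is strictly upper triangular, with $A_{ij} \colon V_j \to V_i$ vanishing unless $i < j$, and for $v \in V_i$
\[
\sum_{j > i} \|A_{ij}^* v\|^2 - \sum_{j < i} \|A_{ji} v\|^2 = \langle [A, A^*] v, v \rangle \le \langle K_0 v, v \rangle.
\]
Induct on $i$: at $i = 0$ the subtracted sum is empty, so for any weakly null sequence $v^{(m)} \in V_0$ the right-hand side tends to $0$ by compactness of $K_0$, forcing each $A_{0j}^*$ (hence $A_{0j}$) to be compact. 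Given $A_{kl}$ compact for all $k < l \le i$, the subtracted sum tends to $0$ on weakly null sequences in $V_i$, and the same argument yields each $A_{ij}$, $j > i$, compact. Iterating proves every block of $A$ is compact, so $A$ is compact.

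The principal difficulty is the last step: $A$ is not in general essentially normal---the commutator relation inherited from $\mathcal{N}$ only gives essential hyponormality---so the clean identity $(S_f^* S_f)^n \equiv S_f^{*n} S_f^n$ modulo compacts used in the preceding lemma is unavailable for $A$. The kernel-filtration induction using essential hyponormality is the technical device that substitutes for it.
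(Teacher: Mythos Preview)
Your overall strategy---reduce to showing the nilpotent compression $A=P_{\mathcal{N}_k}S_f|_{\mathcal{N}_k}$ is compact via a one-sided commutator bound and a kernel-filtration induction---is sound and genuinely different from the paper's argument. The paper works entirely in the Calkin algebra: from $P_{\mathcal{N}_k}S_f^{2^N}S_f^{*2^N}P_{\mathcal{N}_k}=0$ and the compactness of $[S_f^*,S_f]$ one gets $P_{\mathcal{N}_k}(S_fS_f^*)^{2^N}P_{\mathcal{N}_k}\in\mathcal{K}$; writing this as $X^*X$ with $X=(S_fS_f^*)^{2^{N-1}}P_{\mathcal{N}_k}$ yields $P_{\mathcal{N}_k}(S_fS_f^*)^{2^{N-1}}P_{\mathcal{N}_k}\in\mathcal{K}$, and one halves the exponent down to $1$. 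That route is shorter; yours isolates the structural fact that a nilpotent operator whose self-commutator is compactly bounded from one side must itself be compact, which is of independent interest.

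There is, however, a real slip in your block computation. Coinvariance of $\mathcal{N}_k$ under $S_f$ (equivalently your own identity $S_f^*P_{\mathcal{N}_k}=P_{\mathcal{N}_k}S_f^*P_{\mathcal{N}_k}$) makes $S_f$ \emph{lower} triangular with respect to $\mathcal{N}_k\oplus(\mathcal{N}\ominus\mathcal{N}_k)$, i.e.\ $S_f=\begin{pmatrix}A&0\\ C&D\end{pmatrix}$, not upper triangular. With the correct form $T=S_f^*P_{\mathcal{N}_k}$ has only the $(1,1)$-block $A^*$, so ``$T$ compact $\Leftrightarrow$ $A$ compact'' still holds (even more directly). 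But the $(1,1)$-entry of $[S_f,S_f^*]$ becomes $AA^*-A^*A-C^*C$, whence $[A,A^*]=C^*C+K_0\ge K_0$: you obtain the \emph{opposite} inequality to the one you use. Your induction starting at $i=0$ then yields only a lower bound on $\sum_{j>0}\|A_{0j}^*v\|^2$ and proves nothing. The repair is immediate: run the same induction \emph{downward} from $i=n-1$ (where the sum over $j>i$ is empty, so $\sum_{j<n-1}\|A_{j,n-1}v\|^2\le -\langle K_0v,v\rangle\to 0$ along weakly null sequences), or equivalently apply your filtration argument to $A^*$ in place of $A$. With that correction the proof goes through.
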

\begin{proof}
    Since $f\in\sqrt{I_k}$ there is an integer $N>0$ such that $f^{2^N}\in I_k$. Therefore
    $$P_{\mathcal{N}_k}S_f^{2^N}S_f^{*2^N}P_{\mathcal{N}_k}=0\in\mathcal{K}.$$
    The essential normality of $\mathcal{N}$ assures that $[S_f^*,S_f]$ is compact, and then we have
    $$P_{\mathcal{N}_k}(S_fS_f^*)^{2^{N-1}}P_{\mathcal{N}}(S_fS_f^*)^{2^{N-1}}P_{\mathcal{N}_k}=P_{\mathcal{N}_k}(S_fS_f^*)^{2^N}P_{\mathcal{N}_k}\in\mathcal{K},$$
    and therefore $P_{\mathcal{N}}(S_fS_f^*)^{2^{N-1}}P_{\mathcal{N}_k}$ is compact and so $P_{\mathcal{N}_k}(S_fS_f^*)^{2^{N-1}}P_{\mathcal{N}_k}$ is.

    By similar procedure we can recursively obtain the compactness of $P_{\mathcal{N}_k}(S_fS_f^*)^{2^{n}}P_{\mathcal{N}_k},n=N-2,\ldots,0$. Then the lemma follows from $P_{\mathcal{N}_k}S_fS_f^*P_{\mathcal{N}_k}\in\mathcal{K}$.
\end{proof}
The following lemma is a consequence of Lemma \ref{lem:basic_type}, and we admit the notations from its proof.
\begin{lem}\label{lem:prime}
    Let $u\in\partial\mathbb{D}^d\backslash\mathbb{T}^d$. Fix $j_0\in\Lambda^c$, and denote by $\Lambda_1=\{1,\ldots,d\}\backslash\{j_0\}$. Suppose $I\subset\poly$ is the prime ideal with variety $V_u$. Set $I_0=I\cap\mathbb{C}[z_{\Lambda_1}]$, then $I_0$ is the prime ideal of $\mathbb{C}[z_{\Lambda_1}]$ with variety $Z_{\mathbb{D}^{d-1}}(I_0)=\mathbb{D}u_{\Lambda_1}$ such that
    $$[I]^\bot=\big\{\sum_{n=0}^\infty z_{j_0}^n\bar{u}_{j_0}^n M_h^{*n}f_0(z_{\Lambda_1}):f_0\in\mathcal{N}_0\big\},$$
    where $\mathcal{N}_0=[I_0]^\bot$ is the quotient module of $H^2(\mathbb{D}^{\Lambda_1})$ associated to $I_0$. Moreover $S_h$ is essentially unitary on $[I]^\bot$.
\end{lem}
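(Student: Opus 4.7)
The lemma packages three statements; the plan is to dispatch them in order. The backbone of the argument is the membership $z_{j_0}-u_{j_0}h\in I$, which holds because at any point $\lambda u\in V_u$ one computes $h(\lambda u)=k^{-1}\sum_{i\in\Lambda}\bar u_i\lambda u_i=\lambda$ (using $|u_i|=1$ for $i\in\Lambda$), so $z_{j_0}-u_{j_0}h$ vanishes on the line $\mathbb{C}u=Z(I)$; primality of $I$ then forces it into $I$. For the first assertion I would identify $I_0$ as the kernel of the composition $\mathbb{C}[z_{\Lambda_1}]\hookrightarrow\poly\twoheadrightarrow\poly/I$; since $I$ is prime the target is a domain, so $I_0$ is prime. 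The variety identification follows from the fact that $\pi_{\Lambda_1}(Z(I))=\pi_{\Lambda_1}(\mathbb{C}u)=\mathbb{C}u_{\Lambda_1}$ is already Zariski closed, so $Z(I_0)=\mathbb{C}u_{\Lambda_1}$; intersecting with $\mathbb{D}^{d-1}$ gives $\mathbb{D}u_{\Lambda_1}$ because $u_{\Lambda_1}$ has a coordinate of modulus one (coming from $\Lambda\subset\Lambda_1$).

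For the series decomposition, $z_{j_0}-u_{j_0}h\in I$ yields $\langle f,(z_{j_0}-u_{j_0}h)q\rangle=0$ for all $f\in[I]^\perp$ and $q\in\poly$, which rearranges to $M_{z_{j_0}}^*f=\bar u_{j_0}M_h^*f$ on $[I]^\perp$. Expanding $f=\sum_{n\ge0}z_{j_0}^nf_n(z_{\Lambda_1})$ with $f_n\in H^2(\mathbb{D}^{\Lambda_1})$, and observing that $M_{z_{j_0}}^*$ is the backward shift in $z_{j_0}$ while $M_h^*$ acts coordinatewise on the $f_n$ (as $h\in\mathbb{C}[z_\Lambda]$ is free of $z_{j_0}$), comparing coefficients gives the recursion $f_{n+1}=\bar u_{j_0}M_h^*f_n$, whence $f_n=\bar u_{j_0}^n M_h^{*n}f_0$; testing against arbitrary $g\in I_0\subset I$ shows $f_0\in\mathcal{N}_0$. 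For the reverse inclusion I set $\phi(f_0):=\sum_{n\ge0}z_{j_0}^n\bar u_{j_0}^n M_h^{*n}f_0$, which converges in $H^2(\mathbb{D}^d)$ since $|u_{j_0}|<1$ and $\|M_h^*\|\le1$, and for $p\in I$ written as $\sum_k z_{j_0}^kp_k$ (a finite sum), I use the telescoping identity
$$z_{j_0}^k-(u_{j_0}h)^k=(z_{j_0}-u_{j_0}h)\sum_{\ell=0}^{k-1}z_{j_0}^\ell(u_{j_0}h)^{k-1-\ell}\in I$$
to deduce that $q:=\sum_k u_{j_0}^kh^kp_k$ lies in $I\cap\mathbb{C}[z_{\Lambda_1}]=I_0$; a direct inner product computation then yields $\langle\phi(f_0),p\rangle=\langle f_0,q\rangle=0$, proving $\phi(f_0)\in[I]^\perp$.

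The third claim, that $S_h$ is essentially unitary on $[I]^\perp$, requires no new argument: $I$ being prime is in particular primary with variety $V_u$, so Lemma \ref{lem:basic_type} applies directly and delivers essential unitarity of $S_h^*$, equivalently of $S_h$. I expect the principal obstacle to be the reverse inclusion in the series description, where one must carefully align the conjugation $\bar u_{j_0}$ appearing on the operator side with the unbarred $u_{j_0}$ in the polynomial identity; it is also the only step that genuinely uses primality (through the clean generator $z_{j_0}-u_{j_0}h\in I$), while the remaining steps amount to commutative-algebra bookkeeping or a direct invocation of Lemma \ref{lem:basic_type}.
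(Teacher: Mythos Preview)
Your argument is correct and tracks the paper's proof closely: both directions of the series description rest on the relation $M_{z_{j_0}}^*=\bar u_{j_0}M_h^*$ on $[I]^\perp$, and the essential unitarity of $S_h$ is obtained by invoking the earlier structural lemma (the paper cites Proposition~\ref{prop:basictype}, which is equivalent since a prime ideal is quasi-prime). The only cosmetic difference is in the reverse inclusion: the paper checks $M_g^*\phi(f_0)=0$ on the generating set $I_0\cup\{h_j:j\in\Lambda^c\}$ of $I$, whereas you substitute $z_{j_0}\mapsto u_{j_0}h$ to land directly in $I_0$---both routes are short and yield the same conclusion.
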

\begin{proof}
    If $f_0\in\mathcal{N}_0$, set
    $$f(z)=\sum_{n=0}^\infty z_{j_0}^n\bar{u}_{j_0}^n M_h^{*n}f_0(z_{\Lambda_1}).$$
    Since $|\bar{u}_{j_0}|<1$ we have $f\in\Hardy$. If $g\in I_0$, then since $M_g^*f_0=0$ we have $M_g^*f=0$. It is routine to verify that $M_{h_j}^*f=0$ for $h_j=z_j-u_jh,\forall j\in\Lambda^c$. Therefore $M_g^*f=0,\forall g\in I$ since $I$ is generated by $I_0$ and $\{h_j:j\in\Lambda^c\}$. Hence $f\in[I]^\bot$.

    Conversely, each $f\in[I]^\bot$ can be represented as
    $$f(z)=\sum_{n=0}^\infty z_{j_0}^nf_n(z),f_n\in H^2(\mathbb{D}^{\Lambda_1}).$$
    Since $(M_{z_{j_0}}^*-\bar{u}_{j_0}M_h^*)f=0$, we have
    $$\sum_{n=1}^\infty z_{j_0}^{n-1}f_n(z_{\Lambda_1})=\sum_{n=0}^\infty z_{j_0}^n\bar{u}_{j_0}M_h^*f_n(z_{\Lambda_1})$$
    and therefore $f_{n+1}=\bar{u}_{j_0}M_h^*f_n,\forall n\in\mathbb{Z}_+$. Hence $f_n=\bar{u}_{j_0}^nM_h^{*n}f_0$ and
    \begin{equation}\label{eq:prime}
        f(z)=\sum_{n=0}^\infty z_{j_0}^n\bar{u}_{j_0}^nM_h^{*n}f_0(z_{\Lambda_1}).
    \end{equation}
    For $g\in I_0$, we have $M_g^*f=0$ and consequently $M_g^*f_0=0$ by (\ref{eq:prime}), which ensures $f_0\in\mathcal{N}_0$.

    Since $I$ is prime, it follows directly that $I_0$ is prime. By Lemma \ref{lem:basic_type} $Z_{\mathbb{D}^{d-1}}(I_0)=\mathbb{D}u_{\Lambda_1}$, and by Proposition \ref{prop:basictype} $S_h$ is essentially unitary. The proof is completed.
\end{proof}
\begin{lem}\label{lem:bddbelow1}
    Let $u\in\partial\mathbb{D}^d\backslash\mathbb{T}^d$. Fix $j_0\in\Lambda^c$, and denote by $\Lambda_1=\{1,\ldots,d\}\backslash\{j_0\}$. Suppose $I\subset\poly$ is an ideal with variety $V_u$ such that $J_u\subset I$, and $h_j\in I,\forall j\in\Lambda^c\backslash\{j_0\}$, and $h_{j_0}^2\in I$. Set $I_0=\sqrt{I}\cap\mathbb{C}[z_{\Lambda_1}]$, then $I_0$ is the prime ideal of $\mathbb{C}[z_{\Lambda_1}]$ with variety $Z_{\mathbb{D}^{d-1}}(I_0)=\mathbb{D}u_{\Lambda_1}$ such that
    \begin{eqnarray}\label{eq:subset}[I]^\bot\subset\big\{\sum_{n=0}^\infty z_{j_0}^n\bar{u}_{j_0}^n M_h^{*n}f_0(z)+\sum_{n=1}^\infty nz_{j_0}^n\bar{u}_{j_0}^{n-1} M_h^{*(n-1)}g_0(z):f_0,g_0\in\mathcal{N}_0\big\},\end{eqnarray}
    where $\mathcal{N}_0=[I_0]^\bot$ is the quotient module of $H^2(\mathbb{D}^{\Lambda_1})$ associated to $I_0$.  Moreover $S_h$ is essentially unitary on $[I]^\bot$.
\end{lem}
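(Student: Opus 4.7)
My approach divides into three stages matching the three conclusions of the lemma.

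\textit{Stage 1: algebraic identification of $I_0$.} The key is to show $I \cap \mathbb{C}[z_{\Lambda_1}] = I_0$. One containment follows from $I \subset \sqrt{I}$. For the reverse, $I_0$ is the prime ideal of $\mathbb{C}[z_{\Lambda_1}]$ cutting out the line $\mathbb{D} u_{\Lambda_1}$, and is generated (as an ideal of $\mathbb{C}[z_{\Lambda_1}]$) by $J_u$ together with $\{h_j : j \in \Lambda^c \setminus \{j_0\}\}$, all of which lie in $I$ by hypothesis. Thus $I_0 \subseteq I$, and intersecting with $\mathbb{C}[z_{\Lambda_1}]$ gives the equality. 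The claims that $I_0$ is prime and that $Z_{\mathbb{D}^{d-1}}(I_0) = \mathbb{D} u_{\Lambda_1}$ then follow as in the proof of Lemma \ref{lem:prime} applied to the prime ideal $\sqrt{I}$.

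\textit{Stage 2: the series expansion.} Given $f \in [I]^\bot$, write $f = \sum_{n \geq 0} z_{j_0}^n f_n(z_{\Lambda_1})$. Since $I_0 \subseteq I$ by Stage 1 and no polynomial in $I_0$ depends on $z_{j_0}$, the relation $M_p^* f = 0$ for $p \in I_0$ descends to $M_p^* f_n = 0$, so $f_n \in \mathcal{N}_0$ for every $n$. Next, $h_{j_0}^2 \in I$ gives $(M_{z_{j_0}}^* - \bar{u}_{j_0} M_h^*)^2 f = 0$. Introducing $g_n := f_{n+1} - \bar{u}_{j_0} M_h^* f_n$ and comparing $z_{j_0}$-coefficients, the relation collapses to $g_{n+1} = \bar{u}_{j_0} M_h^* g_n$, hence $g_n = \bar{u}_{j_0}^n M_h^{*n} g_0$; solving the original recursion for $f_n$ yields $f_n = \bar{u}_{j_0}^n M_h^{*n} f_0 + n \bar{u}_{j_0}^{n-1} M_h^{*(n-1)} g_0$ for $n \geq 1$. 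The invariance $M_h^* \mathcal{N}_0 \subseteq \mathcal{N}_0$ (from $h \cdot I_0 \subseteq I_0$) then ensures $g_0 = f_1 - \bar{u}_{j_0} M_h^* f_0 \in \mathcal{N}_0$, proving the inclusion (\ref{eq:subset}).

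\textit{Stage 3: essential unitarity of $S_h$.} I would adapt the final portion of the proof of Lemma \ref{lem:basic_type}. The ideal $I_0$ is quasi-prime in $\mathbb{C}[z_{\Lambda_1}]$ (since $J_{u_{\Lambda_1}} = J_u$ is already contained in $I_0$), so Proposition \ref{prop:basictype} provides that $T := M_h^*|_{\mathcal{N}_0}$ is essentially unitary. I would define truncations $Q_n$ on $[I]^\bot$ corresponding to the $z_{j_0}$-expansion of Stage 2; the factor $|u_{j_0}|^n$ combined with $\|M_h^*\| \leq 1$ yields tail bounds going to zero uniformly on the unit ball, and together with the essential isometry of $T$ this makes $P_{[I]^\bot} - S_h S_h^*$ compact, so $S_h^*$ is essentially isometric. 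To upgrade to essential unitarity, take a homogeneous $f \in \ker S_h$: then $h f \in I$, and since $\sqrt{I} = I_u$ is prime with $h(u) \neq 0$, the colon ideal $(I : h)/I$ is supported only on embedded $\mathfrak{m}$-primary components of $I$ and hence is finite-dimensional. Therefore $\ker S_h$ is finite-dimensional; together with closed range from Lemma \ref{lem:closedrange} this yields Fredholmness and the essential unitarity of $S_h$.

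The principal difficulty is Stage 3: here $I$ is generally not essentially quasi-prime (typically $\dim(\sqrt{I}/I') = \infty$), so Proposition \ref{prop:basictype} cannot be invoked on $I$ directly. One must transfer essential unitarity from $T$ on $\mathcal{N}_0$ to $S_h$ on $[I]^\bot$ by hand, using the fine series structure of Stage 2 both to control the tails (for essential isometry) and to carry out the colon-ideal argument (for the finite-dimensional kernel bound).
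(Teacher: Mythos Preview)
Your Stages~1 and~2 are correct and match the paper's argument closely. The paper obtains the primality and variety of $I_0$ by citing Lemma~\ref{lem:prime} applied to $\sqrt{I}$, and then derives the same second-order recursion from $M_{h_{j_0}}^{*2}f=0$; your substitution $g_n=f_{n+1}-\bar u_{j_0}M_h^*f_n$ is a clean way to solve it. For $g_0\in\mathcal N_0$ the paper argues via $M_{h_{j_0}}^*f\in[I]^\bot$ rather than via $M_h^*$-invariance of $\mathcal N_0$, but both work.

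For Stage~3 you are working much harder than necessary. You correctly observe that $I$ is typically not essentially quasi-prime, so Proposition~\ref{prop:basictype} does not apply to $I$---but that is beside the point. Lemma~\ref{lem:basic_type} (not Proposition~\ref{prop:basictype}) already asserts that $S_h^*$ is essentially unitary on $[I]^\bot$ for \emph{any} homogeneous ideal with variety $V_u$, with no quasi-primeness hypothesis whatsoever. Accordingly the paper's proof of the present lemma dispatches Stage~3 in a single sentence: ``By Lemma~\ref{lem:basic_type} $S_h^*$ is essentially unitary.'' Your proposed transfer of essential unitarity from $\mathcal N_0$ up to $[I]^\bot$ via truncations is therefore redundant---it re-proves a special case of Lemma~\ref{lem:basic_type} from scratch.

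That said, your colon-ideal remark is not without value. The \emph{proof} of Lemma~\ref{lem:basic_type} invokes primariness of $I$ to conclude $\ker S_h=\{0\}$, while its \emph{statement} makes no such assumption. Your observation that $(I:h)/I$ is supported on the embedded $\mathfrak m$-primary part, hence finite-dimensional, shows $\dim\ker S_h<\infty$ in general, which is all Fredholmness requires. So your extra care patches a mild imprecision upstream---but it belongs in Lemma~\ref{lem:basic_type}, not here.
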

\begin{proof}
    By Lemma \ref{lem:prime}, $I_0$ is the prime ideal of $\mathbb{C}[z_{\Lambda_1}]$ associated to the variety $\mathbb{D}u_{\Lambda_1}$, and
    \begin{equation}\label{eq:bddbelow1_1}
        [\sqrt{I}]^\bot=\Big\{f\in\Hardy:f=\sum_{n=0}^\infty z_{j_0}^n\bar{u}_{j_0}^n M_h^{*n}f_0,f_0\in\mathcal{N}_0\Big\}.
    \end{equation}
    If $f\in[I]^\bot$, then $M_{h_{j_0}}^{*2}f=0$, namely
    \begin{equation}\label{eq:bddbelow1_2}
        M_{z_{j_0}}^{*2}f=2\bar{u}_dM_{z_{j_0}}^*M_h^*f-\bar{u}_d^2M_h^{*2}f.
    \end{equation}
    Applying (\ref{eq:bddbelow1_2}) to the expansion
    $$f(z)=\sum_{n=0}^\infty z_{j_0}^n f_n(z),f_n\in H^2(\mathbb{D}^{\Lambda_1}),$$
    and comparing the terms involving $z_{j_0}^n$, we find
    $$f_{n+2}=2\bar{u}_{j_0}M_h^*f_{n+1}-\bar{u}_{j_0}^2M_h^{*2}f_n,\forall n\in\mathbb{Z}_+.$$
    Thus $f$ is determined by $f_0$ and $f_1$, and we can solve that
    \begin{equation}\label{eq:bddbelow1_3}
        f=\sum_{n=0}^\infty z_{j_0}^n\bar{u}_{j_0}^n M_h^{*n}f_0+\sum_{n=1}^\infty nz_{j_0}^n\bar{u}_{j_0}^{n-1} M_h^{*(n-1)}g_0,
    \end{equation}
    where $g_0:=f_1-\bar{u}_{j_0}M_h^*f_0$. Clearly $I_0\subset I$, and therefore for any $g\in I_0$ we have $M_g^*f=0$. Then it follows from (\ref{eq:bddbelow1_3}) that
    $M_g^*f_0=0$, namely $f_0\in\mathcal{N}_0$. Then since
    $$M_{h_{j_0}}^*f=\sum_{n=0}^\infty z_{j_0}^n\bar{u}_{j_0}^n M_h^{*n}g_0\in[I]^\bot,$$
    we have $g\in\mathcal{N}_0$.

    By Lemma \ref{lem:basic_type} $S_h^*$ is essentially unitary. The proof of the lemma is completed.
\end{proof}
The following lemma plays the key role in finding the necessary condition for the essential normality.
\begin{lem}\label{lem:bddbelow2}
     Suppose the ideal $I\subset\poly$ satisfies the condition of Lemma \ref{lem:bddbelow1}. Then $M_{h_{j_0}}^*$ is essentially bounded from below on $[I]^\bot\ominus[\sqrt{I}]^\bot$.
\end{lem}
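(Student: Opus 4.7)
The plan is to exploit the explicit parametrization of $[I]^\bot$ from Lemma~\ref{lem:bddbelow1}: every $f\in[I]^\bot$ has a unique representation $f=\Phi_1(f_0)+\Phi_2(g_0)$ with $f_0,g_0\in\mathcal{N}_0$, where
\[
\Phi_1(f_0)=\sum_{n\geq 0}z_{j_0}^n\bar u_{j_0}^n T^n f_0, \qquad \Phi_2(g_0)=\sum_{n\geq 1}n\,z_{j_0}^n\bar u_{j_0}^{n-1}T^{n-1}g_0,
\]
and $T:=M_h^*|_{\mathcal{N}_0}$ is essentially unitary by Lemma~\ref{lem:basic_type}, with $\|T\|\leq 1$. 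Uniqueness follows by reading off $f_0$ and $g_0$ from the $z_{j_0}^0$- and $z_{j_0}^1$-coefficients of $f$. By Lemma~\ref{lem:prime}, $[\sqrt{I}]^\bot=\Phi_1(\mathcal{N}_0)$, so $[\sqrt{I}]^\bot$ corresponds exactly to $g_0=0$ in this parametrization.

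The first step is a direct algebraic computation --- a telescoping in the $z_{j_0}$-expansion --- yielding the key identity
\[
M_{h_{j_0}}^*f \;=\; \sum_{n\geq 0}z_{j_0}^n\bar u_{j_0}^n T^n g_0 \;=\; \Phi_1(g_0),
\]
which exhibits that $M_{h_{j_0}}^*f$ depends only on $g_0$. Pythagoras then gives $\|M_{h_{j_0}}^*f\|^2=\langle Ag_0,g_0\rangle$, where $A:=\sum_{n\geq 0}|u_{j_0}|^{2n}T^{*n}T^n$. Since the $n=0$ term of $A$ is the identity on $\mathcal{N}_0$, one has $A\geq I$, hence the lower bound $\|M_{h_{j_0}}^*f\|^2\geq\|g_0\|^2$.

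Next I would exploit the hypothesis $f\bot[\sqrt{I}]^\bot$, i.e.\ $\langle f,\Phi_1(f'_0)\rangle=0$ for all $f'_0\in\mathcal{N}_0$, to pin $f_0$ to $g_0$. Expanding the inner products shows this is equivalent to the single operator equation $Af_0+Bg_0=0$ on $\mathcal{N}_0$, where $B:=u_{j_0}\sum_{n\geq 1}n|u_{j_0}|^{2(n-1)}T^{*n}T^{n-1}$ is bounded. Invertibility of $A$ yields $f_0=-A^{-1}Bg_0$. Substituting this into
\[
\|f\|^2=\|\Phi_1(f_0)\|^2+2\Re\langle\Phi_1(f_0),\Phi_2(g_0)\rangle+\|\Phi_2(g_0)\|^2
\]
causes an exact cancellation between the first two terms, leaving $\|f\|^2=\|\Phi_2(g_0)\|^2-\|A^{-1/2}Bg_0\|^2\leq\|\Phi_2(g_0)\|^2$. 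Using $\|T\|\leq 1$ together with $\sum_{n\geq 1}n^2|u_{j_0}|^{2(n-1)}=(1+|u_{j_0}|^2)/(1-|u_{j_0}|^2)^3$ then bounds $\|f\|^2\leq C(|u_{j_0}|)\|g_0\|^2$.

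Putting the two norm estimates together gives $\|M_{h_{j_0}}^*f\|^2\geq C(|u_{j_0}|)^{-1}\|f\|^2$ for every $f\in[I]^\bot\ominus[\sqrt{I}]^\bot$, which is in fact a genuine lower bound --- no compact perturbation is required --- and in particular implies the claimed essential lower bound. The main obstacle is not computational but conceptual: it is recognizing that the orthogonality constraint forces the exact cancellation described above, which is precisely what allows $\|f\|^2$ to be estimated in terms of $\|g_0\|^2$ alone. Essential unitarity of $T$ enters only through the background structure (via Lemma~\ref{lem:basic_type}); the decisive input is the positivity $A\geq I$ produced by the $n=0$ term.
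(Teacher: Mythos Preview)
Your argument is correct and in fact cleaner than the paper's. Both proofs rest on the same parametrization $f=\Phi_1(f_0)+\Phi_2(g_0)$ and the identity $M_{h_{j_0}}^*f=\Phi_1(g_0)$, and both reduce to controlling $\|f\|$ in terms of $g_0$ alone. The paper does this asymptotically: it replaces $f_0$ by a specific preimage $f_*$ of $g_0$ under $T=M_h^*|_{\mathcal{N}_0}$, defines an auxiliary $Af\in[I]^\bot$ with $f-Af\in[\sqrt I]^\bot$, and then uses essential unitarity of $T$ together with weakly null sequences to compute $\lim\|S_{h_{j_0}}^*f^{(n)}\|/\|f^{(n)}\|=1-|u_{j_0}|^2$ exactly. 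You instead solve the orthogonality constraint $Af_0+Bg_0=0$ exactly via the invertibility of $A=\sum_{n\geq0}|u_{j_0}|^{2n}T^{*n}T^n\geq I$, obtain the clean identity $\|f\|^2=\|\Phi_2(g_0)\|^2-\|A^{-1/2}Bg_0\|^2$, and conclude a genuine (not merely essential) lower bound using only $\|T\|\leq 1$. The paper's route yields the sharp essential constant $1-|u_{j_0}|^2$; yours gives a slightly weaker constant but is more elementary, avoids the weak-limit machinery, and makes the role of the orthogonality constraint completely transparent.
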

\begin{proof}
    If $[I]^\bot\ominus[\sqrt{I}]^\bot$ is of finite dimension then the conclusion is trivial, and therefore we assume $\dim[I]^\bot\ominus[\sqrt{I}]^\bot=\infty$ in the rest of the proof. Since $h\notin I_0$, we have $$\ker P_{\mathcal{N}_0}M_h\mid_{\mathcal{N}_0}=\{0\}.$$ Then since $P_{\mathcal{N}_0}M_h\mid_{\mathcal{N}_0}$ is Fredholm, $\range M_h^*\mid_{\mathcal{N}_0}=\mathcal{N}_0$. By Lemma \ref{lem:bddbelow1} each $f\in[I]^\bot\ominus[\sqrt{I}]^\bot$ has the expansion
    $$f=\sum_{n=0}^\infty z_{j_0}^n\bar{u}_{j_0}^n M_h^{*n}f_0+\sum_{n=1}^\infty nz_{j_0}^n\bar{u}_{j_0}^{n-1} M_h^{*(n-1)}g_0\in[I]^\bot,f_0,g_0\in\mathcal{N}_0.$$
    There is a unique $f_*\in\mathcal{N}_0\ominus\ker(M_h^*\mid_{\mathcal{N}_0})$ such that $M_h^*f_*=g_0$. Define
    $$Af:=-\frac{u_{j_0}}{1-|u_{j_0}|^2}\sum_{n=0}^\infty z_{j_0}^n\bar{u}_{j_0}^n M_h^{*n}f_*+\sum_{n=1}^\infty nz_{j_0}^n\bar{u}_{j_0}^{n-1} M_h^{*(n-1)}g_0,$$
    then by Lemma \ref{lem:prime}, $f-Af\in[\sqrt{I}]^\bot$.

    Let $\{f^{(n)}\}$ and $\{h^{(n)}\}$ be sequences of unit vectors in $[I]^\bot\ominus[\sqrt{I}]^\bot$ and $[\sqrt{I}]^\bot$ respectively, that converge to $0$ in the weak topology. We find successively that $\{f_0^{(n)}\},\{g_0^{(n)}\},\{f_*^{(n)}\}$ converge to $0$ weakly. Since $\lim_{m\to\infty}Q_m=P_{[I]^\bot}$ we have
    \begin{eqnarray*}
        &&\lim_{n\to\infty}\langle Af^{(n)},h^{(n)}\rangle\\
        &=&\lim_{n\to\infty}\lim_{m\to\infty}\langle Af^{(n)},Q_mh^{(n)}\rangle\\
        &=&\lim_{m\to\infty}\lim_{n\to\infty}\sum_{l\leq m}\langle-\frac{u_{j_0}}{1-|u_{j_0}|^2}\bar{u}_{j_0}^n M_h^{*l}f_*^{(n)}+l\bar{u}_{j_0}^{l-1} M_h^{*(l-1)}g_0^{(n)},\bar{u}_{j_0}^l M_h^{*l}h_0^{(n)}\rangle\\
        &=&\lim_{m\to\infty}\lim_{n\to\infty}\sum_{l\leq m}(-\frac{u_{j_0}}{1-|u_{j_0}|^2}\langle\bar{u}_{j_0}^l f_*^{(n)},\bar{u}_{j_0}^l g_0^{(n)}\rangle+\langle l\bar{u}_{j_0}^{l-1}f_*^{(n)},\bar{u}_{j_0}^l h_0^{(n)}\rangle)\\
        &=&\lim_{m\to\infty}\lim_{n\to\infty}\sum_{l\leq m}(-\frac{u_{j_0}|\bar{u}_{j_0}|^{2l}}{1-|u_{j_0}|^2}+lu_{j_0}|\bar{u}_{j_0}^{l-1}|^2)\langle f_*^{(n)},h_0^{(n)}\rangle\\
        &=&0,
    \end{eqnarray*}
    where the last equality follows from
    $$\sum_{l=0}^\infty\frac{u_{j_0}|\bar{u}_{j_0}|^{2l}}{1-|u_{j_0}|^2}=\sum_{l=1}^\infty lu_{j_0}|\bar{u}_{j_0}|^{2l-2}=\frac{u_{j_0}}{1-|u_{j_0}|^2}\frac{1}{1-|u_{j_0}|^2}.$$
    As a consequence we have
    $$\lim_{n\to\infty}(f^{(n)}-Af^{(n)})=\lim_{n\to\infty}P_{[\sqrt{I}]^\bot}(f^{(n)}-Af^{(n)})=-\lim_{n\to\infty}P_{[\sqrt{I}]^\bot}Af^{(n)}=0.$$
    Therefore
    \begin{eqnarray*}
        &&\lim_{n\to\infty}||S_{h_{j_0}}^*f^{(n)}||^2\cdot||g_0^{(n)}||^{-2}\\
        &=&\lim_{n\to\infty}||S_{h_{j_0}}^*Af^{(n)}||^2\cdot||g_0^{(n)}||^{-2}\\
        &=&\lim_{n\to\infty}||\sum_{l=0}^\infty z_{j_0}^l\bar{u}_{j_0}^l M_h^{*l}g_0^{(n)}||^2\cdot||g_0^{(n)}||^{-2}\\
        &=&\sum_{l=0}^\infty|\bar{u}_{j_0}|^{2l}\\
        &=&\frac{1}{1-|u_{j_0}|^2},
    \end{eqnarray*}
    and
    \begin{eqnarray*}
        &&\lim_{n\to\infty}||f^{(n)}||^2\cdot||g_0^{(n)}||^{-2}\\
        &=&\lim_{n\to\infty}||Af^{(n)}||^2\cdot||f_*^{(n)}||^{-2}\\
        &=&\lim_{n\to\infty}||-\frac{u_{j_0}}{1-|u_{j_0}|^2}\sum_{l=0}^\infty z_{j_0}^l\bar{u}_{j_0}^l M_h^{*l}f_*^{(n)}+\sum_{l=1}^\infty lz_{j_0}^l\bar{u}_{j_0}^{l-1}M_h^{*l}f_*^{(n)}||^2\cdot||f_*^{(n)}||^{-2}\\
        &=&\sum_{l=1}^\infty|-\frac{|u_{j_0}|^2}{1-|u_{j_0}|^2}\bar{u}_{j_0}^{l-1}+l\bar{u}_{j_0}^{l-1}|^2+\frac{|u_{j_0}|^2}{(1-|u_{j_0}|^2)^2}\\
        &=&\sum_{l=0}^\infty |-\frac{|u_{j_0}|^2}{1-|u_{j_0}|^2}\bar{u}_{j_0}^l+(l+1)\bar{u}_{j_0}^l|^2+\frac{|u_{j_0}|^2}{(1-|u_{j_0}|^2)^2}\\
        &=&\sum_{l=0}^\infty \left[(l+1)(l+2)-(l+1)\frac{1+|u_{j_0}|^2}{1-|u_{j_0}|^2}+\frac{|u_{j_0}|^4}{(1-|u_{j_0}|^2)^2}\right]|u_{j_0}|^{2l}+\frac{|u_{j_0}|^2}{(1-|u_{j_0}|^2)^2}\\
        &=&\frac{2-(1+|u_{j_0}|^2)+|u_{j_0}^4|+|u_{j_0}|^2(1-|u_{j_0}|^2)}{(1-|u_{j_0}|^2)^3}\\
        &=&\frac{1}{(1-|u_{j_0}|^2)^3},
    \end{eqnarray*}
    inducing that
    $$\lim_{n\to\infty}\frac{||S_{h_{j_0}}^*f^{(n)}||}{||f^{(n)}||}=1-|u_{j_0}|^2.$$
    Thus $S_{h_{j_0}}^*$ is essentially bounded from below on $[I]^\bot\ominus[I']^\bot$.
\end{proof}
\begin{cor}\label{cor:bdb}
    Let $u\in\partial\mathbb{D}^d\backslash\mathbb{T}^d$. Define polynomials $h(z)=\sum_{i\in\Lambda}\bar{u}_iz_i$ and $h_j(z)=z_j-u_jh(z)$ for $j\in\Lambda^c$. Let $I\subset\poly$ be an ideal of variety $V_u$ such that $J_u\subset I$, and $h_ih_j\in I,\forall i,j\in\Lambda^c$. Then $\sum_{j\in\Lambda^c}S_{h_j}S_{h_j}^*$ is essentially bounded from below on $[I]^\bot\ominus[\sqrt{I}]^\bot$.
\end{cor}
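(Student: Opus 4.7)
The plan is to reduce the corollary to Lemma \ref{lem:bddbelow2} applied one direction at a time, via a two-step procedure: first replace $I$ by a smaller ideal having the same radical, then decompose the associated quotient module into summands on which Lemma \ref{lem:bddbelow2} applies directly.

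For the reduction, observe that for any $f \in [I]^\perp$, the hypothesis $h_ih_j \in I$ gives $M_{h_i}^* M_{h_j}^* f = 0$; together with $M_g^* f = 0$ for $g \in J_u \subset I$, this forces $M_{h_j}^* f \in [\sqrt{I}]^\perp \subset [I]^\perp$, so $S_{h_j}^* f = M_{h_j}^* f$. Hence the quadratic form $\sum_j \|S_{h_j}^* f\|^2 = \sum_j \|M_{h_j}^* f\|^2$ depends only on the ambient action of $M_{h_j}^*$ on $f \in \Hardy$. Setting $I_0 := J_u + (h_ih_j : i,j \in \Lambda^c) \subset I$ (so that $\sqrt{I_0} = \sqrt{I}$ and $[I]^\perp \subset [I_0]^\perp$), an essential lower bound for the corresponding form on $[I_0]^\perp \ominus [\sqrt{I}]^\perp$ automatically restricts to the required bound on $[I]^\perp \ominus [\sqrt{I}]^\perp$, so it suffices to prove the corollary under the assumption $I = I_0$.

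Assuming $I = I_0$, for each $j \in \Lambda^c$ define $I_j := I_0 + \sum_{k \neq j}(h_k) = J_u + (h_j^2) + \sum_{k \neq j}(h_k)$. This ideal satisfies the hypotheses of Lemma \ref{lem:bddbelow2} with $j_0 = j$ and has $\sqrt{I_j} = \sqrt{I_0}$, so the lemma gives that $M_{h_j}^*$ is essentially bounded below on $E_j := [I_j]^\perp \ominus [\sqrt{I_0}]^\perp$. The algebraic heart of the argument is the identity $I_0 = \bigcap_{j\in\Lambda^c} I_j$: since $(h_j)_{j\in\Lambda^c}^2 \subset I_0$ and $J_u \subset I_0$, the module $\sqrt{I_0}/I_0$ is annihilated by $\sqrt{I_0}$ and is therefore a finitely generated $\mathbb{C}[V_u]$-module with generators $\{\bar h_j\}_{j\in\Lambda^c}$; a degree argument (using that $I_0$ has no linear generators beyond those of $J_u$) shows this module is in fact free of rank $|\Lambda^c|$ on the $\bar h_j$'s, from which the intersection identity follows.

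Transferring to the Hilbert setting, I would use the explicit structural description of each $[I_j]^\perp$ from Lemma \ref{lem:bddbelow1}, together with asymptotic orthogonality techniques analogous to those of Corollary \ref{cor:ao}, to verify that $[I_0]^\perp \ominus [\sqrt{I_0}]^\perp = \sum_j E_j$ modulo compact perturbations, with the summands mutually asymptotically orthogonal. The individual lower bounds from Lemma \ref{lem:bddbelow2} then add to give $\sum_j S_{h_j}S_{h_j}^* \geq c P_{[I_0]^\perp \ominus [\sqrt{I_0}]^\perp}$ modulo compacts, which by the reduction step yields the desired bound on $[I]^\perp \ominus [\sqrt{I}]^\perp$. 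The main obstacle is this final analytic step—passing from the algebraic direct sum $I_0 = \bigcap_j I_j$ to a Hilbert-space orthogonal-modulo-compact decomposition of $[I_0]^\perp \ominus [\sqrt{I_0}]^\perp$. This requires a careful computation of the inner products between elements of distinct $E_j$'s along weak null sequences, using the explicit series expansions provided by Lemma \ref{lem:bddbelow1}.
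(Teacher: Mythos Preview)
Your strategy---form the auxiliary ideals $I_j$, apply Lemma \ref{lem:bddbelow2} to each, then combine---is exactly the paper's. Your preliminary reduction to $I_0=J_u+(h_ih_j:i,j\in\Lambda^c)$ is a clean extra step; the paper works directly with $I$ and asserts $I=\bigcap_jI_j$ without further comment, whereas for $I_0$ your freeness argument genuinely establishes the intersection identity.

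Where your plan diverges is the final combining step, and there you over-complicate. You do not need asymptotic orthogonality of the $E_j$'s, and the techniques of Corollary \ref{cor:ao} are the wrong tool anyway: they are tailored to \emph{distinct} varieties $V_u$, $V_v$, not to subspaces sitting over the same $V_u$. The paper instead exploits the trivial exact fact that $h_i\in I_j$ whenever $i\neq j$, so $S_{h_i}^*$ annihilates $[I_j]^\bot$ identically. Hence once a homogeneous $f\in[I_0]^\bot\ominus[\sqrt{I_0}]^\bot$ is decomposed degree-by-degree as $f=\sum_jf_j$ with $f_j\in E_j$ (possible precisely because $I_0=\bigcap_jI_j$), one has $S_{h_j}^*f=S_{h_j}^*f_j$ and therefore
\[
\sum_{j\in\Lambda^c}\|S_{h_j}^*f\|^2=\sum_{j\in\Lambda^c}\|S_{h_j}^*f_j\|^2.
\]
The exact asymptotic $\|S_{h_j}^*f_j\|/\|f_j\|\to 1-|u_j|^2$ computed inside the proof of Lemma \ref{lem:bddbelow2}, together with the elementary inequality $\|f\|^2\le|\Lambda^c|\sum_j\|f_j\|^2$, already yields an essential lower bound. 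No compact-perturbation decomposition of the quotient and no inner-product computations between distinct $E_j$'s are required.
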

\begin{proof}
    Without loss of generality we assume $\Lambda=\{1,\ldots,k\}$. The conclusion is trivial if $\dim[I]^\bot\ominus[\sqrt{I}]^\bot<\infty$, so we assume $\dim[I]^\bot\ominus[\sqrt{I}]^\bot=\infty$ in the remaining of the proof. For $j\in\Lambda^c$ denote by $I_j\subset\poly$ the ideal generated by $I$ and $h_{k+1},\ldots,\hat{h}_j,\ldots,h_d$, where $\hat{h}_j$ means $h_j$ being omitted. Then $I=\bigcap_{j\in\Lambda^c}I_j$.

    If homogeneous $f\in[I]^\bot\ominus[\sqrt{I}]^\bot$ is orthogonal to $\sum_{j\in\Lambda^c}[I_j]^\bot$, then $f$ belongs to each $I_j$ and therefore $f\in I$, inducing that $f=0$. Then we can find homogeneous $f_j\in[I_j]^\bot\ominus[\sqrt{I}]^\bot$ of the same degree as $f$ and such that $f=\sum_{j\in\Lambda^c}f_j$. Let $\{f^{(n)}\}$ be any sequence of nonzero homogeneous polynomials in $[I]^\bot\ominus[\sqrt{I}]^\bot$, such that $\lim_{n\to\infty}\deg(f_n)=\infty$. Then by the proof of the previous lemma we have
    \begin{eqnarray*}
        &&\liminf_{n\to\infty}\langle\sum_{j\in\Lambda^c}S_{h_j}S_{h_j}^*f^{(n)},f^{(n)}\rangle\cdot||f^{(n)}||^{-2}\\
        &=&\liminf_{n\to\infty}\langle\sum_{j\in\Lambda^c}S_{h_j}S_{h_j}^*f_j^{(n)},f_j^{(n)}\rangle\cdot||f^{(n)}||^{-2}\\
        &\geq&\liminf_{n\to\infty}\sum_{j\in\Lambda^c}(1-|u_j|^2)^2||f_j^{(n)}||^2\cdot||f^{(n)}||^{-2}\\
        &\geq&\min_{j\in\Lambda^c}(1-|u_j|^2)^2>0.
    \end{eqnarray*}
\end{proof}
\begin{cor}\label{cor:in-compact}
    Let $u\in\partial\mathbb{D}^d\backslash\mathbb{T}^d$, and $\Lambda\subset\{1,\ldots,d\}$ such that $u_i\in\mathbb{T}$ if $i\in\Lambda$, and $u_i\in\mathbb{D}$ if $i\in\Lambda^c$. Define polynomials $h(z)=\sum_{i\in\Lambda}\bar{u}_iz_i$ and $h_j(z)=z_j-u_jh(z)$ for $j\in\Lambda^c$. Let $I\subset\poly$ be an ideal of variety $V_u$ which is not essentially quasi-prime, then $\sum_{j\in\Lambda^c}S_{h_j}S_{h_j}^*$ cannot be compact.
\end{cor}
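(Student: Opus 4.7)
I will establish the contrapositive: if $T:=\sum_{j\in\Lambda^c}S_{h_j}S_{h_j}^*$ is compact on $[I]^\bot$, then $I$ is essentially quasi-prime. A first reduction observes that $T$ is a sum of positive operators, so $T$ is compact if and only if each $S_{h_j}S_{h_j}^*$ is, which by polar decomposition is equivalent to compactness of each $S_{h_j}$ on $[I]^\bot$ for $j\in\Lambda^c$.

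The second step passes to a larger ideal tailored to Corollary \ref{cor:bdb}. Put
\[
\tilde I:=I+J_u+(h_ih_j:i,j\in\Lambda^c),
\]
which still has zero variety $V_u$, contains $J_u$, and contains all pairwise products $h_ih_j$. Since $\tilde I\supset I$ we have $[\tilde I]^\bot\subset[I]^\bot$, and $[\tilde I]^\bot$ is invariant under every $M_{h_j}^*$; thus the restriction of $S^{I*}_{h_j}=M_{h_j}^*|_{[I]^\bot}$ to $[\tilde I]^\bot$ coincides with $S^{\tilde I*}_{h_j}$. Compactness of $S^I_{h_j}$ therefore transfers to compactness of $S^{\tilde I}_{h_j}$. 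Now Corollary \ref{cor:bdb} applies to $\tilde I$: the operator $\sum_{j\in\Lambda^c}S^{\tilde I}_{h_j}S^{\tilde I*}_{h_j}$ is essentially bounded below on $[\tilde I]^\bot\ominus[\sqrt I]^\bot$. Being simultaneously compact and essentially bounded below, this subspace is finite-dimensional.

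The finiteness of $[\tilde I]^\bot\ominus[\sqrt I]^\bot$ amounts to finite $\mathbb C$-dimensionality of $\sqrt I/\tilde I$. Using $\sqrt I=I'+(h_j:j\in\Lambda^c)$ one checks that $\tilde I=I'+\sqrt I^{\,2}$, so setting $R:=\poly/I'$ and $\bar P:=\sqrt I/I'$ (an ideal of $R$) we obtain that $\bar P/\bar P^{2}\cong\sqrt I/\tilde I$ is finite-dimensional. The final step upgrades this to finite-dimensionality of $\bar P$, i.e.\ to essential quasi-primeness of $I$. Since $V_u$ is one-dimensional, $R/\bar P\cong\mathbb C[\lambda]$, and $\bar P$ is the nilradical of $R$, so $\bar P^N=0$ for some $N$. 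The quotient $\bar P/\bar P^{2}$ is finitely generated as a $\mathbb C[\lambda]$-module (being a subquotient of the Noetherian $R$-module $\bar P$, on which $\bar P$ acts trivially); being finite-dimensional over $\mathbb C$ it is torsion, so some $f\in\mathbb C[\lambda]\setminus\{0\}$ satisfies $f\bar P\subset\bar P^{2}$. Induction on $k$ gives $f\bar P^{k}\subset\bar P^{k+1}$ for every $k\geq1$, hence each $\bar P^{k}/\bar P^{k+1}$ is annihilated by $f$ and is therefore a finitely generated $\mathbb C[\lambda]/(f)$-module of finite $\mathbb C$-dimension. Telescoping the finite filtration $\bar P\supset\bar P^{2}\supset\cdots\supset\bar P^{N}=0$ then yields $\bar P$ finite-dimensional.

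The main obstacle is the gap between the direct conclusion of Corollary \ref{cor:bdb} applied to $\tilde I$, which only controls $\bar P/\bar P^{2}$, and the target $\bar P$ itself; bridging this gap is the role of the Nakayama-style bootstrap in the last step, which hinges on one-dimensionality of $V_u$ (manifesting as $R/\bar P\cong\mathbb C[\lambda]$) together with nilpotence of $\bar P$.
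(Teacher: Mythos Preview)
Your argument is correct, and in fact both the paper and you apply Corollary \ref{cor:bdb} to the very same auxiliary ideal: the paper's $I_2=I'+J^2$ (with $J=(h_j:j\in\Lambda^c)$) coincides with your $\tilde I=I'+(h_ih_j)=I'+\sqrt{I}^{\,2}$. The difference lies in how the gap between $\sqrt I/\tilde I$ and $\sqrt I/I'$ is bridged. The paper argues directly: assuming $\dim\sqrt I/I'=\infty$, it builds a filtration $I'=I_N\subset\cdots\subset I_2\subset I_1=\sqrt I$ and, by hitting high-degree homogeneous elements of $[I']^\bot\ominus[\sqrt I]^\bot$ with suitable operators $S_{g_0}^*$ for $g_0\in J^{n-1}$, descends to produce homogeneous elements of arbitrarily high degree in $[I_2]^\bot\ominus[\sqrt I]^\bot$; Corollary \ref{cor:bdb} then blocks compactness there, and this transfers up to $[I]^\bot$. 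You instead run the contrapositive and replace the operator-theoretic descent by a Nakayama-type bootstrap in $R=\poly/I'$: finiteness of $\bar P/\bar P^2$ over $R/\bar P\cong\mathbb C[\lambda]$ forces torsion, hence each $\bar P^k/\bar P^{k+1}$ is finite-dimensional, and nilpotence of $\bar P$ finishes. Your route makes the role of the one-dimensionality of $V_u$ explicit (it is exactly what gives $R/\bar P\cong\mathbb C[\lambda]$, a PID) and cleanly separates the single analytic input from a purely algebraic amplification; the paper's route stays entirely inside the Hilbert-module framework and needs no commutative algebra beyond Noetherianity.
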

\begin{proof}
    Let $J\subset\poly$ be the ideal generated by $\{h_j:j\in\Lambda^c\}$. For positive integer $n$, denote by $I_n\subset\poly$ the ideal generated by $I'$ and $J^n$. Since $J\subset\sqrt{I'}$, there is a positive integer $N$ such that $J^N\subset I'$, and therefore $I'=I_N$. Since $\dim [I']^\bot\ominus[\sqrt{I}]^\bot=\sqrt{I}/I'=\infty$, for each integer $m>0$ we can find a homogeneous $f\in[I_N]^\bot\ominus[\sqrt{I}]^\bot$ of degree greater than $m+N$. Suppose $f\in[I_{n+1}]^\bot$ but $f\notin[I_n]^\bot$ where $1\leq n<N$. If $n=1$ then $f\in[I_2]^\bot\ominus[\sqrt{I}]^\bot$. If $n>1$ then there is some homogeneous $g_0\in J^{(n-1)}$ of degree $n-1$ such that $S_{g_0}^*f\notin[\sqrt{I}]^\bot$, since otherwise $S_g^*f=0$ for all $g\in J^n$, which contradicts to $f\notin[I_n]^\bot$. Then since $f\in[I_{n+1}]^\bot$, we have $S_g^*S_{g_0}^*f=0$ whenever $g\in J^2$, implying $S_{g_0}^*f\in[I_2]^\bot$. Therefore $S_{g_0}^*f-P_{[I']^\bot}S_{g_0}^*f$ is a non-vanishing element in $[I_2]^\bot\ominus[\sqrt{I}]^\bot$, of degree greater than $m+N-n+1>m$. Hence in either cases there is a sequence $\{f_m\}$ of unit vectors in $[I_2]^\bot\ominus[\sqrt{I}]^\bot$ that converges to $0$ weakly. Then by Corollary \ref{cor:bdb}, $P_{[I']^\bot}\sum_{j\in\Lambda^c}M_{h_j}M_{h_j}^*\mid_{[I']^\bot}$ cannot be compact, proving the conclusion of the corollary.
\end{proof}
Corollary \ref{cor:in-compact} together with Lemma \ref{lem:compactness} indicates that, Theorem \ref{thm:construction} actually gives the necessary and sufficient condition for a homogeneous quotient module to be essentially normal. We summarize these results in the following theorem, which is aforementioned as Theorem \ref{thm:iff}.
\begin{thm}
    Let $I\subset\poly$ be a homogeneous ideal for which $[I]^\bot$ is of infinite dimension. Then $\mathcal{N}=[I]^\bot$ is essentially normal if and only if the following items hold,
    \begin{itemize}
     \item[1)] $Z_{\mathbb{D}^d}(I)$ satisfies Condition A, and hence $Z_{\mathbb D^d}(I)=\bigcup\limits_i V_{u_i}$ for several different discs $V_{u_i}$,
     \item[2)] Let $I=\bigcap_{j=1}^m I_{u_j}$ be the primary decomposition with $Z_{\mathbb D^d}(I_{u_j})=V_{u_j}$, then each $I_{u_j}$ is essentially quasi-prime.
     \end{itemize}
\end{thm}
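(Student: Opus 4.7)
The plan is to derive both implications by combining the pieces already assembled earlier in the paper. The implication $(\Leftarrow)$ is exactly the content of Theorem \ref{thm:construction}: if Condition A holds and every primary component $I_{u_j}$ is essentially quasi-prime, then $[I]^\bot$ is essentially normal. It therefore remains to prove $(\Rightarrow)$, so throughout the rest of the plan I assume $\mathcal{N}=[I]^\bot$ is essentially normal and of infinite dimension.

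Necessity of item 1) is immediate from Proposition \ref{prop:variety_necessity}, which yields $Z_{\mathbb{D}^d}(I)=\bigcup_i V_{u_i}$ with the $V_{u_i}$ satisfying Condition A. With this in hand, fix any index $k$ and write $\mathcal{N}_k=[I_{u_k}]^\bot$. The goal reduces to showing that each $I_{u_k}$ is essentially quasi-prime. If $u_k\in\mathbb{T}^d$, then $\Lambda_k=\{1,\dots,d\}$, $J_{u_k}$ already coincides with the prime ideal of $V_{u_k}$, and so $I_{u_k}$ is automatically quasi-prime. The substantive case is $u_k\in\partial\mathbb{D}^d\setminus\mathbb{T}^d$, for which I introduce the polynomials $h(z)=\sum_{i\in\Lambda_k}\bar{u}_{k,i}z_i$ and $h_j(z)=z_j-u_{k,j}h(z)$ for $j\in\Lambda_k^c$, exactly as in Corollary \ref{cor:in-compact}.

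The central move is a transfer of compactness from $\mathcal{N}$ down to $\mathcal{N}_k$. Each $h_j$ vanishes on $V_{u_k}$, hence belongs to $\sqrt{I_{u_k}}$, so Lemma \ref{lem:compactness} gives the compactness of $S_{h_j}^*P_{\mathcal{N}_k}$ as an operator on $\mathcal{N}$. Since $\mathcal{N}$ and $\mathcal{N}_k$ are both $M_z^*$-invariant subspaces of $\Hardy$ with $\mathcal{N}_k\subset\mathcal{N}$, the restriction of $S_{h_j}^*$ to $\mathcal{N}_k$ coincides with $M_{h_j}^*|_{\mathcal{N}_k}$, which is precisely $\widetilde{S}_{h_j}^*$, where $\widetilde{S}_f=P_{\mathcal{N}_k}M_f|_{\mathcal{N}_k}$ denotes the compression onto $\mathcal{N}_k$ viewed as a quotient module of $\Hardy$. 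Consequently $\widetilde{S}_{h_j}^*$ is compact on $\mathcal{N}_k$, and hence so is the finite sum $\sum_{j\in\Lambda_k^c}\widetilde{S}_{h_j}\widetilde{S}_{h_j}^*$. Applying the contrapositive of Corollary \ref{cor:in-compact} to $\mathcal{N}_k$ forces $I_{u_k}$ to be essentially quasi-prime, completing the verification of item 2).

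The only subtle point is the transfer step, since Lemma \ref{lem:compactness} is formulated inside $\mathcal{N}$ while Corollary \ref{cor:in-compact} is a statement about the quotient module $\mathcal{N}_k$ of $\Hardy$. This gap is bridged cleanly by the $M_z^*$-invariance of both spaces, so no additional analysis is required; all remaining assertions of the theorem follow by assembling Proposition \ref{prop:variety_necessity}, Lemma \ref{lem:compactness}, Corollary \ref{cor:in-compact}, and Theorem \ref{thm:construction}.
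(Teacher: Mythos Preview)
Your proposal is correct and follows essentially the same route as the paper: sufficiency via Theorem \ref{thm:construction}, Condition A via Proposition \ref{prop:variety_necessity}, and the essentially quasi-prime property of each primary component via Lemma \ref{lem:compactness} combined with the contrapositive of Corollary \ref{cor:in-compact}. Your explicit handling of the distinguished case $u_k\in\mathbb{T}^d$ and the transfer step from $\mathcal{N}$ to $\mathcal{N}_k$ (using that both are $M_z^*$-invariant, so $S_{h_j}^*|_{\mathcal{N}_k}=M_{h_j}^*|_{\mathcal{N}_k}=\widetilde{S}_{h_j}^*$) are just the natural details that the paper leaves implicit when it says the theorem follows by combining these results.
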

Finally we give two examples to illustrate how the algebraic structure of the ideal determines the essential normality of its quotient module.
\begin{exam}
    Let $I\subset\mathbb{C}[z_1,z_2,z_3]$ be the ideal generated by $\{(z_1-z_2)^2,z_3(z_1+z_2),z_3^2\}$, then $Z_{\mathbb{D}^3}(I)=\{(z,z,0):z\in\mathbb{D}\}$. Clearly $I'$ is generated by $\{z_1-z_2,z_3z_1,z_3^2\}$, and $\sqrt{I}$ is generated by $\{z_1-z_2,z_3\}$. It is not hard to verify $\sqrt{I}=I'+\mathbb{C}z_3$, and therefore $\dim\sqrt{I}/I'=1$. Then $[I]^\bot$ is essentially normal by Theorem \ref{thm:construction}.
\end{exam}
\begin{exam}
    Let $I\subset\mathbb{C}[z_1,z_2,z_3]$ be the ideal generated by $\{(z_1-z_2)^2,z_3(z_1-z_2),z_3^2\}$, then $Z_{\mathbb{D}^3}(I)=\{(z,z,0):z\in\mathbb{D}\}$. Clearly $I'$ is the ideal generated by $\{z_1-z_2,z_3^2\}$, and $\sqrt{I}$ is the ideal generated by $\{z_1-z_2,z_3\}$. For natural number $n$ the polynomial $e_n(z)=z_3\sum_{i=0}^nz_1^iz_2^{n-i}$ belongs to $\sqrt{I}$, but does not lie in $I'$. Then $\sqrt{I}/I'$ cannot be of finite dimension, and by Theorem \ref{thm:iff} $[I]^\bot$ is not essentially normal.
\end{exam}
Although the forms of the ideals in these examples look similar, the essential normality of their quotient modules are totally different.

\section{K-homology for homogenous quotient module}
Let $I$ be a homogenous ideal such that $[I]^\perp$ is essentially normal, and $\sigma_e([I]^\perp)$ be the joint essential spectrum of the commuting tuple $(S_{z_d},\cdots, S_{z_d}^*)$. Similar to \cite[Proposition 2.5]{Ar3}, it is routine to verify that $C^*([I]^\perp)$ is irreducible.
Therefore if $[I]^\perp$ is essentially normal then $\cal K \subset C^*([I]^\perp)$, and we obtain the following short exact sequence
\begin{equation}\label{eq:extension}
    0\to \cal{K} \hookrightarrow C^*({[I]^\perp})\to C(\sigma_e([I]^\perp))\to 0,
\end{equation}
The $C^*-$algebra extension theory as well as the K-homology is based on this short exact sequence\cite{BDF}.
To investigate the K-homology, we first calculate the essential spectrum $\sigma_e([I]^\perp)$; then examine whether extension (\ref{eq:extension}) yields a non-trivial element. Similar to \cite{WZ}, we have the following lemma.
\begin{lem}
Suppose $u\in\mathbb\partial D^d$ and $I$ be a essentially quasi-prime ideal with variety $V_u$, then $\sigma_e([I]^\perp)=Z(I)\cap\partial \mathbb D^d$.
\end{lem}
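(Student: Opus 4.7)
The plan is to leverage Proposition \ref{prop:basictype} and Remark \ref{rem:Fredholmindex} to reduce the joint essential spectrum to the spectrum of a single essentially unitary operator, then identify both sides with $\mathbb{T}\cdot u$.

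First, I would invoke Proposition \ref{prop:basictype} to obtain an essentially unitary $S \in B([I]^\perp)$ and compact operators $K_i$ with $S_{z_i} = u_i S + K_i$ for $1 \le i \le d$. Passing to the Calkin algebra, this gives $\pi(S_{z_i}) = u_i\,\pi(S)$, so the commutative $C^*$-subalgebra of the Calkin algebra generated by $\{\pi(S_{z_1}),\ldots,\pi(S_{z_d})\}$ coincides with the one generated by the single element $\pi(S)$. Since $S$ is essentially unitary, $\pi(S)$ is a unitary element, and under the Gelfand transform of $C^*(\pi(S))$, the character that sends $\pi(S)$ to $\lambda$ sends $\pi(S_{z_i})$ to $u_i\lambda$. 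Hence
\begin{equation*}
\sigma_e([I]^\perp) \;=\; \{\,(u_1\lambda,\ldots,u_d\lambda)\;:\;\lambda\in\sigma_e(S)\,\}.
\end{equation*}

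Next, I need to show $\sigma_e(S) = \mathbb{T}$. Since $\pi(S)$ is unitary, $\sigma_e(S) \subset \mathbb{T}$. If this inclusion were proper, $\mathbb{C}\setminus\sigma_e(S)$ would be connected, and the Fredholm index of $S - \lambda$ would be constant on it; taking $|\lambda|$ large forces this constant to be $0$, which contradicts Remark \ref{rem:Fredholmindex} which asserts that $S$ has nonzero Fredholm index. Therefore $\sigma_e(S) = \mathbb{T}$ and consequently $\sigma_e([I]^\perp) = \mathbb{T}\cdot u$.

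Finally, I would identify $Z(I)\cap\partial\mathbb{D}^d$. Since $I$ is homogeneous with $Z_{\mathbb{D}^d}(I) = V_u = \mathbb{D}\cdot u$, the full zero variety is the complex line $Z(I) = \mathbb{C}\cdot u$. A point $\lambda u$ with $\lambda\in\mathbb{C}$ lies in $\overline{\mathbb{D}}^d$ iff $|\lambda|\max_i|u_i| \le 1$, i.e. $|\lambda|\le 1$, and lies in $\partial\mathbb{D}^d$ iff some coordinate has modulus $1$, i.e. $|\lambda| = 1$. Hence $Z(I)\cap\partial\mathbb{D}^d = \mathbb{T}\cdot u$, matching the essential spectrum computed above.

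There is no real obstacle; the only mildly delicate point is the index argument showing $\sigma_e(S) = \mathbb{T}$ rather than a proper closed subset, which is where Remark \ref{rem:Fredholmindex} is essential. Everything else is a routine consequence of the structure theorem in Proposition \ref{prop:basictype}.
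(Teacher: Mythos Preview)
Your proof is correct and takes a genuinely different route from the paper. The paper proves the two inclusions separately: for $Z(I)\cap\partial\mathbb{D}^d\subset\sigma_e([I]^\perp)$ it invokes Curto's criterion (Fredholmness of the tuple is equivalent to Fredholmness of $\sum_i(\lambda_i-S_{z_i})(\lambda_i-S_{z_i})^*$) and then tests this operator against the normalized reproducing kernels $k_{\mu_n}$ with $\mu_n\in V_u\cap\mathbb{D}^d$ converging to the boundary point; for the reverse inclusion it uses the Spectral Mapping Theorem to obtain $\sigma_e\subset Z(I)$, the bound $\|S_{z_i}\|\le 1$ to land in $\overline{\mathbb{D}}^d$, and the essential unitarity of one $S_{z_i}$ (from Lemma~\ref{lem:basic_type}) to rule out interior points. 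Your approach instead exploits the full strength of Proposition~\ref{prop:basictype} to collapse the tuple to a single essentially unitary generator in the Calkin algebra, then pins down $\sigma_e(S)=\mathbb{T}$ by the connectivity-of-complement/constancy-of-index argument coupled with Remark~\ref{rem:Fredholmindex}. What your approach buys is brevity and an entirely algebraic computation once the structure theorem is in hand; what the paper's approach buys is a more self-contained argument that uses only the essential unitarity of a single coordinate operator and the reproducing kernel machinery already set up in Section~2, without appealing to the identification $S=S_h$ inside the proof of Proposition~\ref{prop:basictype} (which your use of Remark~\ref{rem:Fredholmindex} tacitly relies on).
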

\begin{proof} By Theorem \ref{thm:iff}, $\lambda_i-S_{z_i}(i=1,\ldots,d)$ are essentially normal. By \cite[Corollary 3.9]{Cur}, the tuple $(\lambda_1-S_{z_1},\ldots,\lambda_d-S_{z_d})$ is Fredholm if and only if $\sum\limits_{i=1}^{d}(\lambda_i-S_{z_i})(\lambda_i-S_{z_i})^*$ is Fredholm. First we prove the assertion $$Z(I)\cap\partial \mathbb D^d\subset \sigma_e([I]^\perp).$$
Otherwise, there is some $\underline{\lambda}=(\lambda_1,\ldots,\lambda_d)\in Z(I)\cap\partial \mathbb D^d$ making $T=\sum\limits_{i=1}^d (\lambda_i-S_{z_i})(\lambda_i-S_{z_i})^*$ Fredholm. Since $T$ is positive, there is an invertible positive operator $B$ and a compact operator $K$ such that $T=B+K$. Take a sequence $\{\underline{\mu}_n\}$ in $V_u\cap \mathbb D^d$ that converges to $\underline{\lambda}$ as $n\to\infty$. Since $\{k_{\underline{\mu}_n}\}$ converges to $0$ weakly, there is a positive number $c$ such that
\begin{eqnarray*}
\lim\limits_{n\to\infty}\langle T k_{\underline{\mu}_n}, k_{\underline{\mu}_n}\rangle=\lim\limits_{n\to\infty}\langle(B+K)k_{\underline{\mu}_n}, k_{\underline{\mu}_n}\rangle=\lim\limits_{n\to\infty}\langle Bk_{\underline{\mu}_n}, k_{\underline{\mu}_n}\rangle\geq c.
\end{eqnarray*}
However, since $\underline{\mu}_n\in V_u$, $k_{\underline {\mu}_n}\in [I]^\perp$ and it holds that
\begin{eqnarray}
\lim\limits_{n\to\infty}\langle T k_{\underline {\mu}_n},k_{\underline {\mu}_n}\rangle=\lim\limits_{n\to\infty} |\underline{\lambda}-\underline{\mu}_n|^2=0,
\end{eqnarray}
contradicting to the previous inequality. Hence the assertion is proved.

Conversely, $f(S_{z_1},\ldots, S_{z_d})=0$ for each $f\in I$, and then the Spectral Mapping Theorem ensures $\sigma_e([I]^\perp)\subseteq Z(f)$. It follows that $\sigma_e([I]^\perp)\subseteq Z(I).$ Then since $\|S_{z_i}\|\leq 1$ for each $i$, we have $\sigma_e([I]^\perp)\subseteq\overline{\mathbb D}^d$. By Lemma \ref{lem:basic_type}, there is some $i\in\Lambda_u$ such that $S_{z_i}$ is essentially unitary, which implies $\sigma_e(S_{z_i})\subset \mathbb T$. Then for each $(\lambda_1,\cdots, \lambda_d)\in\mathbb D^d$, the tuple $(\lambda_1-S_{z_1},\cdots,\lambda_d-S_{z_d})$ is Fredholm and therefore $\sigma_e([I]^\perp)\subset \partial\mathbb D^d$. The proof of the lemma is completed.
\end{proof}
Now let $I$ be an arbitrary homogenous ideal such that $[I]^\perp$ is essentially normal, then $Z(I)$ satisfies Condition A by Theorem \ref{thm:iff}. Let $Z_{\mathbb D^d}(I)=\bigcup\limits_i V_i$ for different discs $V_i$, and $I=\bigcap_i I_{i}$ be the primary decomposition with $Z(I_i)\cap\mathbb D^d=V_i$. Then by Corollary \ref{cor:ao}, $[I_i]^\perp$ is asymptotically orthogonal to $[I_j]^\perp$ whenever $i\not=j$. It follows that
$$\sigma_e([I]^\perp)=\bigcup_i \sigma_e([I_i]^\perp)=Z(I)\cap \partial\mathbb D^d.$$
We summarize this in the following proposition.
\begin{prop}
Let $I$ be a homogenous ideal such that $[I]^\perp$ is essentially normal, then  $$\sigma_e([I]^\perp)=Z(I)\cap \partial\mathbb D^d.$$
\end{prop}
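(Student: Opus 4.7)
The plan is to upgrade the preceding lemma, which handles a single essentially quasi-prime ideal, to the general essentially normal case via the primary decomposition provided by Theorem \ref{thm:iff}, coupled with the asymptotic orthogonality between the pieces (Corollary \ref{cor:ao}). Write $I=\bigcap_{j=1}^m I_{u_j}$, set $H_j=[I_{u_j}]^\perp\subset\mathcal N=[I]^\perp$, and note that each $I_{u_j}$ is essentially quasi-prime, so the previous lemma gives $\sigma_e(H_j)=Z(I_{u_j})\cap\partial\mathbb D^d$.

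For the inclusion $Z(I)\cap\partial\mathbb D^d\subset\sigma_e(\mathcal N)$, I would recycle the reproducing-kernel obstruction from the previous lemma. Any $\lambda\in Z(I)\cap\partial\mathbb D^d$ lies in some $V_{u_j}\cap\partial\mathbb D^d$, so there is a sequence $\mu_n\in V_{u_j}\cap\mathbb D^d$ with $\mu_n\to\lambda$. The normalized kernels $k_{\mu_n}$ belong to $\mathcal N$ (they vanish on $I$), they tend weakly to $0$, and
$$\langle T_\lambda k_{\mu_n},k_{\mu_n}\rangle=\sum_{i=1}^d|\lambda_i-\mu_{n,i}|^2\longrightarrow 0,$$
where $T_\lambda=\sum_i(\lambda_i-S_{z_i})(\lambda_i-S_{z_i})^*$. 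Writing a hypothetical $T_\lambda=B+K$ with $B$ positive invertible and $K$ compact produces a contradiction, so by Curto's criterion the tuple $(\lambda_i-S_{z_i})$ fails to be Fredholm and $\lambda\in\sigma_e(\mathcal N)$.

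For the reverse inclusion I would pass to the Calkin algebra. Spectral mapping applied to any $f\in I$ gives $\sigma_e(\mathcal N)\subset Z(I)$, and the obvious norm bound gives $\sigma_e(\mathcal N)\subset\overline{\mathbb D}^d$. It remains to rule out $\lambda\in Z(I)\cap\mathbb D^d$. Using Lemma \ref{lem:asum} inductively on the $m$ pairwise asymptotically orthogonal subspaces $H_1,\dots,H_m$ yields $P_\mathcal N\equiv\sum_j P_{H_j}\pmod{\mathcal K}$, so the inclusion map $V:\bigoplus_j H_j\to\mathcal N$, $V(h_1,\dots,h_m)=\sum_j h_j$, is essentially unitary. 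Under this conjugation I would show that modulo compacts the operator $V^*S_{z_i}V$ becomes block diagonal with diagonal entries $S_{z_i}^{(j)}=P_{H_j}M_{z_i}|_{H_j}$: the off-diagonal entries $P_{H_j}M_{z_i}P_{H_k}$ ($j\neq k$) become compact because, by Proposition \ref{prop:basictype}, $M_{z_i}^*$ acts on $H_k$ as $\bar u_{k,i}S^{(k)*}$ modulo compacts with $S^{(k)}$ essentially unitary, so $P_{H_j}M_{z_i}P_{H_k}\equiv\bar u_{k,i}^{-1}P_{H_j}P_{H_k}(\text{unitary})\pmod{\mathcal K}$ (pulling the unitary through and using $\pi(P_{H_j}P_{H_k})=0$). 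Thus the joint essential spectrum of $(S_{z_i})$ on $\mathcal N$ equals the union of the joint essential spectra of the $(S_{z_i}^{(j)})$ on the $H_j$, which is $\bigcup_j Z(I_{u_j})\cap\partial\mathbb D^d=Z(I)\cap\partial\mathbb D^d$.

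The main obstacle is the last step, namely checking rigorously that the off-diagonal blocks $P_{H_j}M_{z_i}P_{H_k}$ are compact when $j\neq k$. The idea above reduces this to combining asymptotic orthogonality with the ``essentially unitary coordinate'' structure from Proposition \ref{prop:basictype}, but one has to argue separately for indices $i\in\Lambda_k$ (where $S_{z_i}$ is essentially unitary on $H_k$) and $i\in\Lambda_k^c$ (where $S_{z_i}|_{H_k}$ is essentially $u_{k,i}\cdot S^{(k)}$ plus a compact, by the same proposition); in either case $M_{z_i}P_{H_k}$ differs modulo $\mathcal K$ from a multiple of $P_{H_k}$ composed with an essentially unitary operator, after which the asymptotic orthogonality $\pi(P_{H_j}P_{H_k})=0$ collapses the off-diagonal term. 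Once this is in place, the rest of the argument is a routine matrix-Fredholmness manipulation.
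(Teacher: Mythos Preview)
Your approach is correct and matches the paper's: the paper also reduces to the single-primary lemma via the primary decomposition from Theorem \ref{thm:iff} and the asymptotic orthogonality of Corollary \ref{cor:ao}, concluding in one line that $\sigma_e([I]^\perp)=\bigcup_j\sigma_e([I_{u_j}]^\perp)=Z(I)\cap\partial\mathbb D^d$. Your ``main obstacle'' dissolves with a simpler observation than the one you sketch: each $H_j$ is a quotient module and hence $M_{z_i}^*$-invariant, so for $j\neq k$ one has $(P_{H_j}M_{z_i}P_{H_k})^*=P_{H_k}M_{z_i}^*P_{H_j}=P_{H_k}P_{H_j}(M_{z_i}^*|_{H_j})$, which is compact by asymptotic orthogonality alone---no appeal to Proposition \ref{prop:basictype} or case-splitting on $\Lambda_k$ is needed.
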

At the end of this paper, we prove the non-triviality of extension (\ref{eq:extension}).
\begin{thm}
Let $I$ be a homogenous ideal such that $[I]^\perp$ is essentially normal, then the short exact sequence
\begin{eqnarray*}
0\to \cal{K} \hookrightarrow C^*({[I]^\perp})\to C(Z(I)\cap \partial\mathbb D^d)\to 0
\end{eqnarray*}
is not split.
\end{thm}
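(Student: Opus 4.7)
The plan is to exhibit a Fredholm operator $S_h\in C^*([I]^\perp)$ whose symbol $\pi(S_h)$ is invertible in $C(Z(I)\cap\partial\mathbb D^d)$ but whose Fredholm index is nonzero. Any $*$-homomorphism section $s$ of the extension would force $s(\pi(S_h))$ to be invertible in $C^*([I]^\perp)$ (hence Fredholm of index $0$), and since $S_h-s(\pi(S_h))\in\ker\pi=\mathcal K$, this would yield $\mathrm{ind}(S_h)=0$, contradicting the construction.

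By the preceding proposition, $Z(I)\cap\partial\mathbb D^d=\bigsqcup_{k=1}^{m}(V_{u_k}\cap\partial\mathbb D^d)$ is a disjoint union of circles. For each $k$ the condition $\sum_{i=1}^{d}c_iu_{k,i}=0$ cuts out a proper hyperplane in $\mathbb C^d$, so I can choose $(c_i)$ avoiding all $m$ of them, producing a linear form $h(z)=\sum_i c_iz_i$ with $h(u_k)\neq 0$ for every $k$; equivalently, $h\notin\sqrt{I_{u_k}}$ for every $k$. Since $h(\lambda u_k)=\lambda h(u_k)$ is non-vanishing on $V_{u_k}\cap\partial\mathbb D^d$, the symbol $\pi(S_h)=h|_{Z(I)\cap\partial\mathbb D^d}$ is invertible in $C(Z(I)\cap\partial\mathbb D^d)$.

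To evaluate $\mathrm{ind}(S_h)$ I would invoke the asymptotic-orthogonality structure of Section 3 (Corollary \ref{cor:ao} together with the lemmas leading to it and Lemma \ref{lem:asum}): the projections $P_{[I_{u_k}]^\perp}$ are pairwise asymptotically orthogonal, the residue $P_{[I]^\perp}-\sum_k P_{[I_{u_k}]^\perp}$ is compact, and $S_h$ commutes with each $P_{[I_{u_k}]^\perp}$ modulo $\mathcal K$ (a consequence of the estimates $P_{[I_{u_j}]^\perp}M_{z_i}P_{[I_{u_k}]^\perp}\in\mathcal K$ for $j\neq k$). Hence modulo compacts $S_h\equiv\bigoplus_k S_h^{(k)}$, where $S_h^{(k)}$ is the action on $[I_{u_k}]^\perp$. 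By Proposition \ref{prop:basictype}, $S_h^{(k)}=h(u_k)S^{(k)}+(\text{compact})$ with $S^{(k)}$ essentially unitary, so Remark \ref{rem:Fredholmindex} gives $\mathrm{ind}(S_h^{(k)})=\mathrm{ind}(S^{(k)})<0$. Summing,
$$\mathrm{ind}(S_h)=\sum_{k=1}^{m}\mathrm{ind}(S_h^{(k)})<0,$$
so $S_h$ is Fredholm of nonzero index, completing the contradiction.

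I expect the main technical step to be the additivity of the Fredholm index along this asymptotic direct-sum decomposition, that is, the joint compactness of $[S_h,P_{[I_{u_k}]^\perp}]$ and of $P_{[I]^\perp}-\sum_k P_{[I_{u_k}]^\perp}$. Both ultimately reduce to the pairwise asymptotic orthogonality between $[I_{u_j}]^\perp$ and $[I_{u_k}]^\perp$ established in Corollary \ref{cor:ao} and its supporting lemmas, combined with Condition A guaranteeing that distinct primary components interact only through compact operators.
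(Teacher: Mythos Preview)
Your argument is correct and rests on the same two ingredients as the paper's proof: the asymptotic orthogonality of the primary components (Corollary~\ref{cor:ao}, Lemma~\ref{lem:asum}) and the nonzero Fredholm index on each component supplied by Remark~\ref{rem:Fredholmindex}. The organization differs slightly. The paper first reduces, via the asymptotic orthogonality, to showing that the extension for a \emph{single} component $[I_{u_j}]^\perp$ does not split, and then exhibits $S_{z_k}$ for some $k\in\Lambda_{u_j}$ as an essentially unitary operator of nonzero index there, citing \cite[Lemma~5.5]{GWk1}. You instead choose one global linear form $h$ with $h(u_k)\neq0$ for every $k$ and use the decomposition to obtain $\mathrm{ind}(S_h)=\sum_k\mathrm{ind}(S_h^{(k)})$; the extra point you need---that all component indices carry the \emph{same} sign so cannot cancel---is precisely what the proof of Remark~\ref{rem:Fredholmindex} gives (each $S^{(k)}$ has trivial kernel while $1\in\ker S^{(k)*}$). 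Your route is a touch more explicit in producing a single witness operator; the paper's is a touch more modular in that it only needs one component to be nontrivial. The cross-term compactness $P_{[I_{u_j}]^\perp}M_{z_i}P_{[I_{u_k}]^\perp}\in\mathcal K$ that you flag is immediate: its adjoint equals $P_{[I_{u_k}]^\perp}M_{z_i}^*P_{[I_{u_j}]^\perp}=P_{[I_{u_k}]^\perp}P_{[I_{u_j}]^\perp}M_{z_i}^*P_{[I_{u_j}]^\perp}$ by co-invariance of quotient modules, and this is compact by asymptotic orthogonality.
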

\begin{proof}
Suppose $Z_{\mathbb{D}^d}(I)=\bigcup_{i}V_{u_i}$ for different discs $\{V_{u_i}\}$. Let $I=\bigcap_i I_{u_i}$ be the primary decomposition such that $V_{u_i}=Z(I_{u_i})\cap\mathbb{D}^d$. Since $I_{u_i}^\prime/I_{u_i}$ is of finite dimension, and $I_{u_i}$ is asymptotically orthogonal to $I_{u_j}$ whenever $i\neq j$, it suffices to show that the short exact sequence
\begin{eqnarray}
0\to \cal{K} \hookrightarrow C^*({[I_{u_j}]^\perp})\to C(\partial V_{u_j})\to 0
\end{eqnarray}
is not split. By \cite[Lemma 5.5]{GWk1}, it is enough to find a Fredholm operator in $C^*({[I_{u_j}]^\perp})$ with nonzero Fredholm index.  Fix a $k\in\Lambda_{u_j}$, then $S_{z_k}^*\mid_{[I_{u_j}]^\perp}$ is essentially unitary. Moreover by Remark \ref{rem:Fredholmindex}, $\Ind(S_{z_k}^*\mid_{[I_{u_j}]^\perp})\not=0$, completing the proof.
\end{proof}

\vskip2.5mm\noindent {\bf Acknowledgements.} The authors would like to thank Yanyue Shi from Ocean University of China, whose critical example of essentially normal quotient module stimulated us to improve the concept of quasi-prime ideals. We thank Professor Kunyu Guo for his encouragements and interests. We also thank Li Chen for his comments on the earlier version of this manuscript.

\end{document}